\newcommand\br{\begin{remark}}
\newcommand\er{\end{remark}}
\newcommand\bp{\begin{pmatrix}}
\newcommand\ep{\end{pmatrix}}
\newcommand{\be}{\begin{equation}}
\newcommand{\ee}{\end{equation}}
\newcommand{\ba}[1]{\begin{array}{#1}}
\newcommand{\ea}{\end{array}}
\DeclareMathOperator{\beD}{\mathbf{e}}
\DeclareMathOperator\dD{d}
\DeclareMathOperator{\Tr}{Tr}
\newcommand{\mypar}{{\mkern3mu\vphantom{\perp}\vrule depth 0pt\mkern2mu\vrule depth 0pt\mkern3mu}}
\newcommand\eDr{\beD_r}
\newcommand\eDt{\beD_\theta}
\newcommand\eDp{\beD_\varphi}
\newcommand\eDpar{\beD_\mypar}
\newcommand\eDperp{\beD_\perp}
\newcommand\vpar{v_\mypar}
\newcommand\bvperp{\bv_\perp}
\newcommand\buperp{\bu_\perp}
\newcommand\vr{v_r}
\newcommand\vperp{v_\perp}
\newcommand\uperp{u_\perp}
\newcommand\Epar{E_\mypar}
\newcommand\bEperp{\bE_\perp}
\newcommand\bGperp{\bG_\perp}
\newcommand\Fpar{F_\mypar}
\newcommand\bCperp{\bC_\perp}
\newcommand\Eperp{E_\perp}
\newcommand\Gperp{G_\perp}
\newcommand\Cr{C_r}
\newcommand\Gr{G_r}
\newcommand\Cperp{C_\perp}
\newcommand\Er{E_r}
\newcommand\bUperp{\bU_\perp}
\newcommand\Uperp{U_\perp}
\newcommand\Ur{U_r}
\newcommand\cL{{\mathcal L}}
\newcommand\cO{{\mathcal O}}
\newtheorem{theorem}{Theorem}[section]
\newtheorem{proposition}[theorem]{Proposition}
\newtheorem{remark}[theorem]{Remark}
\newcommand\CC{\hbox{C\kern -.58em {\raise .54ex \hbox{$\scriptscriptstyle |$}}\kern-.55em {\raise .53ex \hbox{$\scriptscriptstyle |$}}}}
\newcommand\NN{\hbox{I\kern-.2em\hbox{N}}}
\newcommand\RR{\mathbb{R}}
\newcommand\ZZ{\mathbb{Z}}
\newcommand\Div{ \textrm{div}}
\newcommand\ds{ \displaystyle }
\renewcommand\d{\partial}
\newcommand{\Id}{{\rm Id}}
\newcommand\bA{{\mathbf A}}
\newcommand\bB{{\mathbf B}}
\newcommand\bC{{\mathbf C}}
\newcommand\bE{{\mathbf E}}
\newcommand\bF{{\mathbf F}}
\newcommand\bG{{\mathbf G}}
\newcommand\bJ{{\mathbf J}}
\newcommand\bL{{\mathbf L}}
\newcommand\bR{{\mathbf R}}
\newcommand\bU{{\mathbf U}}
\newcommand\bV{{\mathbf V}}
\newcommand\bX{{\mathbf X}}
\newcommand\bY{{\mathbf Y}}
\newcommand\bZ{{\mathbf Z}}
\newcommand\bu{{\mathbf u}}
\newcommand\bv{{\mathbf v}}
\newcommand\bx{{\mathbf x}}
\newcommand{\EcB}{\bU_{\bE\times\bB}}
\newcommand{\gradB}{\bU_{\nabla b\times\bB}}
\newcommand{\curvB}{\bU_{\rm curv}}
\def\eps{\varepsilon }
\def\signFF{\bigskip\bigskip\hspace{80mm}
\vbox{{\sc Francis Filbet\par\vspace{3mm}
Universit\'e de Toulouse III \par
UMR5219, IMT,\par
118, route de Narbonne\par
F-31062 Toulouse cedex,  FRANCE
\par\vspace{3mm}e-mail:} francis.filbet@math.univ-toulouse.fr }}
\def\signLMR{\bigskip\bigskip\hspace{80mm}
\vbox{{\sc  Luis Miguel Rodrigues \par\vspace{3mm}
Univ Rennes \& IUF,\par
CNRS, IRMAR - UMR 6625,\par
F-35000 Rennes, FRANCE
\par\vspace{3mm}e-mail:}  luis-miguel.rodrigues@univ-rennes1.fr }}
\begin{document}

\title[Particle methods for  strongly magnetized
plasmas]{Asymptotically preserving particle methods for strongly magnetized
plasmas in a torus}

\author{Francis Filbet and Luis Miguel Rodrigues}

\maketitle

\begin{abstract}
We propose and analyze a class of particle methods for the Vlasov equation with a strong  external magnetic field in a torus configuration. In this regime, the time step can
be subject to stability constraints related to the smallness of Larmor radius. To avoid this limitation, our approach is based on higher-order semi-implicit numerical
schemes already validated on dissipative systems \cite{BFR:15} and 
for 
 magnetic fields pointing in a fixed direction \cite{FR1, FR2, FR3}.
It hinges on asymptotic insights gained in \cite{FR:JEP} at the continuous level. 
Thus, when the magnitude of the external magnetic field is large, this scheme provides a consistent approximation of the guiding-center system taking into account curvature and variation of the magnetic field.  Finally, we carry out a theoretical proof of consistency and perform several numerical experiments that establish a solid validation of the method and its underlying concepts.
 \end{abstract}

\vspace{0.1cm}

\noindent 
{\small\sc Keywords.}  {\small High-order time discretization;
  Vlasov  equation; Strong magnetic field; Particle methods.}

\tableofcontents

\section{Introduction} 
\setcounter{equation}{0}
\label{sec:1}
The main concern of the present paper is the study of plasma  confined by a strong external nonconstant magnetic
field, where the charged particles evolve under an electrostatic  and
intense confining magnetic field.  This configuration is typical of a
tokamak plasma \cite{bellan_2006_fundamentals, miyamoto_2006_plasma}
where the magnetic field is used to confine particles inside the core
of the device. Kinetic models, based on a mesoscopic description of the various
particles constituting a plasma, and coupled to Maxwell's equations
for the computation of the electromagnetic fields, are very precise
approaches for the study of such thermonuclear fusion
plasmas Here we suppose that collective effects are
dominant and the plasma is entirely modelled with transport
equations, where the unknown is the number density of particles $f\equiv
f(t,\bx,\bv)$  depending on time $t\geq 0$, position
$\bx\in\Omega\subset \RR^3$ and velocity $\bv\in\RR^3$. The transport
equation is written in adimensional form as
\be\label{eq:vlasov}
\d_t f^\eps\,+\,\Div_\bx(f^\eps\,\bv)
\,+\,\Div_\bv\left(f^\eps\,\left(\frac{\bv\wedge \bB(t,\bx)}{\eps}\,+\,\bE(t,\bx)\right)\right)\,=\,0\,,
\ee
where the parameter $\eps$ accounts for the high intensity of the
external magnetic field, $1/\eps$  being related to the so-called gyro-frequency.  The  characteristic flow associated to this
transport equation is encoded by the following ODEs
\be
\label{eq:xv}
\left\{
\ba{l}
\ds\frac{\dD\bx}{\dD t}\,=\,\bv\,,
\\[0.9em]
\ds\frac{\dD\bv}{\dD t}\,=\,\frac{\bv\wedge \bB(t,\bx)}{\eps} \,+\,\bE(t,\bx)\,,
\ea
\right.
\ee
where $\wedge$ denotes the standard vector product on $\RR^3$, $\bB$ stands for the external magnetic field, $\bE$ for an electric field, either external or obtained by solving a field equation. This kinetic equation provides an appropriate description of turbulent transport  in a  general context,  but it also requires to solve a high dimensional problem which leads to a huge computational cost.  One approach consists in  reducing the cost of numerical simulations, by deriving asymptotic models with a smaller number of variables than the kinetic description. Indeed, large magnetic fields  usually lead to the so-called drift-kinetic limit \cite{haz_ware_78,haz_mei_03}. We refer to \cite{HanKwan_PhD,Lutz_PhD,Herda_PhD} and \cite{FR:JEP} for a detailed account of mathematical results on this topic and relevant entering gates to the extensive physical literature. Besides those, on the physical side, we only point out \cite{Burby2020}, as posterior to the references that may found there, and closer to, but distinct from, \cite{FR:JEP} that inspires the numerical methods introduced in the present contribution.


Other approaches are based on the construction of efficient particle
solvers for the original dynamics, to be used as a piece of a PIC scheme. Over the last decade, considerable efforts have been devoted to the design of such solvers and we refer the reader to \cite{boris,vu1995,Cohen2007,Webb2014,FR1,FR2,Hairer2017,cruz2,cruz1,chacon2020,Hairer2020,Wang2020,FR3,wang2021,hairer2022,chen2022} for both significant contributions and relevant entering gates to the now abundant literature. Along these years, roughly speaking, two kind of goals were assigned to the built numerical schemes. On one hand, one may wish to enforce the preservation of some of the geometrical structures of the original system (symplecticity, conservation of the total energy, or of a momentum associated with some group of symmetry,...). On the other hand, one may try to ensure that the designed schemes are consistent with the above-mentioned  asymptoptic reduction, or at least with some of its consequences (approximate conservation of adiabatic invariants, effective spatial drifts,...). Schemes satisfying (some of) the former conditions are typically called structure preserving schemes, whereas those satisfying a version of the latter are named asymptotic preserving. Note that the asymptotic preserving property includes that in the limit $\eps\rightarrow 0$, schemes do capture accurately the non stiff part of the evolution while allowing for coarse discretization parameters. 

A feature of the present evolution, or more generally of rapidly oscillating dynamics, that makes it difficult to capture numerically is that the two kinds of requirement may lead to conflicting choices. Indeed, an apparent paradox to solve is that, in the regime when $\eps$ is small and $(\bx,\bv)$ solve \eqref{eq:xv}, the transverse microscopic kinetic energy $\|\bvperp\|^2/2$, entering in many conserved quantities, evolves slowly and remains of size $1$ whereas the transverse velocity $\bvperp$, that is, the part of the velocity orthogonal to $\bB$, converges (weakly) to an effective drift of size $\eps$, by rapidly oscillating about it, the latter oscillatory convergence being the core of the gyro-kinetic asymptotic reduction. For this reason, many of the structure preserving schemes built with classical tools from geometric numerical integration, such as multi-step schemes \cite{Hairer2017}, variational schemes \cite{Webb2014,Hairer2020,Wang2020}, or splitting schemes \cite{wang2021}, fail to capture accurately the correct asymptotic behavior, and, even worse, many of them are only known to provide the desired structure preservation under upper size constraints on $\Delta t/\eps$, $\Delta t$ denoting the numerical time step. Unfortunately, so far proposed fixes for these geometric schemes, such as the introduction of $\eps$-dependent filters \cite{hairer2022}, are consistent with the exact dynamics, as $\Delta t$ goes to zero, only under lower size restrictions on $\Delta t/\sqrt{\eps}$ so that for the moment none of these provide a satisfactory behavior for the whole range of relevant physical and numerical parameters.

A much less standard class of schemes developed in \cite{cruz2, cruz1}, consists of explicitly doubling time variables, going from $(t,\bx, \bv)$ to $(t, \tau, \bx, \bv)$, where $\tau$ is a periodic time, the original system being recovered at the $\eps$-diagonal $(t, \tau )=(t, t/\eps)$. The corresponding methods are extremely good at
capturing oscillations, and some of them do preserve parts of the relevant geometric structures. Yet their design requires a deep a priori understanding of the detailed structure of oscillations and they seem hard to implement efficiently and to combine with standard field solvers used to compute electromagnetic fields. 

The class of semi-implicit schemes proposed in \cite{FR1,FR2,FR3} focuses on the less ambitious goal of guaranteeing only the asymptotic preserving property, thus the recovering of both the exact dynamics as $\Delta t$ goes to zero, uniformly with respect to $\eps$, and the slow reduced dynamics, including the guiding-center particle motion, when $\eps$ goes to zero, uniformly with respect to $\Delta t$. We stress that by many respects those schemes are remarkably natural and simple, and may easily be inserted in a standard PIC code. For complex geometries, our schemes are designed as high-order semi-implicit schemes \cite{BFR:15} applied to an augmented formulation.  Our supporting strategy is quite systematic and versatile, but so far we have implemented it only for homogeneous magnetic fields \cite{FR1,FR3} and magnetic fields pointing in a fixed direction \cite{FR2}.

Incidentally, we point out that similar restrictions hold for all the schemes described so far, in the sense that mathematical guarantees for asymptotic preservation have been provided only for magnetic fields pointing in a fixed direction, as in \cite{FR2}, and for magnetic fields with a homogeneous intensity, that is, with $\|\bB\|$ constant, as in \cite{cruz1}.  The former configuration precludes magnetic curvature drifts, that are notoriously difficult to capture, whereas the latter ensure that particles share the same period thus remain synchronized, a situation much easier to analyze with filtering or averaging techniques. 

In the present article, we show how our approach may be extended to genuinely three
dimensional magnetic fields, with symmetries of a torus configuration. The geometric framework is thought as a toy model, mimicking realistic configurations used in tokamak devices. However we restrain from designing schemes that provide a second-order description of the full slow dynamics (as in \cite{FR2}) so as to gain, in the trade-off, a structure preservation property, the approximate conservation of the total energy, but also to maintain the complexity of the designed schemes to a bare minimum. With this respect, our present goals are similar to those underlying the design of the fully-implicit numerical schemes in \cite{chacon2020,chen2022}.

\medskip

The rest of the paper is organized as follows. In Section \ref{sec:2},
we present the geometric framework of our study and reformulate the equation of motion
\eqref{eq:xv} in order to identify carefully fast and slow scales. In Section~\ref{sec:2bis}, we derive the expected asymptotic behavior in the regime $\eps \ll 1$ by applying, at the continuous level, the arguments devised in \cite{FR:JEP}.  Then in Section \ref{sec:3}, we present several time discretization techniques and we prove uniform consistency of
the schemes in the limit $\eps\rightarrow 0$. 
 Finally, Section \ref{sec:4} is then devoted to numerical simulations of particle motion in various regimes, including  $\eps \approx 1$ and $\eps\ll 1$.

\section{Toroidal configuration}
\setcounter{equation}{0}
\label{sec:2}

Firstly, we identify adapted coordinates. 
Hence, we pick some radius $R_0>0$ for the torus and introduce toroidal coordinates through $\bx=\bX(r,\theta,\varphi)$, where
\[
\bX(r,\theta,\varphi)\,=\,
\left(
\begin{array}{l}
  R(r,\theta)\cos(\varphi)
  \\[0.9em]
  R(r,\theta)\sin(\varphi)
  \\[0.9em]
  r\sin(\theta)
\end{array}
  \right),
\]
with $R(r,\theta):=R_0+r\cos(\theta)$ and $(r,\theta,\varphi)$ varying in $(0,R_0)\times(\RR/2\pi\ZZ)\times (\RR/2\pi\ZZ)$. To prepare the corresponding change of variables for the velocity we also introduce the orthonormal basis  $(\eDr,\eDt,\eDp)$ by
\[
\left\{
\begin{array}{ll}
\eDr(\theta,\varphi)&=(\cos(\theta)\,\cos(\varphi),\,\cos(\theta)\,\sin(\varphi),\,\sin(\theta))\,,\\[0.9em]
\eDt(\theta,\varphi)&=(-\sin(\theta)\,\cos(\varphi),\,-\sin(\theta)\,\sin(\varphi),\,\cos(\theta))\,,\\[0.9em]
\eDp(\varphi)&=(-\sin(\varphi),\,\cos(\varphi),\,0)\,.
\end{array}
\right.
\]

Then we replace original coordinates $(\bx,\bv)$ with $(r,\theta,\varphi,v_r,v_\theta,v_\varphi)$ where
\[
v_\alpha = \langle \bv,\beD_\alpha\rangle, \quad{\rm for} \quad\alpha \in\{r, \,\theta, \,\varphi\}
\]
In the new coordinates, the equations of motion \eqref{eq:xv} become
\be
\label{eq:xv2}
\left\{
\ba{l}
\ds\frac{\dD r}{\dD t}\,=\,v_r\,,
\hspace{2em} 
r\,\frac{\dD \theta}{\dD t}\,=\,v_\theta\,,
\hspace{2em} 
R(r,\theta)\,\frac{\dD \varphi}{\dD t}\,=\,v_\varphi\,,\\[0.9em]
\ds\frac{\dD v_r}{\dD t}\,=\,\frac{v_\varphi\,B_\theta-v_\theta\,B_\varphi}{\eps} \,+\,E_r\,+\,\frac{v_\theta^2}{r}\,+\,\cos(\theta)\,\frac{v_\varphi^2}{R} ,
\\[0.9em]
\ds\frac{\dD v_\theta}{\dD t}\,=\,\frac{v_r\,B_\varphi-v_\varphi\,B_r}{\eps} \,+\,E_\theta\,-\,\frac{v_r\,v_\theta}{r}\,-\,\sin(\theta)\,\frac{v_\varphi^2}{R},\\[0.9em]
\ds\frac{\dD v_\varphi}{\dD t}\,=\,\frac{v_\theta\,B_r-v_r\,B_\theta}{\eps} \,+\,E_\varphi\,-\,\cos(\theta)\,\frac{v_r\,v_\varphi}{R}\,+\,\sin(\theta)\,\frac{v_\theta\,v_\varphi}{R},
\ea
\right.
\ee
where consistently we have set 
\[
E_\alpha \,=\, \langle \bE,\beD_\alpha\rangle,\qquad  B_\alpha \;=\; \langle \bB,\beD_\alpha\rangle,\quad{\rm for} \quad \alpha\in\{r,\varphi,\theta\}.
\]

Now we make the assumption that the magnetic field is steady and axi-symmetric, that is, that the magnetic field components $(B_r,B_\varphi,B_\theta)$ do not depend on $t$ and $\varphi$. Moreover we also assume that magnetic field lines are contained in $r$ level surfaces, that is, that $B_r\equiv 0$. Hence the magnetic field $\bB$ takes the form
\begin{equation}
  \label{hyp:0}
\bB(r,\theta,\varphi)=b(r,\theta)\eDpar(r,\theta,\varphi)
\end{equation}
with $b$ positive-valued and the unit vector $\eDpar$ defined by
\be
\label{hyp:1}
\eDpar(r,\theta,\varphi)
\,=\, \cos(\omega)\,\eDp(\varphi)\,+\,
\sin(\omega)\,\eDt(\theta,\varphi)\,,
\ee
where $\omega$ 
is a real valued function depending only on $(r,\theta)$.

There are two key features in the present form of $\bB$. On one hand, in suitable coordinates, the dependence on the angle $\varphi$ of the dynamics of other variables occurs only through the possible non axi-symmetry of the electric field $\bE$. On the other hand, independently of any further assumption on $\omega$, magnetic field lines are everywhere orthogonal to $\eDr$. In particular, introducing $\eDperp$ through
\be
\label{hyp:2}
\eDperp(r,\theta,\varphi)
\,=\,\sin(\omega)\,\eDp(\varphi) \,-\, \cos(\omega)\,\eDt(\theta,\varphi)\,,
\ee
one obtains $(\eDr,\eDperp,\eDpar)$ valued in direct orthonormal bases.

\begin{remark}
The analysis of \cite{FR:JEP} applies to an arbitrary geometry and, in principal, the corresponding numerical approach described here could be used in such a generality. Yet, in full generality, some of the involved computations turn out to be rather cumbersome. We use axi-symmetry and the orthogonality of $\eDr$ and $\bB$ to simplify the latter. We would like to point out a simple way to relax the latter assumption so as to include concrete applications without dramatically increasing computational complexity. On one hand, at a similar price, one may replace $(r,\theta)$ with another set of two-dimensional coordinates $(\psi,\chi)$ generating an orthonormal frame $(\beD_\psi,\beD_\chi,\eDp)$ such that $\beD_\psi$ and $\bB$ are orthogonal. This allows to treat geometries that are axi-symmetric toroidal-like.  
\end{remark}

Therefore, since the leading-order dynamics is a rotation of the velocity in the plane orthogonal to $\eDpar$, it is convenient to use for $\bv$ a frame adapted to $\eDpar$, and we shall use $(\eDr,\eDperp,\eDpar)$ to provide such a frame. Accordingly we introduce
\[
v_\alpha \,=\, \langle \bv,\beD_\alpha\rangle,
\qquad
E_\alpha \,=\, \langle \bE,\beD_\alpha\rangle, \quad{\rm for} \quad\alpha \in\{r,\perp, \mypar\}
\]
and
\begin{align*}
\bvperp&=(\vr,\vperp)\,,&\bEperp&=(\Er,\Eperp)\,.
\end{align*}
Therefore from the definition of $(\eDpar,\eDperp)$ in \eqref{hyp:1} and \eqref{hyp:2}, we notice that
\be
\label{def:0}
\left\{\ba{l}
\vpar \,=\,\cos(\omega)\,v_\varphi\,+\,\sin(\omega)\,v_\theta,
\\[0.9em]
\vperp \,=\,\sin(\omega)\,v_\varphi\,-\,\cos(\omega)\,v_\theta
\ea\right.
\ee
and conversely,
$$
\left\{\ba{l}
v_\varphi \,=\,\cos(\omega)\,\vpar\,+\,\sin(\omega)\,\vperp,
\\[0.9em]
v_\theta \,=\,\sin(\omega)\,\vpar\,-\,\cos(\omega)\,\vperp.
\ea\right.
$$
Hence we may now write  System~\eqref{eq:xv} in terms of $(r,\theta,\varphi,\bvperp,\vpar)$ as, 
\begin{align}
  \label{eq:tor-rtp1}
\frac{\dD}{\dD t}\bp r\\[0.9em] \varphi\\[0.9em] \theta \ep
&\,=\,
\bp
v_r
\\[0.9em]
\ds\frac{\cos(\omega)\,\vpar \,+\,\sin(\omega) \,\vperp }{R}
\\[0.9em]
\ds\frac{\sin(\omega)\,\vpar\,-\,\cos(\omega) \,\vperp}{r}
\ep\,,
\end{align}
whereas the equation for $\vpar$ is obtained using \eqref{eq:xv2} and \eqref{def:0},
\begin{eqnarray*}
  \frac{\dD \vpar}{\dD t} &=&  - \vperp \,\left(\partial_r\omega \,v_r\,+  \frac{\partial_\theta \omega}{r} \,v_\theta \right) \,+\,  \cos(\omega) \frac{\dD v_\varphi}{\dD t} \,+\,\sin(\omega)\, \frac{\dD v_\theta}{\dD t}
\\
&=& - \vperp \,\left( \partial_r\omega \,v_r\,+  \frac{\partial_\theta \omega}{r} \,v_\theta\right) \,+\, \Epar \,-\,\frac{\sin(\theta)}{R}\, v_\varphi \,v_\perp
\\
&&-\,v_r \,\left(\frac{\cos(\theta)}{R}\,\cos(\omega)\, v_\varphi \,+\,\frac{1}{r}\,\sin(\omega)\,v_\theta\right).
\end{eqnarray*}
Then replacing $(v_\theta,v_\varphi)$ with their expression with respect to $(\vpar,\vperp)$ yields
\be
\label{eq:vpar}
\left\{
\ba{l}
\ds\frac{\dD \vpar}{\dD t} \,=\, \Epar + \Fpar, 
 \\[0.9em]
\ds
\textrm{with }\qquad\Fpar \,:=\, \gamma \,\vpar\,v_r \,+\, \alpha\,\vpar\,\vperp \,+\, \left(\delta-\partial_r\omega\right)\,\vperp\,v_r \,+\, \beta \;\vperp^2
\ea\right.
\ee
 where the parameters $\alpha$, $\beta$, $\gamma$ and $\delta$ depend
 only on  $(r,\theta)$,
 \be
 \left\{
 \ba{l}
 \ds\alpha \,=\,-\frac{\partial_\theta\omega}{r} \,\sin(\omega) \,-\,  \frac{\sin(\theta)}{R}\,\cos(\omega)\,,
 \\[0.9em]
 \ds\beta \,=\,\frac{\partial_\theta\omega}{r} \,\cos(\omega) \,-\,  \frac{\sin(\theta)}{R}\,\sin(\omega)\,,
\\[0.9em]
 \ds\gamma \,=\, -\frac{\sin^2(\omega)}{r}  \,-\, \frac{\cos(\theta)}{R} \,\cos^2(\omega)\,,
 \\[0.9em]
 \ds\delta \,=\,-\left(\frac{\cos(\theta)}{R}\,-\,  \frac{1}{r}\right)\,\sin(\omega)\cos(\omega)\,.
 \ea\right.
\label{def:alpha_et_al}
\ee
Likewise, we obtain an equation for $\bvperp=(v_r,\vperp)$ as
\be
\label{eq:vperp}
\left\{\ba{l}
\ds\frac{\dD v_r}{\dD t} \,=\, E_r \,+\, \Cr \,+\,\frac{b\,\vperp}{\eps}\,,
 \\[0.9em]
\ds\frac{\dD \vperp}{\dD t} \,=\, \Eperp \,+\, \Cperp
\,-\, \frac{b\,v_r}{\eps}\,, 
  \ea\right.
 \ee
  where $\bCperp=(\Cr,\Cperp)$,
 $$
 \left\{
 \ba{l}
\ds\Cr \,:=\,  -\gamma\, \vpar^2\,-\,2\,\delta\,\vpar\,\vperp\,+\, \zeta\,\vperp^2\,,
\\[0.9em]
\ds\Cperp \,:=\,  -\alpha\, \vpar^2\,+\,\left(\partial_r\omega + \delta \right)\,\vpar\,v_r \,-\, \beta\,\vpar\,\vperp\,-\, \zeta\,v_r\,\vperp
\ea\right.
$$
and
\be
\label{def:zeta}
 \zeta \,=\, \frac{\cos^2(\omega)}{r} \,+\, \frac{\cos(\theta)}{R}\,\sin^2(\omega)\,.
\ee
This formulation allows to split the parallel and perpendicular
directions with respect to the magnetic field. In particular, we get that at leading-order $\bvperp$ is oscillating at a
frequency of order $1/\eps$ whereas $(r,\theta,\varphi,\vpar)$ exhibit
a slower dynamics. Furthermore, it allows to identify another slow
variable $\|\bvperp\|^2$, hence we introduce the new variable $b\mu
\,:=\, {\|\bvperp\|^2}/{2}$ and  write its slow dynamic from
\eqref{eq:vperp}  as
\begin{align}
  \label{eq:bmu}
\frac{\dD b\mu}{\dD t}\,=\, \langle\bEperp,\,\bvperp\rangle \,-\,\vpar\,\Fpar\,.
\end{align}
Note that our convoluted notation $b\,\mu$ is chosen to respect standard notational conventions of the physical literature devoted to gyro-kinetic reductions; see the related discussion in \cite{FR:JEP}.

Our final step is motivated by the simple observation that when deriving asymptotic models the stiff part of \eqref{eq:vperp} is used to replace, in the equations of slower components, $\bvperp$ with slower or lower-order terms. To prepare such eliminations, it is convenient to divide System \eqref{eq:vperp} by $b$. This hints at the introduction of the new variable
\[
\buperp=\frac{\bvperp}{b}.
\]
As a side effect, this will exhibit the effects of the gradients of the intensity of the
magnetic field $b$ on the reduced asymptotic dynamics. With $\buperp$ in hands, we observe that \eqref{eq:vperp} may be replaced with
\be
\label{eq:uperp}
\left\{\ba{l}
\ds\eps\,\frac{\dD u_r}{\dD t} \,=\, \eps\,\frac{E_r + \Cr + \Gr}{b} \,+\,\vperp\,,
 \\[0.9em]
\ds\eps\,\frac{\dD \uperp}{\dD t} \,=\, \eps\,\frac{\Eperp +\Cperp + \Gperp}{b}
\,-\, v_r\,, 
  \ea\right.
 \ee
where $\bGperp =(\Gr,\Gperp)$ is given by
 $$
 \left\{
 \ba{l}
\ds\Gr \, := \, \eta\, v_r\,\vpar \,+\, \kappa\,v_r\,\vperp \,+\, \lambda\,v_r^2,
\\[0.9em]
\ds\Gperp \, := \,  \eta\,\vpar\,\vperp\,+\, \lambda\,v_r\,\vperp  \,+\, \kappa\,\vperp^2
\ea\right.
$$
and $(\alpha,\beta,\gamma,\delta)$ is defined in
\eqref{def:alpha_et_al} and the triplet $(\eta,\kappa,\lambda)$ is 
\be
\left\{
 \begin{array}{l}
\ds\eta \,=\,  -\frac{\sin(\omega)}{r}\, \frac{\partial_\theta b}{b},
\\[0.9em]
 \ds\kappa \,=\, \frac{\cos(\omega)}{r}\, \frac{\partial_\theta b}{b},
   \\[0.9em]
\ds\lambda \,=\, -\frac{\partial_r b}{b}.
 \end{array}\right.
 \label{def:eta_kappa}
 \ee
 
In \eqref{eq:uperp}, $\bEperp$ takes into account the effects of the electric field, whereas $\bCperp$ describes the curvature effects and $\bGperp$ those of the gradient of the magnetic field intensity. Let us observe that the structure of \eqref{eq:tor-rtp1}-\eqref{def:eta_kappa} strongly echoes the one of the two-dimensional inhomogeneous situation dealt with in \cite{FR2}, with $\bvperp$ playing the role of the two-dimensional velocity and $(r,\theta,\varphi,\vpar, b\,\mu)$ playing the role of the spatial position and the microscopic kinetic energy.
 
To prepare further manipulations, we write \eqref{eq:uperp} in a more compact form as
\be
\label{def:uperp2}
\bvperp \,=\, \eps\,\bUperp \,\,-\,\eps
\frac{\dD\,}{\dD t}\left(\bJ_0\buperp\right)\,,
\ee
where
\[
\bJ_0\,:=\,\bp 0&1\\-1&0 \ep\,,
\]
and
\be
\label{def:Uperp}
\bUperp:= \frac{\bJ_0}{b}\left(\bEperp \,+\, \bCperp \,+\, \bGperp\right).
\ee 
 
 \section{Asymptotic dynamics}
 \label{sec:2bis}
 
 We now consider the asymptotic regime $\eps\ll 1$. When doing so, our focus is two-fold. On one hand, we are interested in the identification of the asymptotic reduction by itself, so as to know what are the objectives for the numerical schemes introduced below. On the other hand, we are also interested in unraveling the algebraic identities that supports the asymptotic reduction, since a convenient way to ensure that the asymptotic reduction takes also place at the discrete level is precisely to enforce discrete counterparts to such identities in the numerical schemes.
 
 We could adapt or apply the arguments of \cite{FR:JEP} and obtain a fully rigorous mathematical analysis. Yet this would lead us too far beyond the main scope of the present paper. Instead, we borrow mostly the algebraic part of \cite{FR:JEP}.
 
\subsection{First order asymptotics}

To begin with, as in \cite[Lemmas~3.3~\&~4.3]{FR:JEP}, we observe that Equation~\eqref{def:uperp2} may be combined with slower equations to eliminate not only $\bvperp$ but any expression linear in $\bvperp$ with dependence on time and slow variables. Indeed if $t\mapsto \cL(t)$ is a smooth map valued in a space of linear operators, there exist two functions $(\chi_\bL,\eta_\bL)\in L^\infty_t$ such that the equations of motion imply
\be
\label{linear:est}
\bL\,\bvperp \,=\, -\eps\,\frac{\dD \chi_\bL}{\dD t}\,+\,\eps\, \eta_\bL\,.
\ee
In particular, from this one expects that $\bL\,\bvperp $ converge to zero
when $\eps$ tends to zero in some sense\footnote{To be precise, in the $W^{-1,\infty}$ topology for functions of the time variable. Let us stress again that most of nonlinear transformations are not continuous for such a weak topology.}. The underlying algebra is completely constructive but we refrain from giving it wherever it is not strictly needed. 

Now we show how to apply the foregoing first principles 
to the study of the asymptotic dynamics of the slow variables
$(r,\theta,\varphi,\vpar,b\mu)$.

The dependence of System~\eqref{eq:tor-rtp1} on $\bvperp$ is linear so that the foregoing argument is sufficient to reveal the first order part of it. We begin with the study of the time derivative of the variable $r$. Here the algebraic manipulation consists in using the second equation of \eqref{eq:uperp} so as to eliminate the right hand side $\vr$ in \eqref{eq:tor-rtp1}. In this way, one derives 
\[
\frac{\dD\,}{\dD t}\left(r+\eps\,\frac{\vperp}{b}\right)
\,=\,\eps\,\frac{\Eperp+\Cperp}{b}
+\eps\,\left(\vr\frac{\d_rb}{b}+\left(\vperp\cos(\omega)+\vpar\sin(\omega)\right)\,\frac{\d_\theta b}{b\,r}\right)\,\frac{\vperp}{b},
\]
which suggests that $r$ converges to the constant $r(0)$ when $\eps$ tends to
zero. We proceed in the same manner for the angle variables
$(\varphi, \theta)$ using the property \eqref{linear:est} to study the
asymptotic behavior of  linear terms with respect to $\bvperp$. It
gives that, for some $(\chi_\varphi,\eta_\varphi)$ and
 $(\chi_\theta,\eta_\theta)$,
\[
\frac{\dD}{\dD t}\left(\varphi \,+\, \eps\,\chi_{\varphi} \right)
\,=\, \frac{\cos(\omega)\,\vpar}{R} \,+\,
\eps\, \eta_{\varphi}
\]
and
\[
\frac{\dD}{\dD t}\left(\theta \,+\, \eps\,\chi_{\theta} \right)
 \,=\, \frac{\sin(\omega)\,\vpar}{r}  \,+\,
 \eps\, \eta_{\theta},
 \]
 allowing to characterize the limit equation on $(\varphi,\theta)$.
 when $\eps\rightarrow 0$.
  
Note that the equation for $\vpar$ given in \eqref{eq:vpar} contains both linear and quadratic terms in $\bvperp$. Hence, to go on, we need
to determine which quadratic terms may be eliminated. 
Proceeding as in \cite[Lemmas~3.8 and~4.4]{FR:JEP}, we extract slow components from
expressions quadratic in $\bvperp$. As a result, for any smooth map $\bA$ valued in linear maps on $\RR^2$, there exist two functions $(\chi_\bA,\eta_\bA)$ such that from the equations of motion stem
\be
\label{quad:est}
\langle\bA\,\bvperp,\,\bvperp\rangle \,=\, b\,\mu \,\Tr(\bA) \,-\,\eps\frac{\dD \chi_\bA}{\dD t}\,+\,\eps\, \eta_\bA,
\ee
where $b\mu$ is given by \eqref{eq:bmu}. Note that only trace-free quadratic expressions become negligible.

Consistently, we split  the right hand side of \eqref{eq:vpar} into its slow component and a trace-free expression, as
\[
\beta\,\vperp^2 \,=\, \beta\,\left[ b\mu  \,+\, \frac{1}{2} \left( \vperp^2 - v_r^2\right)  \right]\,.
\]
We then apply the linear and quadratic abstract eliminations with respectively
\[
\bL \,=\, \vpar\,\left(\gamma, \,\alpha \right) \qquad\textrm{ and }\qquad
\bA \,=\, \left( \ba{ll} -\beta/2 & \delta-\partial_r\omega \\ 0 & \beta/2
  \ea \right).
\]
Since $\Tr(\bA)=0$, there exists $(\chi_{\vpar},\eta_{\vpar})$ such that
\[
\frac{\dD}{\dD t}\left(\vpar \,+\, \eps\,\chi_{\vpar} \right) \,=\, \Epar +
\beta\,b\,\mu \,+\, \eps\, \eta_{\vpar}.
\]
We treat the time derivatives of  $b\,\mu$ in the same
manner and get a right hand side with a zero-th order term with
respect to $\eps$ and a correction of order $\eps$
\[
\frac{\dD}{\dD t}\left(b\mu \,+\, \eps\,\chi_{b\mu} \right) \,=\, -\vpar\,\beta\,b\,\mu \,+\, \eps\, \eta_{b\mu}.
\]

Gathering the latter results we receive
\[
\left\{
  \ba{l}
 \ds\frac{\dD\,}{\dD t}\left(r\,+\,\eps\,\frac{\vperp}{b}\right)
 \,=\, \eps\,\frac{\Eperp+\Cperp}{b}
+\eps\,\left(\vr\frac{\d_rb}{b}+\left(\vperp\cos(\omega)+\vpar\sin(\omega)\right)\,\frac{\d_\theta b}{b\,r}\right)\,\frac{\vperp}{b},
 \\[0.9em]
\ds\frac{\dD}{\dD t}\left(\varphi \,+\, \eps\,\chi_{\varphi} \right)
\,=\, \frac{\cos(\omega)\,\vpar}{R} \,+\,
\eps\, \eta_{\varphi},
\\[0.9em]
 \ds\frac{\dD}{\dD t}\left(\theta \,+\, \eps\,\chi_{\theta} \right)
 \,=\, \frac{\sin(\omega)\,\vpar}{r}  \,+\,
 \eps\, \eta_{\theta},
 \\[0.9em]
 \ds\frac{\dD}{\dD t}\left(\vpar \,+\, \eps\,\chi_{\vpar} \right) \,=\, \Epar +
 \beta\,b\,\mu \,+\, \eps\, \eta_{\vpar},
 \\[0.9em]
 \ds\frac{\dD}{\dD t}\left(b\mu \,+\, \eps\,\chi_{b\mu} \right) \,=\, -\vpar\,\beta\,b\,\mu \,+\, \eps\, \eta_{b\mu},
 \ea\right.
\]
where for any $s\in\{r,\,\varphi,\,\theta,\,\vpar,\,b\,\mu\}$, the
functions  $\eta_s$ and $\chi_s$ can be computed explicitly and do depend on $(v_r,\vperp)$. When the four first equations of the latter system are supplemented with System~\eqref{eq:vperp} on $(v_r,\vperp)$, it is of course  equivalent to  the initial one given by \eqref{eq:xv2} and the last equation of System~\eqref{eq:vperp} comes as a consequence. But System~\eqref{eq:vperp} is also well adapted to capture the leading order terms with respect to $\eps$. 

After these algebraic manipulations, applying the analytic arguments \cite{FR:JEP} indeed proves that solutions to the following closed\footnote{There is no dependence on  $(v_r,\vperp)$ anymore.} system
\be
\label{order:1}
\left\{
  \ba{l}
 \ds\frac{\dD r}{\dD t}
\,=\,0,
 \\[0.9em]
\ds\frac{\dD \varphi}{\dD t} \,=\, \frac{\cos(\omega)\,\vpar}{R},
\\[0.9em]
 \ds\frac{\dD \theta}{\dD t} \,=\, \frac{\sin(\omega)\,\vpar}{r},
 \\[0.9em]
 \ds\frac{\dD \vpar}{\dD t} \,=\, \Epar +
 \beta\,b\,\mu,
 \\[0.9em]
 \ds\frac{\dD b\mu}{\dD t} \,=\, -\vpar\,\beta\,b\,\mu,
 \ea\right.
\ee
provide an approximation of the slow variables up to $\cO(\eps)$ errors.

\medskip

We list now a few properties of the first-order asymptotic system. Obviously $r$ is constant along the flow. 
A few more conservations may be obtained if one assumes classical extra structure on electromagnetic fields.

\begin{proposition}
\label{prop:cons1}
Assume that $\bE=-\nabla_\bx\phi$ with $\phi$ not depending on time and that the confining magnetic field satisfies the Gauss law 
\[
\Div_\bx\bB\,=\, 0.
\]
Then solutions to the asymptotic model \eqref{order:1} satisfy
\begin{itemize}
\item the conservation of energy
\[
\frac{\dD}{\dD t} \left(\frac{\vpar^2}{2}+b\mu +\phi\right)   \,=\,0\,;
\]
\item the conservation of the classical adiabatic invariant 
\[
\frac{\dD\mu}{\dD t} \,=\, 0.
\]
\end{itemize} 
\end{proposition}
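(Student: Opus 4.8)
The plan is to verify both conservation laws by direct differentiation along the flow of \eqref{order:1}, using the two structural hypotheses to rewrite the terms that would otherwise obstruct cancellation.

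\textbf{Energy conservation.} First I would compute
\[
\frac{\dD}{\dD t}\left(\frac{\vpar^2}{2}+b\mu\right)
\,=\,\vpar\,\frac{\dD\vpar}{\dD t}\,+\,\frac{\dD (b\mu)}{\dD t}
\,=\,\vpar\,(\Epar+\beta\,b\mu)\,-\,\vpar\,\beta\,b\mu
\,=\,\vpar\,\Epar,
\]
so the curvature/gradient coefficient $\beta$ drops out and only the parallel electric work survives. It then remains to identify $\vpar\,\Epar$ with $-\tfrac{\dD}{\dD t}\phi$ along \eqref{order:1}. Since $\phi$ does not depend on time, $\tfrac{\dD}{\dD t}\phi=\langle\nabla_\bx\phi,\tfrac{\dD\bx}{\dD t}\rangle$, and along the asymptotic flow $\tfrac{\dD\bx}{\dD t}$ has vanishing $\eDr$-component ($\dot r=0$) and a $\eDpar$-component equal to $\vpar$ (this is exactly what the $\varphi$- and $\theta$-equations encode once one expresses $\tfrac{\dD\bx}{\dD t}=\dot r\,\eDr+R\dot\varphi\,\eDp+r\dot\theta\,\eDt$ and uses \eqref{hyp:1}); there is no $\eDperp$-component. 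Hence $\langle\nabla_\bx\phi,\tfrac{\dD\bx}{\dD t}\rangle=\vpar\,\langle\nabla_\bx\phi,\eDpar\rangle=-\vpar\,\Epar$, which closes the computation. The only mildly delicate point is bookkeeping the chain rule through the toroidal change of variables to confirm $\Epar=\langle\bE,\eDpar\rangle$ matches the directional derivative of $\phi$ along the reduced velocity; this is routine given the setup of Section~\ref{sec:2}.

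\textbf{Adiabatic invariant.} Writing $\mu=(b\mu)/b$ with $b=b(r,\theta)$, I would compute
\[
\frac{\dD\mu}{\dD t}\,=\,\frac{1}{b}\frac{\dD(b\mu)}{\dD t}\,-\,\frac{b\mu}{b^2}\frac{\dD b}{\dD t}
\,=\,-\,\frac{\vpar\,\beta\,b\mu}{b}\,-\,\frac{b\mu}{b^2}\left(\d_r b\,\frac{\dD r}{\dD t}+\d_\theta b\,\frac{\dD\theta}{\dD t}\right).
\]
Using $\dot r=0$ and $\dot\theta=\sin(\omega)\vpar/r$, this becomes $-\tfrac{b\mu}{b}\vpar\big(\beta+\tfrac{\sin(\omega)}{r}\tfrac{\d_\theta b}{b}\big)$. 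So the claim reduces to the \emph{purely geometric identity}
\[
\beta\,+\,\frac{\sin(\omega)}{r}\,\frac{\d_\theta b}{b}\,=\,0,
\]
which is not true for arbitrary $b$ and $\omega$ — it is precisely here that the Gauss law $\Div_\bx\bB=0$ must enter. The main obstacle, and the real content of this half of the proof, is to translate $\Div_\bx(b\,\eDpar)=0$, written in the toroidal frame $(\eDr,\eDperp,\eDpar)$ with $\bB$ of the form \eqref{hyp:0}--\eqref{hyp:1}, into exactly the relation $\d_\theta b/b=-\,(r/\sin(\omega))\,\beta$ — equivalently, recalling \eqref{def:alpha_et_al}, that the divergence-free condition forces $\d_\theta b/b=r\big(\tfrac{\sin(\theta)}{R}\sin(\omega)-\tfrac{\d_\theta\omega}{r}\cos(\omega)\big)/\sin(\omega)$. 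I would obtain this by expanding $\Div_\bx\bB$ using the divergences of $\eDt,\eDp$ (computable from the torus metric, with volume element $rR\,\dd r\,\dd\theta\,\dd\varphi$) together with $\langle\nabla_\bx(\,\cdot\,),\eDpar\rangle$ acting on $b\cos\omega$ and $b\sin\omega$; axisymmetry kills the $\eDp$-derivative term, and collecting the $r$- and $\theta$-derivative contributions yields one scalar equation. Matching it against the definition of $\beta$ in \eqref{def:alpha_et_al} gives the needed identity, and then $\tfrac{\dD\mu}{\dD t}=0$ follows immediately.

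I expect the energy part to be essentially a one-line cancellation plus a routine chain-rule check, while the adiabatic-invariant part hinges entirely on correctly expanding the Gauss law in curvilinear coordinates; that divergence computation is the step to get right.
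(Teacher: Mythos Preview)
Your proposal is correct and follows essentially the same route as the paper: for energy, the $\beta\,b\mu$ terms cancel and the remaining $\vpar\Epar$ is identified with $-\dD\phi/\dD t$ via the $(\varphi,\theta)$-equations (the paper does this in coordinates rather than invoking $\dD\bx/\dD t=\vpar\eDpar$, but it is the same computation); for $\mu$, the paper likewise reduces to the identity $\beta+\tfrac{\sin(\omega)}{r}\tfrac{\d_\theta b}{b}=0$, which it obtains by writing $\Div_\bx\bB=\tfrac{1}{Rr}\d_\theta(R\,B_\theta)=0$ and expanding $B_\theta=b\sin\omega$.
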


We stress that the conservation of energy already holds for the original equations of motion whereas conservations of $r$ and $\mu$ hold only for the first-order asymptotic model.

\begin{proof}
Let us suppose that the electric field $\bE$ derives from a potential $\phi$, that is, $\bE=-\nabla\phi$, 
the corresponding balance law for the total energy of the asymptotic model is
\begin{eqnarray*}
\frac{\dD}{\dD t} \left(\frac{\vpar^2}{2}\,+\,b\,\mu\,+\,\phi\right)
  &=&  \Epar\,\vpar \,+\,\left( \d_t\phi  \,+\,\partial_\varphi \phi\,
    \frac{\dD \varphi}{\dD t}  + \,+\,\partial_\theta \phi\,
      \frac{\dD \theta}{\dD t} \right).
\end{eqnarray*}
Then, we observe that
\[
 \Epar\,=\, \langle -\nabla\phi, \eDpar\rangle \,=\, - \cos(\omega)\,
 \frac{\partial_\varphi \phi}{R}  - \sin(\omega)\,  \frac{\partial_\theta \phi}{r}
 \]
 and, by using the equations on $(\varphi,\theta)$ from \eqref{order:1}, that the
 energy balance law of energy  reduces to the claimed conservation law
 when $\d_t\phi\equiv0$
 \[
\frac{\dD}{\dD t} \left(\frac{\vpar^2}{2}\,+\,b\,\mu\,+\,\phi\right)
  \,=\, 0. 
\]

Note moreover that from the axi-symmetric assumption
\eqref{hyp:0}-\eqref{hyp:1} on the magnetic
field follows that the Gauss law is
\[
0\,=\,\Div_\bx\bB\,=\, \frac{1}{R\,r} \partial_\theta\left( R\,B_\theta\right)
\,=\, \frac{\partial_\theta B_\theta}{r} \,-\
\frac{B_\theta\,\sin\theta}{R} \,,
\]
which in terms of $(b,\omega)$ is written as
\[
\frac{\sin(\omega) }{r}\,\partial_\theta b \,+\,
b\,\left(\frac{\cos(\omega)}{r}\, \partial_\theta \omega   \,-\,
  \frac{\sin(\omega)}{R}\,\sin(\theta)\right)  \,=\,0\,.
\]
Hence, the Gauss law implies that solutions to the asymptotic model \eqref{order:1} satisfy
\begin{eqnarray*}
\frac{\dD \mu}{\dD t} 
  &=& -\left(\frac{\partial_\theta\omega}{r} \,\cos(\omega) \,-\,
      \frac{\sin(\theta)}{R}\,\sin(\omega) \right)\,\vpar \,\mu  \,-\, \frac{\mu}{b}\,
      \partial_\theta b\,\frac{\dD \theta}{\dD t} 
      \\
  &=& -\left(\frac{\partial_\theta b}{r}\,\sin(\omega) \,+\, \frac{\partial_\theta\omega}{r} \,b\,\cos(\omega) \,-\,
      \frac{\sin(\theta)}{R}\,b\,\sin(\omega) \right)\,\frac{\vpar
      \,\mu}{b}  
 \,=\,0\,,
\end{eqnarray*}
which is indeed a conservation law.
\end{proof}


\subsection{Second order corrections}
The previous asymptotics is not completely  satisfactory since it does not take into account 
drifts in the $r$ direction, which is perpendicular to the magnetic field lines. Capturing such drifts is key to the computational examination of confinement properties of fusion devices and, in \cite{FR1,FR2,FR3}, this is precisely the numerical computation of the corresponding perpendicular second-order drifts that is enforced by designing asymptotic preserving schemes.

The strategy sketched in the foregoing section, and fully worked out in \cite{FR:JEP}, may be pursued one step further so as to derive a second order system encoding the dynamics of all the slow variables up to $\cO(\eps^2)$ errors. In particular, this includes a leading order description of spatial trajectories in all directions. Indeed, such a task is carried out in \cite{FR:JEP} for arbitrary geometries. See \cite[Figure~2.2]{FR:JEP} and the surrounding discussion for an illustration of the dramatic effects of including such second order corrections on the correct prediction of the shape of spatial trajectories.

Yet for our practical purposes, we anticipate that the specialization of \cite{FR:JEP} to our current coordinates would result in cumbersome formula. Instead, we focus on providing second order descriptions only for variables that are so slow that the first order model \eqref{order:1} cannot capture their leading order dynamics, that is, only for $r$ and $\mu$, or more precisely for relevant first order corrections of those. To achieve this goal, one could follow the same process used to derive System~\eqref{order:1} and obtain 
\[
\frac{\dD\,}{\dD t}\left(r+\eps\,\frac{\vperp}{b}+\eps^2\,\chi_r\right)
\,=\,\eps\,\frac{\Eperp-\alpha\,\vpar^2+\kappa\,b\mu}{b}\,+\,\eps^2\,\eta_r,
\]
for some $(\chi_r,\eta_r)$. In terms of asymptotic reduced model, this suggests to either replace the first equation of \eqref{order:1} with 
\[
\frac{\dD\,r}{\dD t}
\,=\,\eps\,\frac{\Eperp-\alpha\,\vpar^2+\kappa\,b\mu}{b}\,,
\]
or to first solve \eqref{order:1} and then solve the latter equation as a slaved equation to determine a corrected dynamics for $r+\eps\,\vperp/b$ (identified with the $r$ of the reduced model). A similar treatment could be applied to $\mu$.

However, we find a slightly different derivation of the second order corrections for spatial positions to be better suited to our numerical purposes. Thus we now provide some details on it. The starting point is that \eqref{def:uperp2} may be combined with
\[
\frac{\dD\,\bx}{\dD t}
\,=\,\vpar\,\eDpar+\vr\,\eDr+\vperp\,\eDperp\,,
\]
to yield 
\[
\frac{\dD\,}{\dD t}\left(\bx+\eps\,(\uperp\eDr-u_r\eDperp)\right)
\,=\,\vpar\,\eDpar\,+\,\eps\,(\Ur\,\eDr+\Uperp\,\eDperp)\,+\,\eps\,\bR\,,
\]
with $\bUperp=(\Ur,\Uperp)$ explicitly defined in \eqref{def:Uperp} and
\begin{align*}
\bR&:=\uperp\ (\bv\cdot\nabla_{\bx})\eDr\,-\,u_r\ (\bv\cdot\nabla_{\bx})\eDperp\,,
\end{align*}
which can be decomposed as $\bR = \bR_1 + \bR_2$ with
\[
\left\{
  \begin{array}{l}
    \ds\bR_1  \,=\, \bR \,-\, \left(\uperp\,\vperp\
    (\eDperp\cdot\nabla_{\bx})\eDr-u_r\,v_r\
    (\eDr\cdot\nabla_{\bx})\eDperp\right)
    \\[1.1em]
    \ds \bR_2 \,=\,\left(\uperp\,\vperp\ (\eDperp\cdot\nabla_{\bx})\eDr-u_r\,v_r\ (\eDr\cdot\nabla_{\bx})\eDperp\right).
    \end{array}\right.
\]
Let us recall that expressions that, as functions of $\bvperp$, or equivalently of $\buperp$, are either linear or quadratic and trace-free may be eliminated at main order. The term $\bR_1$ fits directly in this category whereas
\begin{align*}
\bR_2 &=-\eDpar\,\left(u_r\,v_r\,\d_r\omega+\uperp\,\vperp\,\delta\right)
+\eDperp\,\uperp\,\vperp\,\zeta
\end{align*} 
thus differs from $\mu\,(\d_r\omega+\delta)\,\eDpar+b\,\uperp^2\,\delta\,\eDperp$ by a trace-free quadratic term. Therefore for some $(\chi_{\bx},\eta_{\bx})$,
\[
\frac{\dD\,}{\dD t}\left(\bx+\eps\,(\uperp\eDr-u_r\eDperp)
+\eps^2\,\chi_{\bx}\right)
\,=\,(\vpar+\eps\,\mu\,(\d_r\omega+\delta))\,\eDpar+\eps\,(\Ur\,\eDr+(\Uperp+b\,\uperp^2\,\delta)\,\eDperp)+\eps^2\,\eta_{\bx}\,
\]
so that now we only need to extract the leading order contributions of $\Ur\,\eDr+(\Uperp+b\,\uperp^2\,\delta)\,\eDperp$. By discarding again linear and trace-free quadratic terms, on the rewriting
\[
\left\{
 \ba{ll}
\ds\Ur \,= & \ds\frac{\Eperp}{b} \,-\, \frac{\alpha}{b}\,\vpar^2\,+\, \left(\partial_r\omega +\delta \right)\,\vpar\,u_r \,+\, (\eta-\beta)\,\vpar\,\uperp\,+\,b\,\left(\lambda-
  \zeta\right)\,u_r\,\uperp \,+\, \kappa\,\left(\mu +  b\,\frac{\uperp^2-u_r^2}{2}\right),
 \\[0.9em]
\ds\Uperp \,= & \ds-\frac{\Er}{b} + 2\,\delta\,\vpar\uperp\,-\,
\zeta\,b\,\uperp^2 \,+\, \frac{\gamma}{b}\,\vpar^2 \,-\,\eta\, u_r\,\vpar \,-\, \kappa\,u_r\,\uperp
\,-\, \lambda\,\left(\mu -  b\,\frac{\uperp^2-u_r^2}{2}\right),
\ea\right.
\]
obtained from
\begin{align*}
u_r^2&\,=\,\frac{\mu}{b}-\frac{\uperp^2-u_r^2}{2}\,,&
\uperp^2&\,=\,\frac{\mu}{b}+\frac{\uperp^2-u_r^2}{2}\,,
\end{align*}
we see that the expected main contribution of $(\Ur\,\eDr+(\Uperp+b\,\uperp^2\,\delta)\,\eDperp)$ in the regime $\eps\ll 1$ is $\overline{U}_r\,\eDr+\overline{U}_\perp\,\eDperp$ with 
\begin{align*}
\overline{U}_r&:=\Eperp\,-\, {\alpha}\,\vpar^2 \,+\, \kappa\,b\mu\,,&
\overline{U}_\perp&:=-\Er\,+\, {\gamma}\,\vpar^2\,-\, \lambda\,b\mu\,.
\end{align*}
More explicitly, for some $(\chi_{\bx},\eta_{\bx})$ (different form the above ones)
\[
\frac{\dD\,}{\dD t}\left(\bx+\eps\,(\uperp\eDr-u_r\eDperp)
+\eps^2\,\chi_{\bx}\right)
\,=\,(\vpar+\eps\,\mu\,(\d_r\omega+\delta))\,\eDpar+\eps\,(\overline{U}_r\,\eDr+\overline{U}_\perp\,\eDperp)+\eps^2\,\eta_{\bx}\,.
\]
This recovers in particular the expression for the main contribution in the direction $r$,
\[
\frac{\dD\,}{\dD t}\left(r+\eps\,\frac{\vperp}{b}+\eps^2\,\chi_r\right)
\,=\,\eps\,\overline{U}_r\,+\,\eps^2\,\eta_r\,.
\]

The main upshot of the latter considerations is that we want to enforce that, in a suitable sense, $(\vr\eDr+\vperp\eDperp)/\eps$ converges to a vector of the form
\begin{equation}\label{extra}
-\frac{\dD\,}{\dD t}\left(\uperp\eDr-u_r\eDperp\right)\,+\,*\,\eDpar\,+\,\overline{U}_r\,\eDr\,+\,\overline{U}_\perp\,\eDperp
\end{equation}
 with $(\overline{U}_r,\overline{U}_\perp)$ as above, and $*$ some irrelevant coefficient that we do not try to recover accurately at the numerical level (when using coarse meshes). Note that focusing only on a better reconstruction of $r$ would only require to capture the shape 
 \[
-\frac{\dD\,}{\dD t}\left(\uperp\eDr\,+\,*\eDperp\right)\,+\,**\,\eDpar\,+\,\overline{U}_r\,\eDr\,+\,***\,\eDperp
\]
for some $*$, $**$ and $***$. The approach we choose to implement at the numerical level, that enforces the convergence to \eqref{extra}, improves the capture of drifts in all directions perpendicular to $\bB$, hence is expected to be more robust to coordinatization. Numerical comparisons, not reproduced here, show that this choice brings a dramatic improvement in numerical accuracy.

\begin{remark}
For the sake of comparison with both \cite{FR:JEP} and the classical gyro-kinetic theory, let us comment on the kind of drifts arising from $\bUperp$ (defined in \eqref{def:Uperp}). One immediately identifies the electric drift $\EcB$ given by
\[
\EcB \ds\,:=\, \frac{\bJ_0\,\bEperp}{b}\,=\,\frac{\bE \wedge \bB}{\|\bB\|^2}\,.
\]
As already mentioned, the term
\[
\curvB\,:=\,
\frac{\bJ_0\bCperp}{b}
\]
contains curvature effects, whereas, applying the uncoupling strategy of \cite{FR:JEP}, expounded in the foregoing subsection, to the study of 
\[
\gradB \,:=\, \frac{\bJ_0\,\bGperp}{b}\,.
\]
suggests that it converges, in a suitable sense, to
\[ \frac{\mu}{b}\begin{pmatrix}\ds
  \frac{\cos(\omega)}{r}\partial_\theta b \\[0.9em] \partial_r b \end{pmatrix} 
  \,=\,\frac{\mu}{b^2}\,\nabla b\wedge \bB
\]
when $\eps$ goes to zero.
\end{remark}

As already mentioned we could perform a similar analysis for $\mu$. Yet, firstly we believe that it is less physically significant than the capture of drifts; secondly since we are not enforcing a reconstruction of other variables up to $\cO(\eps^2)$ errors this would be hardly compatible with the conservation of a total energy $\vpar^2/2+b\mu +\phi$ --- even up to $\cO(\eps^2)$ errors --- when $\bE$ derives from a potential $\phi$. Implicitly, we choose here to prioritize the latter.

\section{A particle method for axi-symmetric strongly magnetized plasmas}
\setcounter{equation}{0}
\label{sec:3}

We now turn to the introduction of numerical schemes for the particle evolution. We refer the reader to \cite{FR1} for a brief description and thorougher references on how this fits in a complete PIC code.

As in \cite{BFR:15,FR1,FR2,FR3}, we want to apply semi-implicit schemes. This requires a preliminary identification of stiff and slow terms in the evolution. As in \cite{FR2}, we apply this strategy to an augmented formulation of the original equations of motion \eqref{eq:xv}, where, at the discrete level, the evolutions of $b\,\mu=\|\bvperp\|^2/2$ and $\bvperp$ are allowed to be uncoupled when the oscillations of $\bvperp$ are too fast to be captured by the coarse time discretization. 

Our first task is to design a suitable augmented formulation. However a natural choice for the latter stems readily from considerations of the foregoing section. Incidentally, we point out that many other choices would do a reasonable job and that the choice made in \cite{FR2} was indeed much more artificial. 

\subsection{Augmented formulation}

With variables $\bZ =(r,\varphi,\theta,\vpar,b\,\mu)$ and $\buperp=(u_r,\uperp)$ we consider
\be
\label{eq:1}
\left\{
  \ba{rl}
\ds\frac{\dD \bZ}{\dD t} &\ds\,=\, \bF(t,\bZ,\buperp),
\\[0.9em]
\ds
\frac{\dD }{\dD t}\left(\bJ_0\buperp\right)& \ds\,=\, \bU_\perp(t,\bZ,\buperp) \,-\, \frac{b(\bZ)\,\buperp}{\eps}, 
\ea\right.\ee
where  $\bF$ is given by
\begin{align}
\label{eq:num1}
\bF(t,\bZ,\buperp)
&\,:=\,
\bp
b\,u_r
\\[0.9em]
\ds\frac{\cos(\omega)\,\vpar \,+\,b\sin(\omega) \,\uperp}{R}
\\[0.9em]
\ds\frac{\sin(\omega)\,\vpar\,-\, b\cos(\omega) \,\uperp }{r}
\\[0.9em]
\ds \Epar + \Fpar
\\[0.9em]
\ds  \,-\, \vpar\,\Fpar \,+\,b\,\langle \bEperp,\,\buperp\rangle
  \ep\,,
\end{align}
with
\be
 \label{eq:num2}
\Fpar(t,\bZ,\buperp)\,:=\, b\,\left[\left(\gamma \,\,u_r \,+\, \alpha\,\uperp\right) \,\vpar\,+\, b\left(\delta-\partial_r\omega\right)\,\uperp\,u_r \,+\,
\beta \;\left( \mu +b\,\frac{\uperp^2 - u_r^2}{2} \right)\right],
\ee
and $\bUperp(t,\bZ,\buperp)=(\Ur,\Uperp)(t,\bZ,\buperp)$ is now given by
\be
\label{eq:num4}
 \left\{
 \ba{ll}
\ds\Ur \,:= & \ds\frac{\Eperp}{b} \,-\, \frac{\alpha}{b}\,\vpar^2\,+\, \left(\partial_r\omega +\delta \right)\,\vpar\,u_r \,+\, (\eta-\beta)\,\vpar\,\uperp\,+\,b\,\left(\lambda-
  \zeta\right)\,u_r\,\uperp 
\\[0.9em]
& \ds\,+\, \kappa\,\left(\mu +  b\,\frac{\uperp^2-u_r^2}{2}\right),
 \\[0.9em]
\ds\Uperp \,:= & \ds-\frac{\Er}{b} + 2\,\delta\,\vpar\uperp\,-\,
\zeta\,b\,\uperp^2 \,+\, \frac{\gamma}{b}\,\vpar^2 \,-\,\eta\, u_r\,\vpar \,-\, \kappa\,u_r\,\uperp
\\[0.9em]
& \ds\,-\, \lambda\,\left(\frac{b\mu}{b} -  b\,\frac{\uperp^2-u_r^2}{2}\right).
\ea\right.
\ee

System~\eqref{eq:1} is indeed an augmented formulation of \eqref{eq:xv} in the sense that if a solution to \eqref{eq:1} satisfies the constraint $(b\mu)/b^2=\|\uperp\|^2$ at some time it satisfies this constraint at any time and that in this case, on the associated constrained manifold, System~\ref{eq:1} reduces to \eqref{eq:xv} (in suitable coordinates). Yet a key point of our schemes, introduced below, is that they do not maintain the constraint in regimes where $\eps$ is much smaller than time steps, but instead they damp the oscillating $\uperp$ while keeping at order $1$ the slow $(b\mu)/b^2$. The latter provides consistency with the first order reduced model \eqref{order:1} since System~\eqref{order:1} is equivalently written as 
\[
\frac{\dD \bZ}{\dD t}\,=\, \bF(t,\bZ,0)\,.
\]

Before introducing numerical schemes for System~\eqref{eq:1}, in order to discuss second order properties, let us consider the effective second order system 
\be
\label{order:2}
\frac{\dD \bZ}{\dD t}\,=\, \bF\left(t,\bZ,\frac{\eps}{b}\,\overline{\bU}_\perp(t,\bZ)\right)
\ee
with $\overline{\bU}_\perp(t,\bZ)=(\overline{U}_r,\overline{U}_\perp)(t,\bZ)$ given by
\begin{align*}
\overline{U}_r&:=\Eperp\,-\, \alpha\,\vpar^2 \,+\, \kappa\,b\mu\,,&
\overline{U}_\perp&:=-\Er\,+\, \gamma\,\vpar^2\,-\, \lambda\,b\mu\,.
\end{align*}
Note that $\overline{\bU}_\perp(t,\bZ)=\bUperp(t,\bZ,0)$. For the sake of concreteness and to facilitate later computations, we point out that System~\eqref{order:2} is more explicitly written as
\[
\left\{
  \ba{l}
 \ds\frac{\dD r}{\dD t}
\,=\, \eps\,\overline{U}_r
 \\[0.9em]
\ds\frac{\dD \varphi}{\dD t} \,=\, \frac{\cos(\omega)\,\vpar \,+\, \eps\,\sin(\omega)\,\overline{U}_\perp}{R}\,,
\\[0.9em]
 \ds\frac{\dD \theta}{\dD t} \,=\, \frac{\sin(\omega)\,\vpar\,-\,\eps\, \cos(\omega)\,\overline{U}_\perp}{r}\,,
 \\[0.9em]
 \ds\frac{\dD \vpar}{\dD t} \,=\, \Epar \,+\,\beta\,b\mu
 \,+\,\eps\,\left(\gamma\,\overline{U}_r +\alpha\,\overline{U}_\perp    \right) \,\vpar
 \\[0.9em]
 \ds\frac{\dD b\mu}{\dD t} \,=\, 
 \,-\,\vpar\,\left(\beta\,b\mu
 \,+\,\eps\,\left(\gamma\,\overline{U}_r +\alpha\,\overline{U}_\perp
 \right) \,\vpar\right) \,+\, \eps\,\langle \bEperp, \,\overline{\bU}_\perp\rangle\,.
 \ea\right.
\]
We strongly emphasize that System~\eqref{order:2} does not provide a second order reduced dynamics for the original system \eqref{eq:xv} in the regime $\eps\ll 1$, but its suitable discretizations shall provide numerical solutions $\cO(\eps^2)$ close to solutions of our numerical schemes in the regime when $\eps$ is much smaller than discretization parameters. Roughly speaking, we use it to analyze the limit $\eps\to0$ of our schemes in the same way as standard effective ODEs or PDEs are used to analyze schemes in the limit when discretization steps go to zero.

With this in mind, let us observe that by design System~\eqref{order:2} reproduces accurately the expected drift in the $r$ direction. Moreover, though this was not explicitly part of our initial concern, it does preserve total energy, as shown in the following proposition. In the forthcoming subsections, we shall use comparison with System~\eqref{order:2} as a way to validate these properties. To be more precise on the latter, we shall prove below that solutions to our schemes of order $m$ are $\eps^2$-close to the solutions of a numerical scheme of order $m$ for System~\eqref{order:2}, which implies, together with the following proposition, that at the discrete level the total energy is conserved at least at order $\eps^2+(\Delta t)^m$ even with coarse meshes.

\begin{proposition}
\label{prop:cons2}
Assume that $\bE=-\nabla_\bx\phi$ with $\phi$ not depending on time.
Then solutions to the effective model \eqref{order:2} satisfy the conservation of energy
\[
\frac{\dD}{\dD t} \left(\frac{\vpar^2}{2}+b\mu +\phi\right)\,=\,0\,.
\]
\end{proposition}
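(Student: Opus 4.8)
The plan is to compute $\frac{\dD}{\dD t}\left(\frac{\vpar^2}{2}+b\mu+\phi\right)$ directly along solutions of the effective system \eqref{order:2}, exploiting the explicit form of that system written out just above the statement. First I would use the $\vpar$-equation and the $b\mu$-equation from that explicit form: adding $\vpar\,\frac{\dD\vpar}{\dD t}$ to $\frac{\dD b\mu}{\dD t}$, the term $\vpar\left(\beta\,b\mu+\eps(\gamma\overline{U}_r+\alpha\overline{U}_\perp)\vpar\right)$ appears with opposite signs and cancels exactly, leaving
\[
\frac{\dD}{\dD t}\left(\frac{\vpar^2}{2}+b\mu\right)\,=\,\Epar\,\vpar\,+\,\eps\,\langle\bEperp,\overline{\bU}_\perp\rangle\,.
\]
So the identity to prove reduces to showing that this right-hand side equals $-\frac{\dD\phi}{\dD t}$ along the flow.

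Next I would expand $\frac{\dD\phi}{\dD t}$. Since $\phi$ does not depend on $t$, $\frac{\dD\phi}{\dD t}=\partial_r\phi\,\frac{\dD r}{\dD t}+\partial_\varphi\phi\,\frac{\dD\varphi}{\dD t}+\partial_\theta\phi\,\frac{\dD\theta}{\dD t}$. I would substitute the three spatial equations from the explicit form of \eqref{order:2}, and also rewrite $\Epar=\langle-\nabla\phi,\eDpar\rangle=-\cos(\omega)\frac{\partial_\varphi\phi}{R}-\sin(\omega)\frac{\partial_\theta\phi}{r}$ and, analogously, $\Er=-\partial_r\phi$ together with $\Eperp=\langle-\nabla\phi,\eDperp\rangle=-\sin(\omega)\frac{\partial_\varphi\phi}{R}+\cos(\omega)\frac{\partial_\theta\phi}{r}$ (reading off the decomposition of $\nabla_\bx$ in the frame $(\eDr,\eDperp,\eDpar)$, exactly as in the proof of Proposition~\ref{prop:cons1}; here $\partial_r$ denotes the derivative along $\eDr$, and the $\varphi$- and $\theta$-derivatives come with the metric factors $1/R$ and $1/r$). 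Then $\Epar\,\vpar$ accounts for the $\vpar$-parts of $\partial_\varphi\phi\,\frac{\dD\varphi}{\dD t}$ and $\partial_\theta\phi\,\frac{\dD\theta}{\dD t}$, so what remains to match is
\[
\eps\,\langle\bEperp,\overline{\bU}_\perp\rangle\,+\,\partial_r\phi\,(\eps\,\overline{U}_r)\,+\,\partial_\varphi\phi\,\frac{\eps\sin(\omega)\overline{U}_\perp}{R}\,-\,\partial_\theta\phi\,\frac{\eps\cos(\omega)\overline{U}_\perp}{r}\,=\,0\,.
\]
Dividing by $\eps$ and using $\langle\bEperp,\overline{\bU}_\perp\rangle=\Er\,\overline{U}_r+\Eperp\,\overline{U}_\perp$ with $\Er=-\partial_r\phi$, the $\overline{U}_r$ terms cancel; for the $\overline{U}_\perp$ terms one needs $\Eperp=-\sin(\omega)\frac{\partial_\varphi\phi}{R}+\cos(\omega)\frac{\partial_\theta\phi}{r}$, which is precisely the expansion noted above, so those cancel as well. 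This closes the computation.

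The only genuinely delicate point is bookkeeping the frame derivatives correctly: one must be careful that in System~\eqref{order:2} the symbol $\partial_r$ in $\overline{U}_\perp$, in $E_r$, and in $\frac{\dD\phi}{\dD t}$ all refer to differentiation along $\eDr$ with the same metric normalization, and that the angular derivatives $\partial_\varphi,\partial_\theta$ carry their metric factors $1/R$, $1/r$ consistently with the definitions of $E_\varphi$, $E_\theta$ earlier in Section~\ref{sec:2}. Once the identities $\Epar=-\cos(\omega)\partial_\varphi\phi/R-\sin(\omega)\partial_\theta\phi/r$ and $\Eperp=-\sin(\omega)\partial_\varphi\phi/R+\cos(\omega)\partial_\theta\phi/r$ are in place — they follow immediately from \eqref{hyp:1}, \eqref{hyp:2} and $\bE=-\nabla_\bx\phi$ — everything is a routine cancellation, with no use of the Gauss law (in contrast to the $\mu$-conservation in Proposition~\ref{prop:cons1}). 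I would present the argument in the compact order above: reduce to $\frac{\dD}{\dD t}(\vpar^2/2+b\mu)=\Epar\vpar+\eps\langle\bEperp,\overline{\bU}_\perp\rangle$, then show the right-hand side is $-\frac{\dD\phi}{\dD t}$.
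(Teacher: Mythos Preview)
Your proposal is correct and follows essentially the same approach as the paper's own proof: both first reduce $\frac{\dD}{\dD t}(\vpar^2/2+b\mu)$ to $\Epar\vpar+\eps\langle\bEperp,\overline{\bU}_\perp\rangle$ via the explicit form of System~\eqref{order:2}, then expand $\frac{\dD\phi}{\dD t}$ by the chain rule and use the identities $\Er=-\d_r\phi$, $\Eperp=\cos(\omega)\d_\theta\phi/r-\sin(\omega)\d_\varphi\phi/R$, $\Epar=-\cos(\omega)\d_\varphi\phi/R-\sin(\omega)\d_\theta\phi/r$ to obtain the cancellation. Your observation that the Gauss law plays no role here is also in agreement with the paper.
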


\begin{proof}
We follow the lines of the proof of Proposition \ref{prop:cons1}
  with the additional terms of order $\eps$. The evolution of
  the total energy obeys
\begin{equation*}
\frac{\dD}{\dD t} \left(\frac{\vpar^2}{2}\,+\,b\,\mu\,+\,\phi\right)
\,=\,  \Epar\,\vpar \,+\,\eps\,\bEperp\cdot\overline{\bU}_\perp\,+\,\left( \d_t\phi  \,+\,\partial_r \phi
    \,\frac{\dD r}{\dD t} \,+\,\partial_\varphi \phi
    \,\frac{\dD \varphi}{\dD t}  
    \,+\,\partial_\theta \phi
      \,\frac{\dD \theta}{\dD t} \right).
\end{equation*}
Now using the equations on $(r,\varphi,\theta)$ from System~\eqref{order:2} and that
\begin{align*}
\Er&=-\d_r\phi\,,&
\Eperp&=\frac{\cos(\omega)}{r}\d_\theta\phi
-\frac{\sin(\omega)}{R}\d_\varphi\phi\,,&
 \Epar&\,=\, - \cos(\omega)\,
 \frac{\partial_\varphi \phi}{R}  - \sin(\omega)\,  \frac{\partial_\theta \phi}{r}\,,
\end{align*}
we get that
\begin{align*}
\frac{\dD}{\dD t} \left(\frac{\vpar^2}{2}\,+\,b\,\mu\,+\,\phi\right)
  &\,=\,
\d_t\phi+\eps\,\left(\bEperp\cdot\overline{\bU}_\perp
\,+\,\partial_r \phi\,\overline{U}_r
\,+\,\partial_\varphi \phi\,\frac{\sin(\omega)\,\overline{U}_\perp}{R} 
\,-\,\partial_\theta \phi\,\frac{\cos(\omega)\,\overline{U}_\perp}{r}\right)\,,
  \\
  & \,=\,\d_t\phi\,,
\end{align*}
which  reduces to the claimed conservation law when $\d_t\phi\equiv0$. 
\end{proof}

\medskip

We now describe various semi-implicit numerical schemes for System~\eqref{eq:1}.

\subsection{A first-order semi-implicit scheme}

We begin with the simplest semi-implicit scheme for \eqref{eq:1}, which is a combination of the backward and forward Euler schemes. For a fixed time step $\Delta t>0$ it is given by
\begin{subequations}\label{scheme:0}
\begin{empheq}[left=\empheqlbrace]{align}
&\ds\frac{\bZ^{n+1} - \bZ^n }{\Delta t} \,\,=\,  \bF\left(t^n, \bZ^n,\buperp^{n+1}\right),
\\[0.9em]
&\ds\bJ_0\frac{\buperp^{n+1} - \buperp^n }{\Delta t} \,\,=\,
  \bUperp(t^n,\bZ^n,\buperp^n) \,-\,  \frac{b(\bZ^n)\,\buperp^{n+1}}{\eps}\,.
  \label{scheme:0v}
\end{empheq}
\end{subequations}
Notice that only the second equation on $\buperp^{n+1}$ is really
implicit and it only requires the resolution of a two-dimensional
linear system. Then, once the value of $\buperp^{n+1}$ has been
computed the first equation provides explicitly the values of $\bZ^{n+1}$.

We recall that implicitly throughout the analysis we assume that fields are boundedly smooth and that $b$ does not vanish. Likewise we implicitly assume everywhere that $\eps$ is upper bounded, say by $1$.

\begin{proposition}[Consistency in the limit $\eps\rightarrow 0$ for a fixed $\Delta t$]
\label{prop:1}
Let us consider a time step $\Delta t>0$, a final time $T>0$  and
set $N_T:=\lfloor T/\Delta t\rfloor$. 
\begin{enumerate}[(i)]
\item Assume that $(\bZ^0(\eps),\buperp^0(\eps))$ is such that $\left(\bZ^0(\eps),\sqrt{\eps}\buperp^0(\eps)\right)_{\eps>0}$ converges in the limit $\eps\rightarrow 0$ to $(\bZ^0,0)$ for some $\bZ^0$. Consider $(\bZ^n(\eps),\buperp^n(\eps))_{0\leq
  n\leq N_T}$, the sequence obtained from  $(\bZ^0(\eps),\buperp^0(\eps))$  by \eqref{scheme:0}.\\
Then, for any $1\leq n\leq N_T$, $(\bZ^n(\eps),\buperp^n(\eps))_{\eps>0}$ converges to $(\bZ^n,0)$ as $\eps\rightarrow 0$ where
\begin{equation}
  \label{sch:y0}
  \left\{\ba{l}
\ds\frac{\bZ^{n+1} - \bZ^n}{\Delta t} \,=\, \bF\left(t^n,\bZ^n,0\right),\qquad
0\leq n\leq N_T-1\,,
\\[1.1em]
\ds\bZ^0 = \bZ^0,
\ea\right.
\end{equation} 
which provides a consistent first-order approximation with respect to $\Delta t$ of the
gyro-kinetic system~\eqref{order:1}.
\item Alternatively make the stronger assumption that for some $M>0$, $(\bZ^0(\eps),\buperp^0(\eps))$ is such that for some $\bZ^0$, $\left((\bZ^0(\eps)-\bZ^0)/\eps,\buperp^0(\eps)\right)_{\eps>0}$ is bounded by $M$ uniformly with respect to $\eps>0$.\\
Then, for some $C_M$ depending only on $M$, $\Delta t$ and $T$, for any $1\leq n\leq N_T$,
\[
\|\bZ^{n}(\eps)-\bZ^{n}\|
\,+\,\|\buperp^{n}(\eps)\|
\leq C_M\,\eps\,,
\]
where $(\bZ^n(\eps))_{0\leq
  n\leq N_T}$ is obtained from \eqref{sch:y0}, and, for any $2\leq n\leq N_T$,
\[
\|\bZ^{n}(\eps)-\bY^{n}(\eps) \|
\,+\,\left\|\buperp^{n}(\eps)-\eps\frac{\overline{\bU}_\perp(t^{n-1},\bY^{n-1}(\eps))}{b(\bY^{n-1}(\eps))}\right\| \,\leq\, C_M\, \eps^2
\]
where $(\bY^n(\eps))_{1\leq n\leq N_T}$ is obtained from
\begin{equation}
   \label{sch:y1}
   \left\{\ba{l}
\ds\frac{\bY^{n+1}(\eps)- \bY^n(\eps)}{\Delta t} \,=\, \bF\left(t^n,\bY^n(\eps),\eps\frac{\overline{\bU}_\perp(t^n,\bY^n(\eps))}{b(\bY^n(\eps))} \right),\qquad
1\leq n\leq N_T-1
\\[1.1em]
\bY^1(\eps)= \bZ^1(\eps),
\ea\right.
\end{equation}
which provides a consistent first-order approximation with respect to $\Delta t$ of System~\eqref{order:2}.
\end{enumerate}
\end{proposition}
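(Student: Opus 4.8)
The plan is to exploit that in \eqref{scheme:0} only the $\buperp$-update is implicit, and it is a mere $2\times2$ linear solve. Setting $a^n:=\Delta t\,b(\bZ^n)/\eps$, the matrix $a^n\,\Id+\bJ_0=\bigl(\begin{smallmatrix}a^n&1\\-1&a^n\end{smallmatrix}\bigr)$ is always invertible with $\|(a^n\,\Id+\bJ_0)^{-1}\|=(1+(a^n)^2)^{-1/2}\le 1/a^n=\eps/(\Delta t\,b(\bZ^n))$, so
\[
\buperp^{n+1}\,=\,\Bigl(\tfrac{\Delta t\,b(\bZ^n)}{\eps}\,\Id+\bJ_0\Bigr)^{-1}\bigl(\bJ_0\buperp^n+\Delta t\,\bUperp(t^n,\bZ^n,\buperp^n)\bigr)
\]
is contracted, up to a factor $\cO(\eps)$, toward its ``slaved'' value $\tfrac{\eps}{b(\bZ^n)}\bUperp(t^n,\bZ^n,\buperp^n)$, while the first line of \eqref{scheme:0} then determines $\bZ^{n+1}$ explicitly. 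Since $N_T$ is fixed, everything reduces to finite inductions on $0\le n\le N_T$; for $\eps$ small enough all iterates $\bZ^n(\eps)$ stay in a fixed compact subset $K$ of the domain (part of the induction, and in (ii) also a consequence of (i), whose hypotheses are weaker), on which $b\ge b_{\min}>0$, $\bF$ and $\bUperp$ are Lipschitz, and $\bUperp$ is polynomial of degree at most $2$ in $\buperp$; all constants below are taken relative to $K$ and depend only on $M$, $\Delta t$, $T$.

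For item (i), the hypothesis $\sqrt{\eps}\,\buperp^0(\eps)\to0$ and the at most quadratic growth of $\bUperp$ in $\buperp$ give, after one step,
\[
\|\buperp^1(\eps)\|\,\le\,\tfrac{\eps}{\Delta t\,b_{\min}}\bigl(\|\buperp^0(\eps)\|+\Delta t\,C\,(1+\|\buperp^0(\eps)\|+\|\buperp^0(\eps)\|^2)\bigr),
\]
whose right-hand side tends to $0$ since $\eps\,\|\buperp^0(\eps)\|=\sqrt{\eps}\,\bigl(\sqrt{\eps}\,\|\buperp^0(\eps)\|\bigr)\to0$ and $\eps\,\|\buperp^0(\eps)\|^2=\bigl(\sqrt{\eps}\,\|\buperp^0(\eps)\|\bigr)^2\to0$. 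For $n\ge1$, once $\buperp^n(\eps)$ is bounded one gets $\|\buperp^{n+1}(\eps)\|=\cO(\eps)\to0$; feeding $\buperp^{n+1}(\eps)\to0$ and $\bZ^n(\eps)\to\bZ^n$ into the first line of \eqref{scheme:0} with $\bF$ continuous yields, inductively, $\bZ^n(\eps)\to\bZ^n$ with $(\bZ^n)$ solving \eqref{sch:y0}. Since $\bF(t,\bZ,0)$ is, by \eqref{eq:num1}--\eqref{eq:num2}, exactly the right-hand side of \eqref{order:1}, \eqref{sch:y0} is the explicit Euler scheme for \eqref{order:1}, hence a first-order consistent approximation in $\Delta t$.

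For item (ii), the stronger hypothesis means $\buperp^0(\eps)$ is bounded by $M$ and $\bZ^0(\eps)-\bZ^0=\cO(\eps)$. A single induction on $n$ proves simultaneously $\bZ^n(\eps)\in K$, $\|\bZ^n(\eps)-\bZ^n\|=\cO(\eps)$ and, for $n\ge1$, $\|\buperp^n(\eps)\|=\cO(\eps)$: indeed $\bUperp(t^n,\bZ^n(\eps),\buperp^n(\eps))=\cO(1)$, so $\|\buperp^{n+1}(\eps)\|\le\tfrac{\eps}{\Delta t\,b_{\min}}\,\cO(1)=\cO(\eps)$, and then the Lipschitz estimate $\|\bF(t^n,\bZ^n(\eps),\buperp^{n+1}(\eps))-\bF(t^n,\bZ^n,0)\|\le L\,(\|\bZ^n(\eps)-\bZ^n\|+\|\buperp^{n+1}(\eps)\|)$ propagates the $\cO(\eps)$ bound on the $\bZ$-error via the elementary discrete Gr\"onwall inequality; this is the first claimed estimate.

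For the sharper estimate, rewrite \eqref{scheme:0v} (multiplied by $\eps$) as
\[
\Delta t\,b(\bZ^n)\,\buperp^{n+1}\,=\,\eps\,\bJ_0\buperp^n+\eps\,\Delta t\,\bUperp(t^n,\bZ^n,\buperp^n)-\eps\,\bJ_0\buperp^{n+1}.
\]
By the bound just obtained, $\eps\,\bJ_0\buperp^n$ and $\eps\,\bJ_0\buperp^{n+1}$ are $\cO(\eps^2)$, and $\bUperp(t^n,\bZ^n,\buperp^n)=\bUperp(t^n,\bZ^n,0)+\cO(\eps)=\overline{\bU}_\perp(t^n,\bZ^n(\eps))+\cO(\eps)$ by the identity $\overline{\bU}_\perp(t,\bZ)=\bUperp(t,\bZ,0)$ and Lipschitz continuity in $\buperp$, so dividing by $\Delta t\,b(\bZ^n)$,
\[
\buperp^{n+1}(\eps)\,=\,\tfrac{\eps}{b(\bZ^n(\eps))}\,\overline{\bU}_\perp(t^n,\bZ^n(\eps))+\cO(\eps^2).
\]
Substituting into the first line of \eqref{scheme:0} and using the Lipschitz continuity of $\bF$ and of $\bZ\mapsto\tfrac{\eps}{b(\bZ)}\overline{\bU}_\perp(t,\bZ)$ (the latter with an extra factor $\eps$) shows $(\bZ^n(\eps))$ obeys \eqref{sch:y1} up to a per-step defect $\cO(\eps^2)$; since $\bY^1(\eps)=\bZ^1(\eps)$, the discrete Gr\"onwall inequality gives $\|\bZ^n(\eps)-\bY^n(\eps)\|=\cO(\eps^2)$, and replacing $\bZ^{n-1}(\eps)$ by $\bY^{n-1}(\eps)$ in the expansion of $\buperp^n(\eps)$ --- an $\cO(\eps^3)$ change --- yields the stated bound on $\|\buperp^n(\eps)-\eps\,\overline{\bU}_\perp(t^{n-1},\bY^{n-1}(\eps))/b(\bY^{n-1}(\eps))\|$; finally \eqref{sch:y1} is the explicit Euler scheme for \eqref{order:2}, hence first-order consistent in $\Delta t$. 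The main obstacle is precisely this bootstrap: the $\cO(\eps^2)$ refinement in (ii) cannot be obtained without first securing the $\cO(\eps)$ a priori size of the whole sequence $\buperp^n(\eps)$, which is what permits the implicit term $-\eps\,\bJ_0\buperp^{n+1}$ to be demoted to an $\cO(\eps^2)$ remainder; running the two coupled inductions (on the $\bZ$-errors and on the $\buperp$-expansion) with constants uniform over the finitely many steps and over $K$ is where the care lies, though no tool heavier than discrete Gr\"onwall is needed, the one genuinely preliminary point being to justify, from the standing smoothness and non-vanishing assumptions, that the iterates remain in such a compact $K$.
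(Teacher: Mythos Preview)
Your proof is correct and follows essentially the same approach as the paper: invert the $2\times 2$ linear system in \eqref{scheme:0v} to exhibit the $\cO(\eps/\Delta t)$ contraction on $\buperp$, use the at-most-quadratic growth of $\bUperp$ to absorb the initial layer (the $\sqrt{\eps}\,\buperp^0\to0$ hypothesis), propagate the resulting $\cO(\eps)$ bound on $\buperp^n$ for $n\ge1$ to $\bZ^n$ by discrete Gr\"onwall, and then bootstrap by re-reading \eqref{scheme:0v} as $\buperp^{n+1}=\tfrac{\eps}{b}\overline{\bU}_\perp+\cO(\eps^2)$ once the first-order bounds are in hand. Your write-up is in fact slightly more explicit than the paper's on two points the paper leaves implicit: the justification that iterates remain in a fixed compact set $K$ (ensuring uniform Lipschitz constants and $b\ge b_{\min}>0$), and the precise operator norm $\|(a^n\Id+\bJ_0)^{-1}\|=(1+(a^n)^2)^{-1/2}$.
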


\begin{proof}
We begin by proving the first point. The key stability observation is that \eqref{scheme:0v} is alternatively written as 
\[
\buperp^{n+1}(\eps) \,=\, \left( \Id \,
 -\, b(\bZ^n(\eps))\,\frac{\Delta t}{\eps}\,\bJ_0\right)^{-1}\,\left( \buperp^n(\eps)
  \, -\, \Delta t\, \bJ_0 \bUperp(t^n,\bZ^n(\eps),\buperp^n(\eps))\right),
\]
where $\left( \Id \, -\, b(\bZ^n(\eps))\,\frac{\Delta t}{\eps}\,\bJ_0\right)^{-1}$ is well-defined and bounded by a multiple of $\eps/\Delta t$. Since $\bUperp$ is at most quadratic in $\buperp$ and $(\sqrt{\eps}\buperp^0(\eps))_{\eps>0}$ converges to $0$ as $\eps\to0$, this implies that $(\buperp^1(\eps))_{\eps>0}$ converges to $0$ as $\eps\to0$ and then arguing recursively that $(\buperp^n(\eps))_{\eps>0}$, $2\leq n\leq N_T$, is bounded uniformly by a multiple of $\eps$ (with a factor depending on $\Delta t$ and $T$). In turn, for some $C'$, for any $0\leq n \leq N_T-1$,
\begin{align*}
\ds \|\bZ^{n+1}(\eps)\,-\, \bZ^{n+1} \|
&\leq \ds (1+C'\,\Delta t)\,\|\bZ^{n}(\eps)\,-\,\bZ^{n} \|
+C'\,\Delta t \,\|\buperp^{n+1}(\eps)\|\,,
\end{align*}
so that the convergence of the $\bZ$-variable stems recursively from the one of the $\buperp$-variable.

We turn to the proof of the second point. We omit to give details on the bound on $(\bZ^{n}(\eps)-\bZ^n,\buperp^n)$ since they are redundant with the ones sketched above. We focus on second-order estimates. With first-order bounds in hands we now use \eqref{scheme:0v} in the form
\[
\buperp^{n+1}(\eps)\,=\,
-\eps\,\bJ_0\,\frac{\buperp^{n+1}(\eps) - \buperp^n(\eps) }{b\left(\bZ^n(\eps)\right)\,\Delta
  t} \,+\,  \eps\,\frac{   \bUperp\left(t^n,\bZ^n(\eps),\buperp^n(\eps)\right)}{b\left(\bZ^n(\eps)\right)}\,,
\]
so as to derive
\be\label{luft}
\left\|\buperp^{n}(\eps)-\eps\frac{\overline{\bU}_\perp(t^{n-1},\bZ^{n-1}(\eps))}{b(\bZ^{n-1}(\eps))}\right\| \,\leq\, C'\, \eps^2\,,
\qquad 2\leq n\leq N_T
\ee
for some $C'$ (depending on $\Delta t$ and $T$). To conclude, it is thus sufficient to prove the bound on $\bZ^n(\eps)-\bY^n(\eps)$. However we already have by using \eqref{luft} that for some $C''$
\[
\ds \|\bZ^{n+1}(\eps)\,-\, \bY^{n+1}(\eps)\|
\leq \ds C''\,\|\bZ^{n}(\eps)\,-\,\bY^{n} \|+C''\,\eps^2\,,\qquad 
1\leq n\leq N_T-1\,.
\]
One may then conclude the proof arguing inductively.
\end{proof}

\begin{remark}
The consistency provided by the latter result is far from being uniform with respect to the time step $\Delta t$. However, though we restrain from doing so here in order to keep technicalities to a bare minimum, we expect that an analysis similar to the one carried out in \cite{FR3} could lead to uniform estimates, proving uniform stability and  consistency with respect to both $\Delta t$ and $\eps$. 
\end{remark}

Of course, a first-order scheme may fail to be accurate enough to describe correctly the long time behavior of the solution, but it has the advantage of simplicity. In the following we show how to generalize our approach to second-order schemes. Though we do not discuss such schemes here, we also recall that our approach is compatible with even higher order schemes and we refer to \cite{FR1,FR2} for third-order examples.

\subsection{Second-order semi-implicit Runge-Kutta schemes}
We now consider a second-order scheme with two stages.  More explicitly, the scheme we introduce is a combination of a Runge-Kutta method for the explicit part and of an $L$-stable second-order SDIRK method for the implicit part.

To describe the scheme, we introduce $\gamma>0$ the smallest root of the polynomial $X^2 - 2X + 1/2$, {\it  i.e.} $\gamma = 1 - 1/\sqrt{2}$. Then the scheme is given by the
following two stages. First,
\begin{subequations}\label{scheme:3-1}
\begin{empheq}[left=\empheqlbrace]{align}
&\ds\frac{\bZ^{(1)} - \bZ^n }{\Delta t} \,=\,  \gamma\,\bF\left(t^n, \bZ^n,\buperp^{(1)}\right),
\label{scheme:3-1Z}
\\[1.1em]
&\ds\bJ_0\frac{\buperp^{(1)} - \buperp^n }{\Delta t} \,=\, \gamma\,\left[\bUperp(t^n,\bZ^n,\buperp^n) \,-\,  \frac{b\left(\bZ^n\right)\,\buperp^{(1)}}{\eps}\right]\,.
\label{scheme:3-1v}
\end{empheq}
\end{subequations}
Before the second stage, we first introduce  $\hat{t}^{(1)} \,=\, t^n + {\Delta t}/{(2\gamma)}$ and explicitly compute $(\hat{\bZ}^{(1)},\hat{\bu}_\perp^{(1)})$ from
\begin{subequations}\label{tard2}
\begin{empheq}[left=\empheqlbrace]{align}
&\ds\hat{\bZ}^{(1)} \,=\, \left( 1
  -\frac{1}{2\gamma^2}\right)\,\bZ^{n}\,+\, \frac{1}{2\gamma^2}\,
  \bZ^{(1)},
  \\[1.1em]
&\ds\hat{\bu}_\perp^{(1)} \,=\, \left( 1 -\frac{1}{2\gamma^2}\right)\,\buperp^{n}\,+\, \frac{1}{2\gamma^2}\, \buperp^{(1)}\,.
\label{tard2v}
\end{empheq}
\end{subequations}
Then  the solution of the second stage $\left(\bZ^{n+1},\buperp^{n+1}\right)$ is given by
\begin{subequations}\label{scheme:3-2}
\begin{empheq}[left=\empheqlbrace]{align}
\ds\frac{\bZ^{n+1} - \bZ^n }{\Delta t} \,=\,&\ds
  (1-\gamma)\,\bF\left(t^n, \bZ^n,\buperp^{(1)}\right) \,+\, \gamma\,\bF\left(t^n, \hat{\bZ}^{(1)},\buperp^{n+1}\right)\,,
  \label{scheme:3-2Z}
\\[1.1em]
\ds\bJ_0\frac{\buperp^{n+1} - \buperp^n }{\Delta t} \,=\,&
\ds  (1-\gamma)\,\left[ \bUperp(t^n,\bZ^n,\buperp^n) \,-\,
  \frac{b\left(\bZ^{n}\right)\,\buperp^{(1)}}{\eps}\right]
  \label{scheme:3-2v}
  \\[1.2em]\nonumber
  \,&\ds\,+\, \gamma\,\left[ \bUperp\left(\hat{t}^{(1)},\hat{\bZ}^{(1)},\hat{\bu}_\perp^{(1)}\right) \,-\,\frac{b\left(\hat{\bZ}^{(1)}\right)\,\buperp^{n+1}}{\eps}\right]\,.
\end{empheq}
\end{subequations}

The following proposition provides consistency results in the limit
$\eps\rightarrow 0$ for the foregoing scheme.

\begin{proposition}[Consistency in the
  limit $\eps\rightarrow 0$ for a fixed $\Delta t$]
  \label{prop:3}
  Let us consider a time step $\Delta t>0$, a final time $T>0$  and
set $N_T:=\lfloor T/\Delta t\rfloor$. 
\begin{enumerate}[(i)]
\item Assume that $(\bZ^0(\eps),\buperp^0(\eps))$ is such that $\left(\bZ^0(\eps),\sqrt{\eps}\buperp^0(\eps)\right)_{\eps>0}$ converges in the limit $\eps\rightarrow 0$ to $(\bZ^0,0)$ for some $\bZ^0$. Consider $(\bZ^n(\eps),\buperp^n(\eps))_{0\leq n\leq N_T}$, the sequence obtained from  $(\bZ^0(\eps),\buperp^0(\eps))$  by \eqref{scheme:3-1}-\eqref{scheme:3-2}.\\
Then, for any $1\leq n\leq N_T$, $(\bZ^n(\eps),\buperp^n(\eps))_{\eps>0}$ converges to $(\bZ^n,0)$ as $\eps\rightarrow 0$ where
  \begin{equation}
  \label{sch:y2}
  \left\{\ba{l}
\ds\frac{\hat{\bZ}^{(1)} - \bZ^n}{\Delta t} \,=\, \frac{1}{2\gamma}\,\bF\left(t^n,\bZ^n,0\right),
\\[1.1em]
\ds\frac{\bZ^{n+1} - \bZ^n}{\Delta t} \,=\, (1-\gamma)\,\bF\left(t^n,\bZ^n,0\right)\,+\, \gamma\,\bF\left(\hat{t}^{(1)},\hat{\bZ}^{(1)},0\right),
\ea\right.
\end{equation}
which provides a consistent second-order approximation with respect to $\Delta t$ of the
gyro-kinetic system~\eqref{order:1}.
\item Alternatively make the stronger assumption that for some $M>0$, $(\bZ^0(\eps),\buperp^0(\eps))$ is such that for some $\bZ^0$, $\left((\bZ^0(\eps)-\bZ^0)/\eps,\buperp^0(\eps)\right)_{\eps>0}$ is bounded by $M$ uniformly with respect to $\eps>0$.\\
Then,  for some $C_M$ depending only on $M$, $\Delta t$ and $T$, for any $1\leq n\leq N_T$,
\[
\|\bZ^{n}(\eps)-\bZ^{n}\|
\,+\,\|\buperp^{n}(\eps)\|
\leq C_M\,\eps\,,
\]
where $(\bZ^n(\eps))_{0\leq
  n\leq N_T}$ is obtained from \eqref{sch:y2}, and, for any $2\leq n\leq N_T$,
\[
\|\bZ^{n}(\eps)-\bY^{n}(\eps) \|
\,+\,\left\|\buperp^{n}(\eps)-\eps\frac{\overline{\bU}_\perp(t^{n-1},\bY^{n-1}(\eps))}{b(\bY^{n-1}(\eps))}\right\| \,\leq\, C_M\, \eps^2
\]
where $(\bY^n(\eps))_{1\leq n\leq N_T}$ is obtained from
\begin{equation}
   \label{sch:y3}
   \left\{\ba{ll}
\ds\frac{\hat{\bY}^{(1)} - \bY^n}{\Delta t} \,= & \ds \frac{1}{2\gamma}\bF\left(t^n,\bY^n,\eps  \frac{\overline{\bU}_\perp(t^n,\bY^n)}{b(\bY^n)} \right),
\\[1.1em]
\ds\frac{\hat{\bY}^{n+1} - \bY^n}{\Delta t} \,= & \ds
(1-\gamma)\,\bF\left(t^n,\bY^n,\eps
  \frac{\overline{\bU}_\perp(t^n,\bY^n)}{b(\bY^n)} \right) \\
\\[1.1em]
& \ds +\, \gamma\, \bF\left(\hat{t}^{(1)},\hat{\bY}^{(1)},\eps
  \frac{\overline{\bU}_\perp(\hat{t}^{(1)},\hat{\bY}^{(1)})}{b\left(\hat{\bY}^{(1)}\right)} \right)\,,
\ea\right.
\end{equation}
which provides a consistent second-order approximation with respect to $\Delta t$ of System~\eqref{order:2}.
\end{enumerate}
\end{proposition}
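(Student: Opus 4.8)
The plan is to mirror the proof of Proposition~\ref{prop:1}, treating the two stages \eqref{scheme:3-1}--\eqref{scheme:3-2} in turn and reducing everything to estimates on the auxiliary variable $\buperp$, which then propagate to $\bZ$ through the explicit, locally Lipschitz, updates \eqref{scheme:3-1Z}, \eqref{tard2}, \eqref{scheme:3-2Z} and a discrete Gr\"onwall argument. The first step is stability of the two implicit substeps: at frozen explicitly-evaluated data, both \eqref{scheme:3-1v} and \eqref{scheme:3-2v} are linear in the unknowns $\buperp^{(1)}$, $\buperp^{n+1}$, the stiff part amounting to inverting operators of the form $\Id-c\,\frac{\Delta t}{\eps}\bJ_0$ with $c=\gamma\,b(\cdot)$ bounded below, whose inverse is bounded by $\min\{1,C\,\eps/\Delta t\}$ uniformly in the arguments. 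Writing, e.g.,
\[
\buperp^{(1)}=\left(\Id-\gamma\,b(\bZ^n)\tfrac{\Delta t}{\eps}\bJ_0\right)^{-1}\left(\buperp^n-\gamma\,\Delta t\,\bJ_0\,\bUperp(t^n,\bZ^n,\buperp^n)\right)
\]
and similarly for $\buperp^{n+1}$, and using that $\bUperp$ is at most quadratic in $\buperp$, one gets, exactly as in Proposition~\ref{prop:1}: in case (i), $(\sqrt\eps\,\buperp^n)_{\eps>0}$ stays bounded and $\buperp^{(1)},\hat\bu_\perp^{(1)},\buperp^{n+1}\to0$ at the first step, hence by induction $\buperp^n(\eps)\to0$ for all $1\le n\le N_T$; in case (ii), $\buperp^{(1)},\buperp^{n+1}=\cO(\eps)$ and, inductively, $\buperp^n(\eps)=\cO(\eps)$ with $(\bZ^n(\eps)-\bZ^0)/\eps$ bounded.

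Granted this control, the explicit updates transfer the $\buperp$-estimates to $\bZ$ by discrete Gr\"onwall. In case (i) one obtains $\bZ^n(\eps)\to\bZ^n$ with $(\bZ^n)$ solving \eqref{sch:y2}; setting $G(t,\bZ):=\bF(t,\bZ,0)$ this is the two-stage explicit Runge--Kutta method with nodes $(0,\tfrac1{2\gamma})$ and weights $(1-\gamma,\gamma)$, whose order-two conditions $\sum b_i=1$ and $\sum b_ic_i=\tfrac12$ hold, so it is a consistent second-order approximation of \eqref{order:1}. In case (ii) the same Gr\"onwall argument gives $\|\bZ^n(\eps)-\bZ^n\|+\|\buperp^n(\eps)\|\le C_M\,\eps$ for $1\le n\le N_T$.

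The only genuinely new ingredient is the second-order refinement. Rewrite \eqref{scheme:3-1v} as $b(\bZ^n)\buperp^{(1)}=\eps\,\bUperp(t^n,\bZ^n,\buperp^n)-\tfrac{\eps}{\gamma}\bJ_0\,\tfrac{\buperp^{(1)}-\buperp^n}{\Delta t}$, substitute it into the $(1-\gamma)$-bracket of \eqref{scheme:3-2v}, and observe that the two occurrences of $(1-\gamma)\,\bUperp(t^n,\bZ^n,\buperp^n)$ cancel. Solving the remaining equation for $\buperp^{n+1}$, and using that $\buperp^n,\buperp^{(1)},\hat\bu_\perp^{(1)},\buperp^{n+1}$ are all $\cO(\eps)$ together with $\bUperp(\cdot,\cdot,0)=\overline{\bU}_\perp$, one obtains
\[
\buperp^{(1)}(\eps)=\eps\,\frac{\overline{\bU}_\perp(t^n,\bZ^n)}{b(\bZ^n)}+\cO(\eps^2),
\qquad
\buperp^{n+1}(\eps)=\eps\,\frac{\overline{\bU}_\perp\!\left(\hat t^{(1)},\hat\bZ^{(1)}\right)}{b\!\left(\hat\bZ^{(1)}\right)}+\cO(\eps^2),
\]
i.e. $\buperp$ tracks the effective drift $\eps\,\overline{\bU}_\perp/b$ evaluated precisely at the stage quadrature points appearing in \eqref{sch:y3}. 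Feeding these identities into \eqref{scheme:3-1Z}, \eqref{tard2}, \eqref{scheme:3-2Z} and comparing stage by stage with \eqref{sch:y3}, the $\cO(\eps^2)$ discrepancies enter linearly, and a final discrete Gr\"onwall gives $\|\bZ^n(\eps)-\bY^n(\eps)\|\le C_M\,\eps^2$; combined with the inductive hypothesis $\bZ^{n-1}(\eps)-\bY^{n-1}(\eps)=\cO(\eps^2)$ this also yields the stated bound on $\buperp^n(\eps)$ minus the reference stiff-limit value. Consistency of order two of \eqref{sch:y3} for \eqref{order:2} follows from the same Butcher conditions as above, now with right-hand side $\bZ\mapsto\bF(t,\bZ,\tfrac{\eps}{b(\bZ)}\overline{\bU}_\perp(t,\bZ))$.

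The main obstacle is this last refinement step, and specifically the cancellation in \eqref{scheme:3-2v}: without exploiting it, the coupling contribution $(1-\gamma)\,b(\bZ^n)\buperp^{(1)}/\eps$, which is only $\cO(1)$ and not $\cO(\eps)$, would prevent $\buperp^{n+1}$ from approximating $\eps\,\overline{\bU}_\perp/b$ to $\cO(\eps^2)$ accuracy, and the effective second-order system \eqref{order:2} would not be recovered. Everything else is a transcription of the first-order argument; the additional $\Delta t$-dependence of the constants introduced by the two substeps is harmless since $\Delta t$ is fixed throughout.
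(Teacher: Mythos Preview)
Your proposal is correct and follows the same approach as the paper, which simply declares that the argument mirrors Proposition~\ref{prop:1} and then sketches how the $\buperp$-bounds propagate through the two stages over the first few steps. One minor slip: at the very first step ($n=0$) in case~(i), $\hat\bu_\perp^{(1)}$ need not itself converge to zero---only $\sqrt{\eps}\,\hat\bu_\perp^{(1)}$ does, since $\hat\bu_\perp^{(1)}$ retains a contribution from $\buperp^0$ through \eqref{tard2v}---but this is harmless, because the at-most-quadratic growth of $\bUperp$ combined with the $\cO(\eps/\Delta t)$ bound on the implicit inverse still forces $\buperp^{1}\to0$.
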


\begin{proof}
The proof mainly follows the lines of the proof of Proposition~\ref{prop:1} and thus we only sketch a few details about the proof of the first point. To make arguments more precise we mark with a suffix ${}_{n}$ intermediate quantities involved in the step from $t^n$ to $t^{n+1}$.

From \eqref{scheme:3-1v} we deduce that  $(\bu^{(1)}_{\perp,0}(\eps))_\eps$ converges to zero, then from \eqref{tard2v} we derive that
$(\sqrt{\eps}\hat{\bu}^{(1)}_{\perp,0}(\eps))_\eps$ converges to zero and finally from \eqref{scheme:3-2v} that $(\bu^{1}_{\perp}(\eps))_\eps$ converges to zero. Likewise we infer that $(\bu^{(1)}_{\perp,1}(\eps)/\eps)_\eps$ is bounded, $(\hat{\bu}^{(2)}_{\perp,0}(\eps))_\eps$ converges to zero and $(\bu^{2}_{\perp}(\eps)/\eps)_\eps$ is bounded. At last, we deduce that for $2\leq n\leq N_T-1$, $(\bu^{(1)}_{\perp,n}(\eps)/\eps)_\eps$, $(\hat{\bu}^{(2)}_{\perp,0}(\eps)/\eps)_\eps$ and $(\bu^{n+1}_{\perp}(\eps)/\eps)_\eps$ are bounded. Inserting these bounds in \eqref{scheme:3-1Z} and \eqref{scheme:3-2Z} is sufficient to achieve the proof of the first point.
\end{proof}

\section{Numerical simulations}
\label{sec:4}
\setcounter{equation}{0}

In this section, we provide examples of numerical computations to validate and compare
the different time discretization schemes introduced in the previous section. We only consider the motion of individual particles under the effect of given
electromagnetic fields and investigate on it the accuracy and stability properties with respect to $\eps>0$ of the semi-implicit algorithms presented in Section~\ref{sec:3}. It allows us to  illustrate the ability of the semi-implicit schemes to capture in the limit $\varepsilon\rightarrow 0$ drift velocities due to variations of magnetic and electric fields, even with large time steps $\Delta t $ .

\subsection{Particle motion without electric field}
In this first subsection, numerical experiments are run with a zero electric field $\bE\equiv0$, and a time-independent external magnetic field $\bB$ corresponding to 
\[
\bB(r,\theta,\varphi)= B_\varphi(r,\theta)\,\eDp(\varphi)\,+\,
B_\theta(r,\theta)\,\eDt(\theta,\varphi)\,,
\]
where 
\[
B_\theta(r,\theta):=\frac{B_1\,r}{R}= \frac{B_1\,r}{R_0+ r\cos(\theta)}   \qquad
B_\varphi(r,\theta):=\frac{B_0}{R}=\frac{B_0}{R_0+ r\cos(\theta)},
\]
with $B_0=50$ and $B_1=10$. Observe that this choice guarantees that the magnetic field $\bB$ is divergence-free. We set torus radius to be $R_0=7/4$ and choose for all simulations the initial data so that at $t=0$ we have $r(0)=3/2$, $\theta(0)=\pi/6$ and $\varphi(0)=\pi/8$ whereas for the velocity we choose $\bV(0)=(10,10,5)$.

First, for comparison, we compute a reference solution $(\bX(\eps),\bV(\eps))_{\eps>0}$
to the initial problem \eqref{eq:xv2} up to a final time $T=0.5$ thanks to an explicit
fourth-order Runge-Kutta scheme used with a very small time step
$\Delta t=10^{-8}$ and a reference solution $\bY(\eps)$ to the (non
stiff) asymptotic model \eqref{order:2} obtained when $\eps
\ll 1$. Then we compute an approximate solution
$(\bX_{\Delta t}(\eps), \bV_{\Delta t}(\eps)):=(\bX_{\Delta t}^n(\eps), \bV^n_{\Delta t}(\eps))_{0\leq n\leq N_T}$  using \eqref{scheme:3-1}--\eqref{scheme:3-2} and a classical Boris scheme \cite{boris}, and we compare them to the reference solutions. Our goal is to evaluate the accuracy of the numerical solution $(\bX_{\Delta t}(\eps), \bV_{\Delta t}(\eps))$ for various regimes when both $\eps$ and $\Delta t$ vary, errors being measured on spatial positions in discrete $L^\infty$ norms
\[
\| \bX_{\Delta t}(\eps) - \bX(\eps) \| \,:=\,  \max_{n\in\{0,..,N_T\}}  \|\bX^{n}_{\Delta t}(\eps) - \bX(\eps)(t^n) \|\,.
\]

In Figure~\ref{fig:1}, we present the numerical error expressed with respect to the time step $\Delta t$ obtained using \eqref{scheme:3-1}--\eqref{scheme:3-2} and the Boris
scheme \cite{boris}. On the one hand, as expected, for a fixed  $\eps$ taken here between $10^{-5}$ and $10^{-1}$) and $\Delta t$ is small,  both schemes are quite accurate, but the amplitude of the error obtained using \eqref{scheme:3-1}--\eqref{scheme:3-2}
is much smaller than the one obtained using the Boris scheme when $\Delta t$ is significantly smaller than $\eps$. Accuracies are comparable only for some intermediate regimes. Indeed, it is also visible in Figure~\ref{fig:1}, that in the opposite regime, when $\Delta t \simeq 10^{-2}$ is fixed and $\eps$ is sent to zero, the error decreases with respect to $\eps$ when the IMEX scheme \eqref{scheme:3-1}--\eqref{scheme:3-2} is applied whereas it saturates with the Boris scheme. 
 
\begin{figure}[H]
\begin{center}
 \begin{tabular}{cc}
\includegraphics[width=7.75cm]{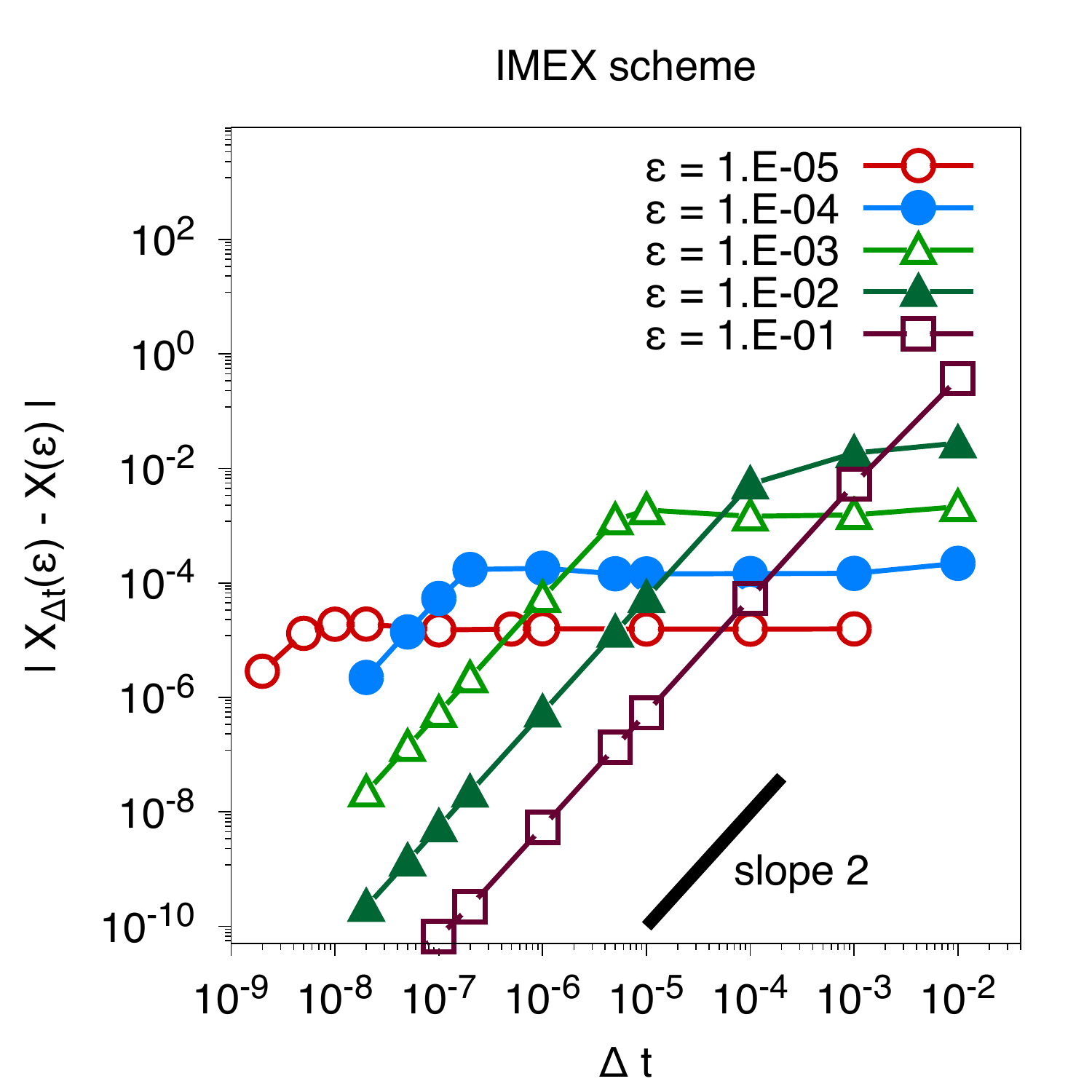} &    
\includegraphics[width=7.75cm]{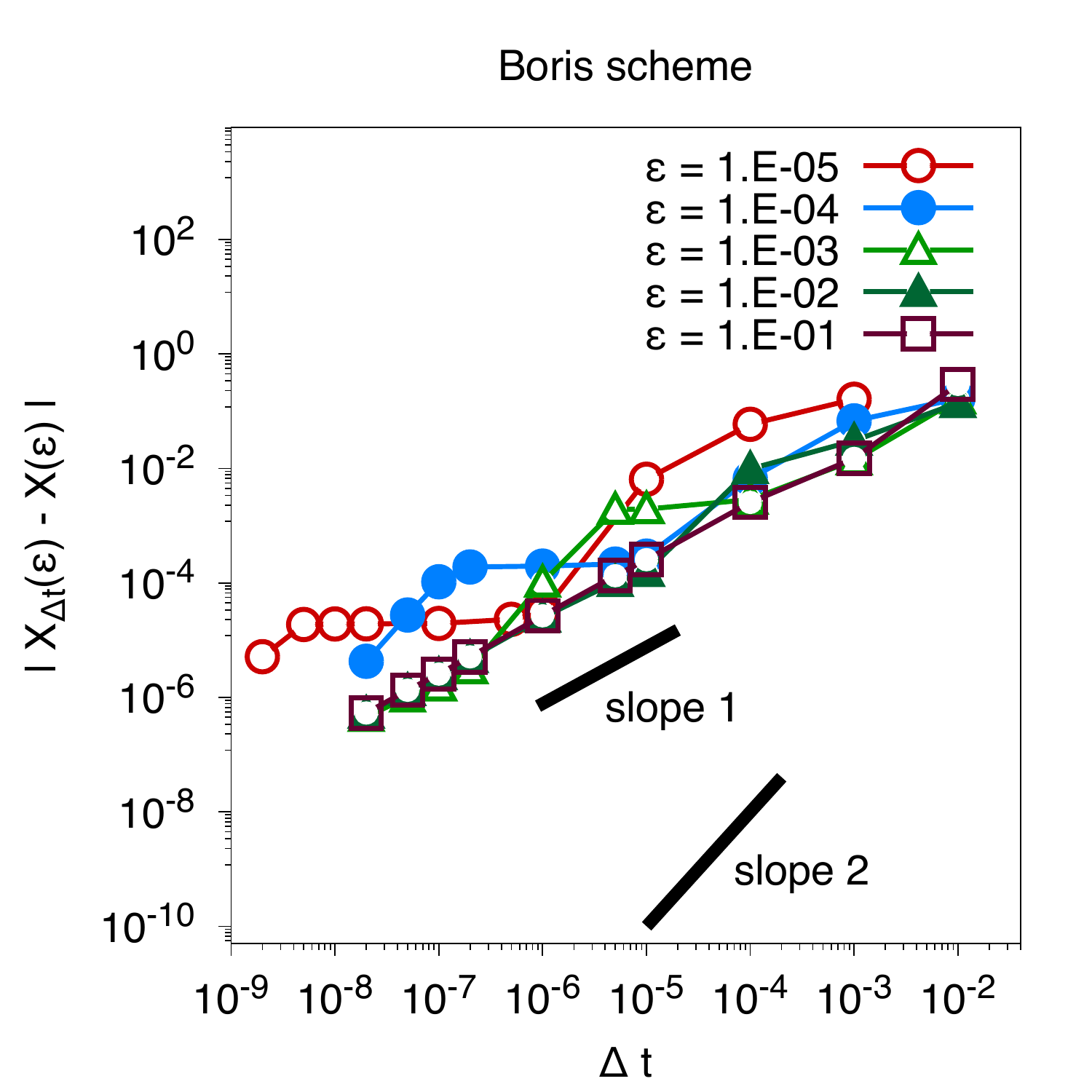} 
\\
(a)  & (b)  
\end{tabular}
\caption{\label{fig:1}
{\bf Particle motion without electric field.} Numerical errors 
$\|\bX^\eps_{\Delta t}-\bX^\eps\|$ obtained for different $\eps$ with (a)  the second order scheme
\eqref{scheme:3-1}-\eqref{scheme:3-2} and (b) the Boris scheme
\cite{boris}, plotted as functions of $\Delta t$.}
 \end{center}
\end{figure}

The latter observation is expected by design and consistent with Figure \ref{fig:2} where for the same set of simulations we plot errors with respect to the asymptotic
system~\eqref{order:2}. This illustrates that, for a fixed $\Delta t$,  the scheme
\eqref{scheme:3-1}--\eqref{scheme:3-2} captures correct first-order dynamics and correct second-order perpendicular drift velocities in the asymptotic $\eps\ll  1$. 

\begin{figure}[H]
\begin{center}
 \begin{tabular}{cc}
\includegraphics[width=7.75cm]{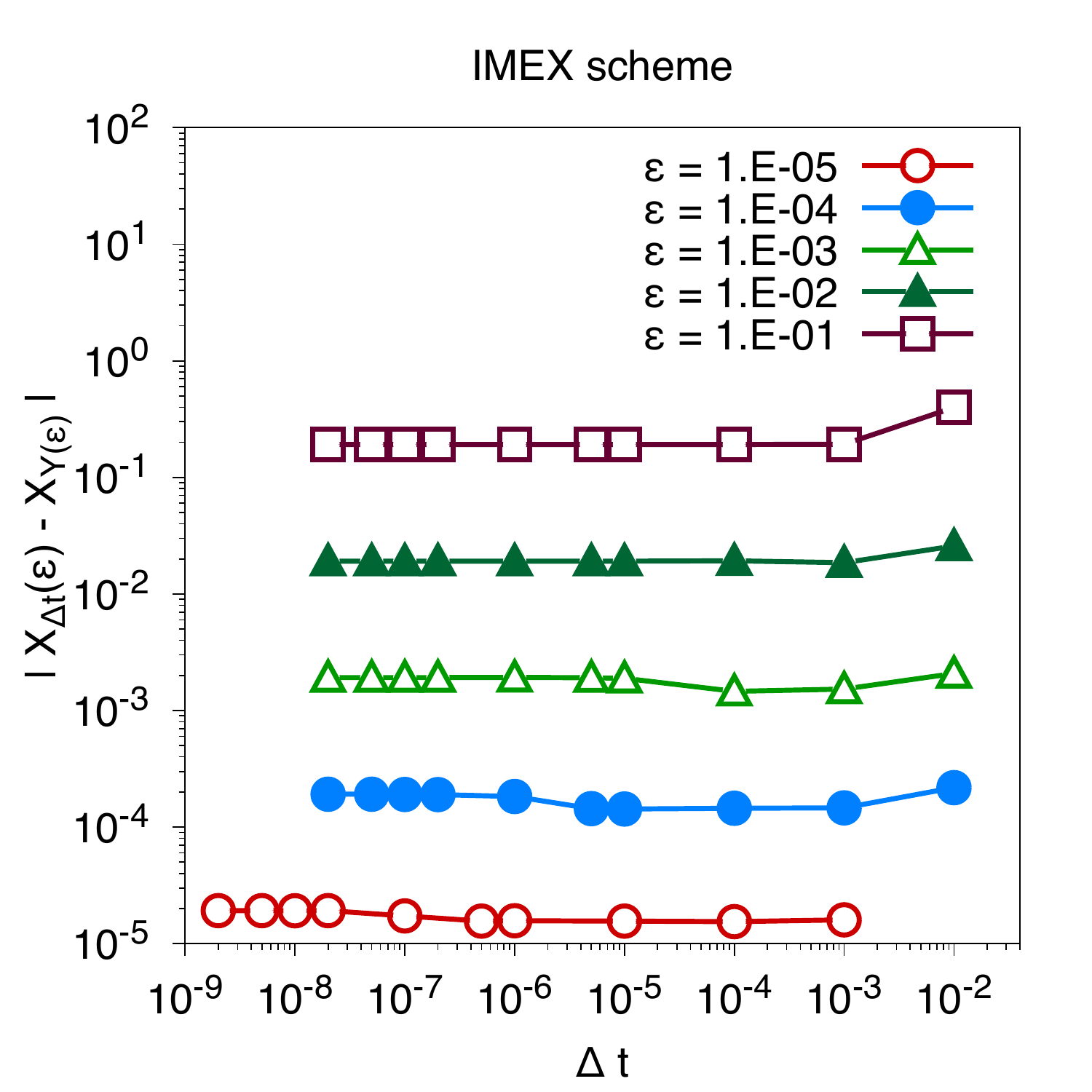} &    
\includegraphics[width=7.75cm]{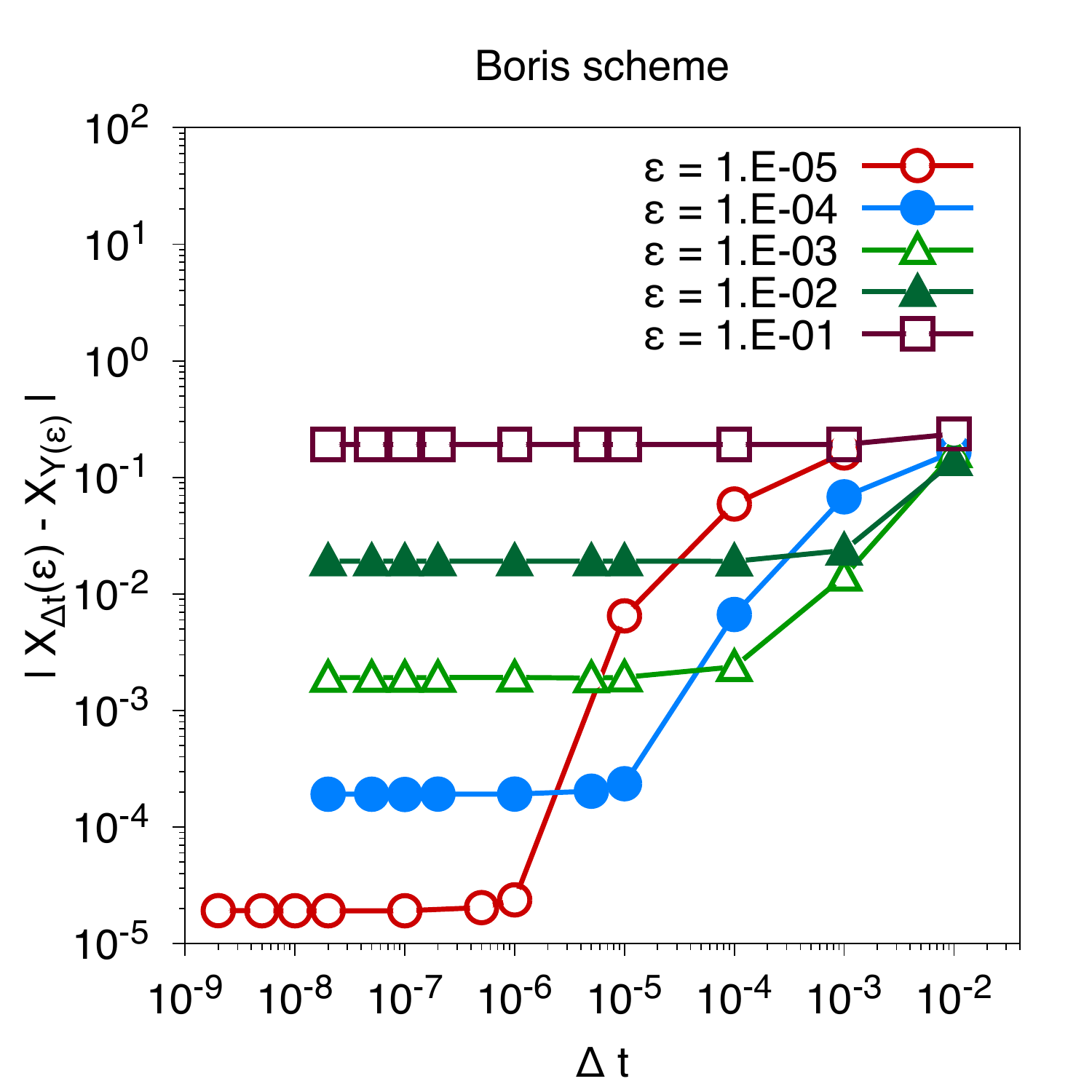} 
\\
(a)  & (b)  
\end{tabular}
\caption{\label{fig:2}
{\bf Particle motion without electric field.} Numerical errors 
$\|\bX_{\Delta t}(\eps)-\bX_{\bY(\eps)}\|$, $\bX_{\bY(\eps)}$ being the spatial position built from $\bY$,  obtained for different $\eps$ with (a) second order scheme
\eqref{scheme:3-1}-\eqref{scheme:3-2} and (b) the Boris scheme
\cite{boris}, plotted as functions of $\Delta t$.}
 \end{center}
\end{figure}

Finally, in Figures~\ref{fig:3} and \ref{fig:4} we show the time evolution of quantities related to slow\footnote{At least at first order.} components $(\mu,\vpar,r,z,\bx)$, 
obtained with the second-order scheme \eqref{scheme:3-1}-\eqref{scheme:3-2} and the Boris scheme, holding fixed both the time step and the stiffness parameter to $\Delta t = 0.02$ and  $\eps=10^{-2}$, which corresponds to an intermediate regime. Both results are compared with the reference solution computed with a small time step. On the one hand, we observe that the variations of the adiabatic invariant $\mu$ (shown in Figure~\ref{fig:3}$(a)$ and \ref{fig:4}$(a)$) are of order $\eps$ as it is expected but the Boris scheme slightly amplifies these variations whereas our IMEX scheme \eqref{scheme:3-1}-\eqref{scheme:3-2} overdamp them. The time evolution of $\vpar$ given by the IMEX scheme coincides with the one given by the reference solution whereas for large time the Boris scheme exhibits a small phase shift. On the other hand, we also present the projection in the $r$-$z$ plane and $3D$ trajectory in Figures~\ref{fig:3} and \ref{fig:4}, where $\bx=(x,y,z)$ denote there Cartesian coordinates. Both results are in good agreement with the ones corresponding to the reference solution. However, again the Boris scheme has some spuriously large oscillations whereas the trajectory obtained with the IMEX scheme is smooth and shows better agreement with spatial positions. 

Though we do not show numerical plots, it is worth mentioning that for
this test without electric field the kinetic energy is theoretically
preserved on the continuous system \eqref{eq:1} over time. However,
the numerical approximation provided by the IMEX scheme \eqref{scheme:3-1}-\eqref{scheme:3-2v} does not preserve
exactly the discrete kinetic energy  and its the fluctuations are of
order $10^{-6}$ (and do not increase over time)  whereas the  fluctuations to the Boris scheme are of order of the round-off error. 

\begin{figure}[H]
\begin{center}
 \begin{tabular}{cc}
\includegraphics[width=7.75cm]{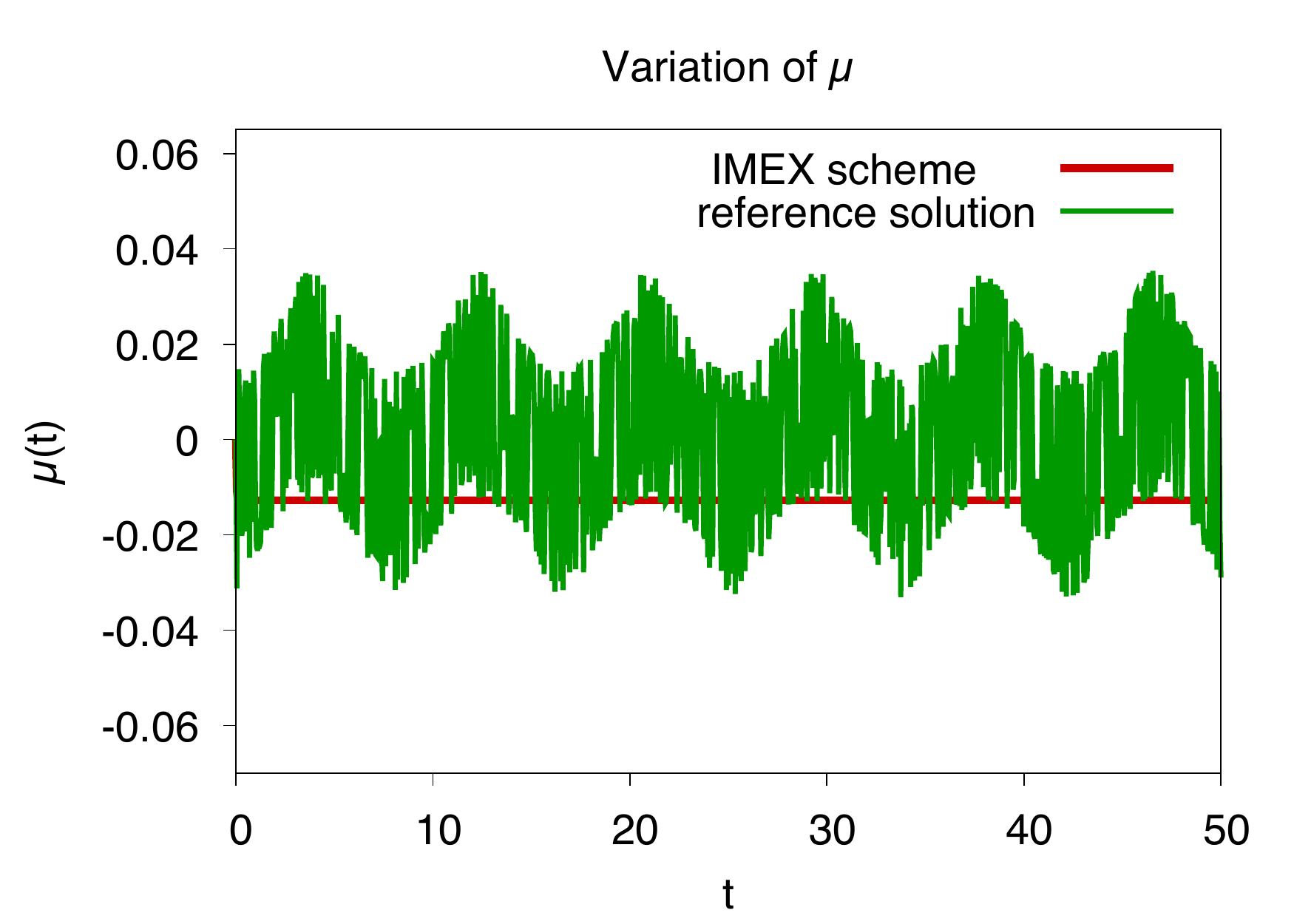} &    
\includegraphics[width=7.75cm]{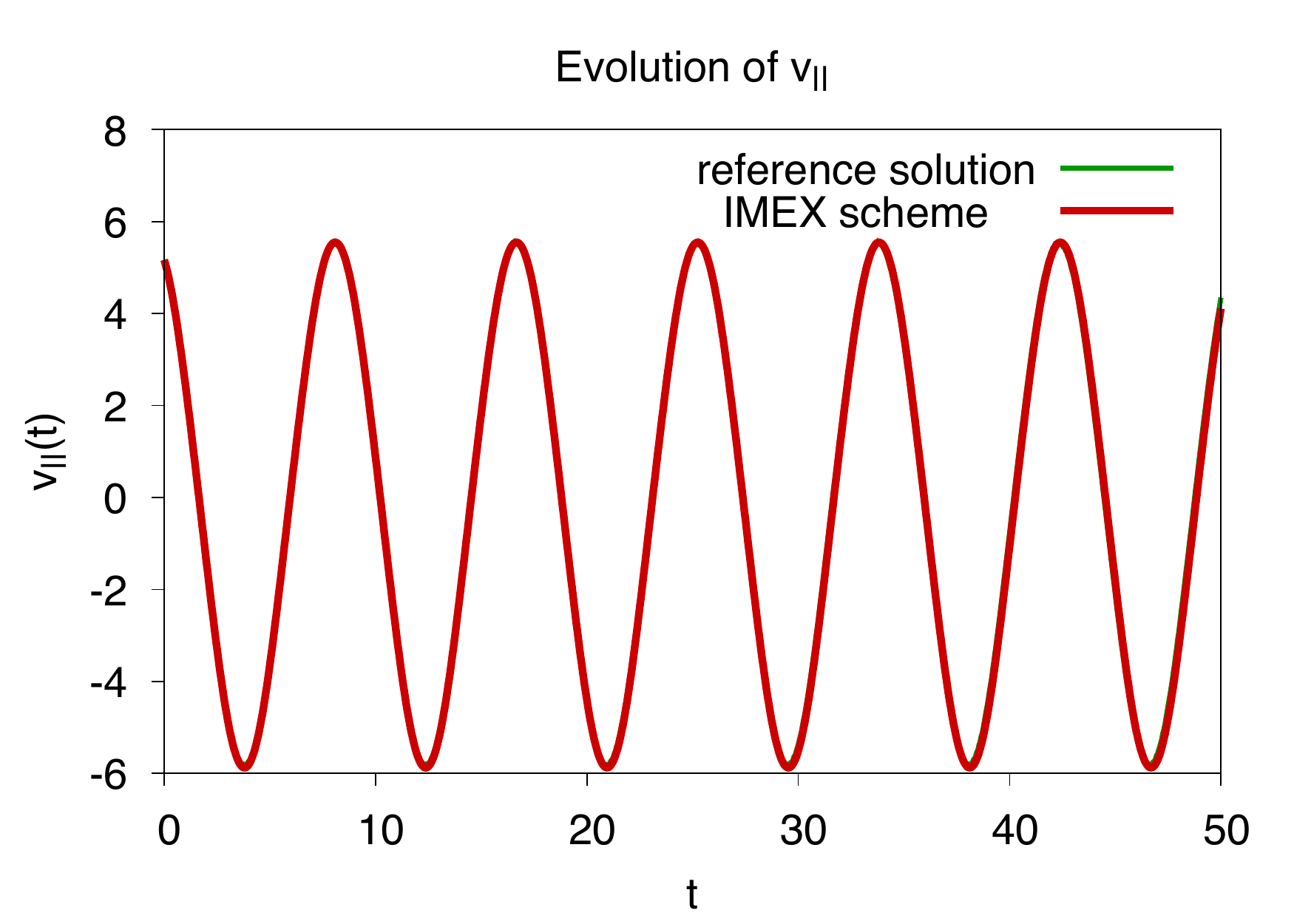}
  \\
   (a) & (b)
   \\
 \includegraphics[width=7.75cm]{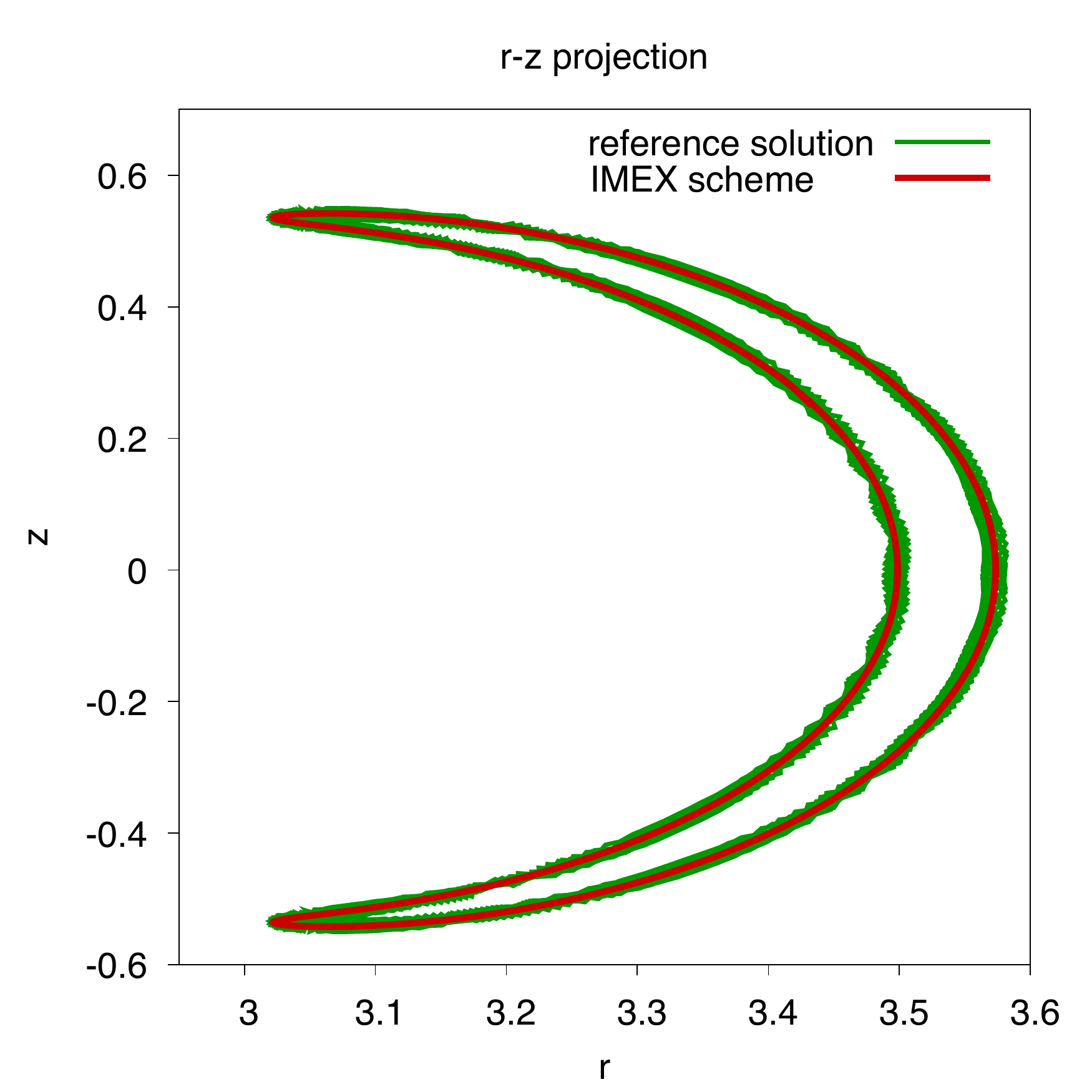}&
 \includegraphics[width=7.75cm]{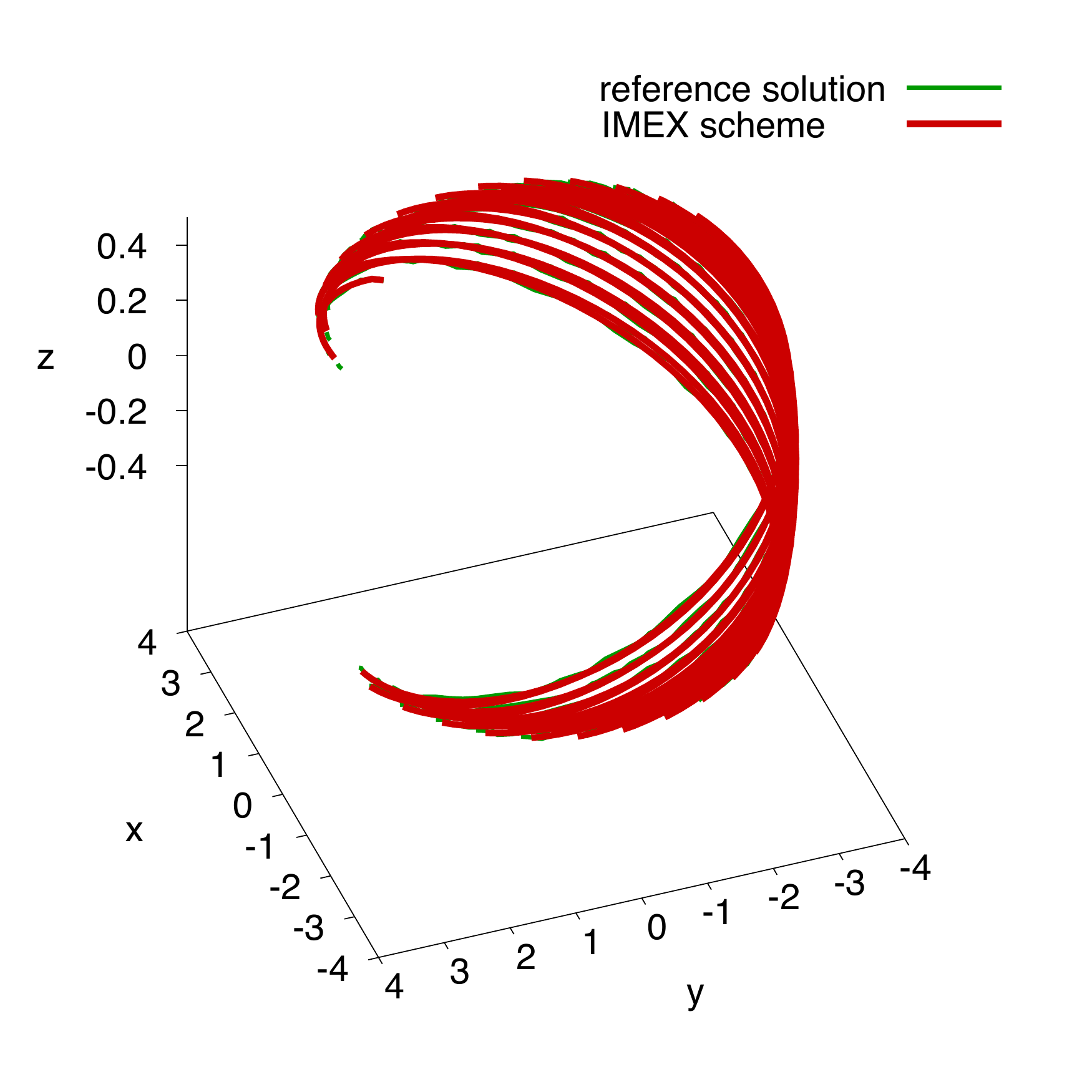}  \\
(c)  & (d)  
\end{tabular}
\caption{\label{fig:3} {\bf Particle motion without electric field.}
  Time evolution of  (a) $\mu$,  (b) $v_\parallel$,  (c) $r$-$z$ projection and (d) 3D trajectory obtained with the  second order scheme \eqref{scheme:3-1}-\eqref{scheme:3-2} with $\Delta t=2 \,10^{-3}$ and $\eps=10^{-2}$.}
 \end{center}
\end{figure}

\begin{figure}[H]
\begin{center}
 \begin{tabular}{cc}
\includegraphics[width=7.75cm]{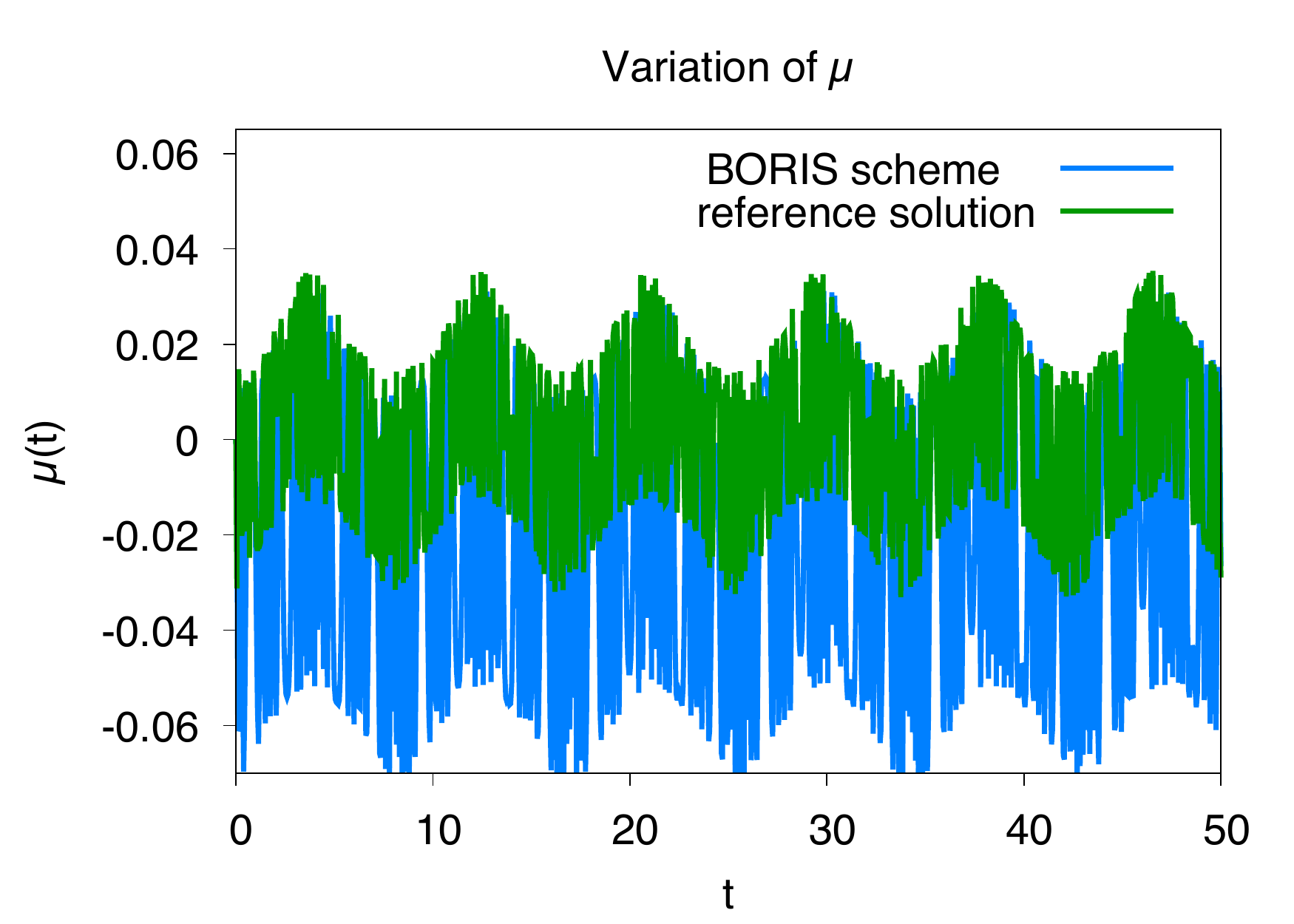} &    
\includegraphics[width=7.75cm]{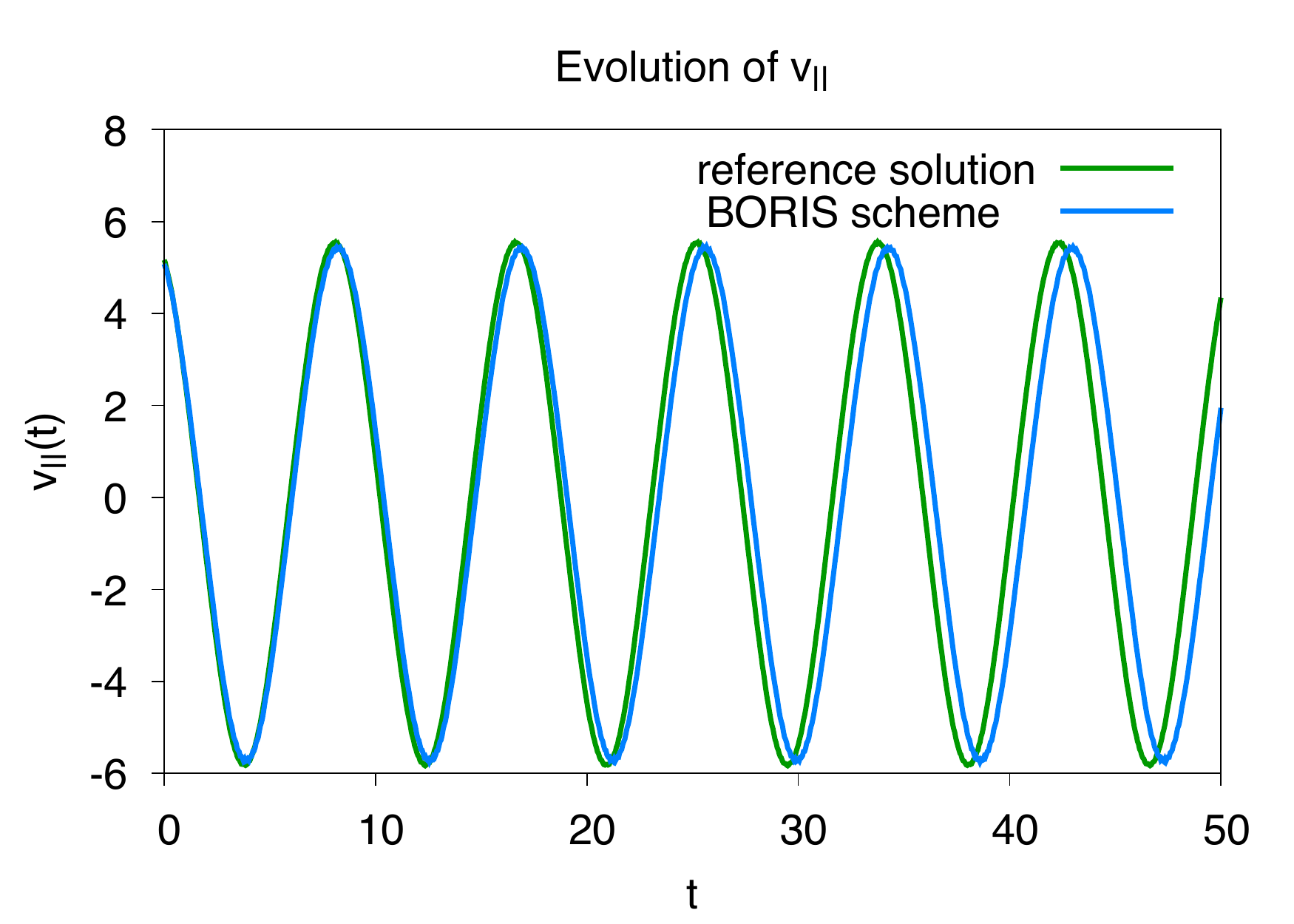} \\
  (a) & (b) \\
 \includegraphics[width=7.75cm]{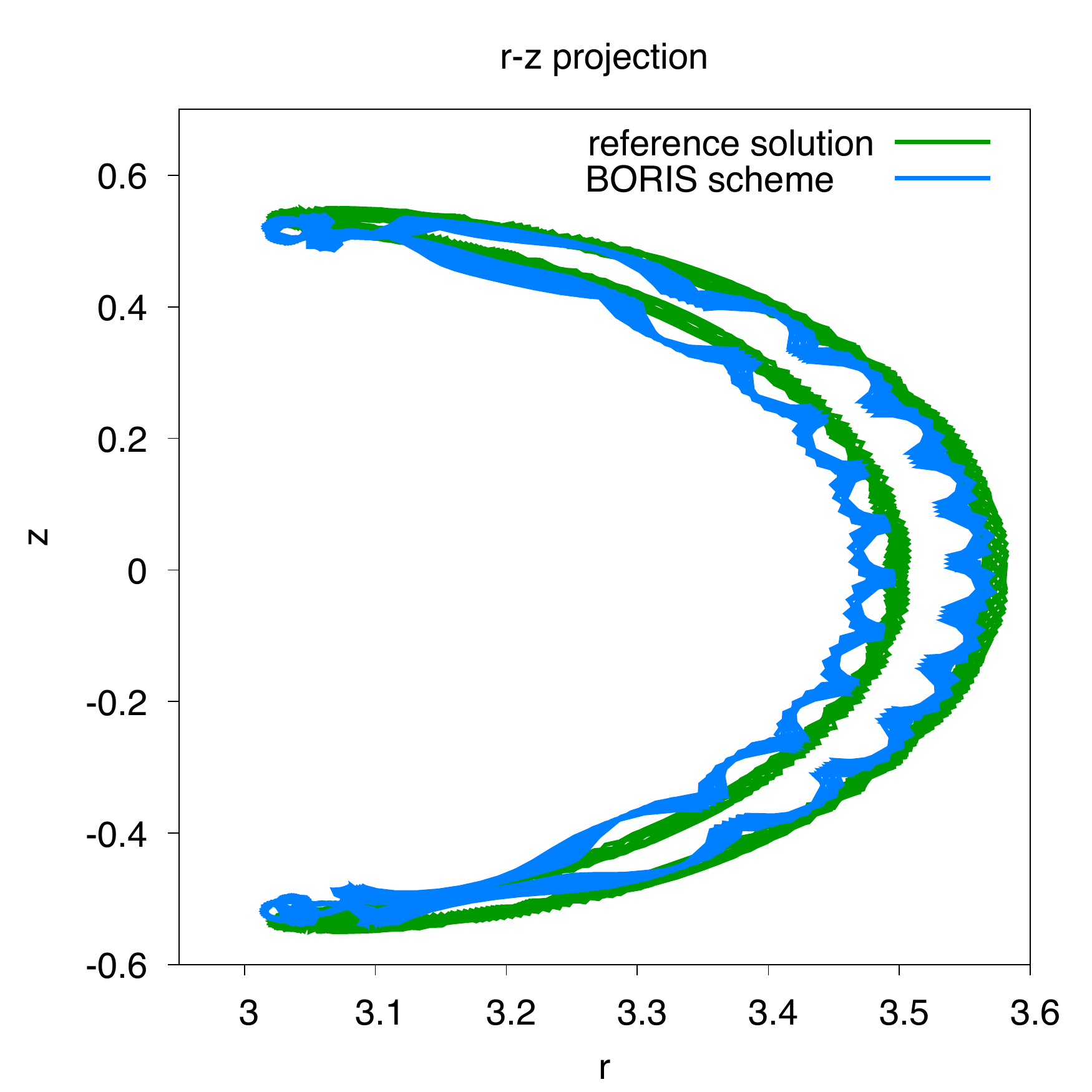}&
 \includegraphics[width=7.75cm]{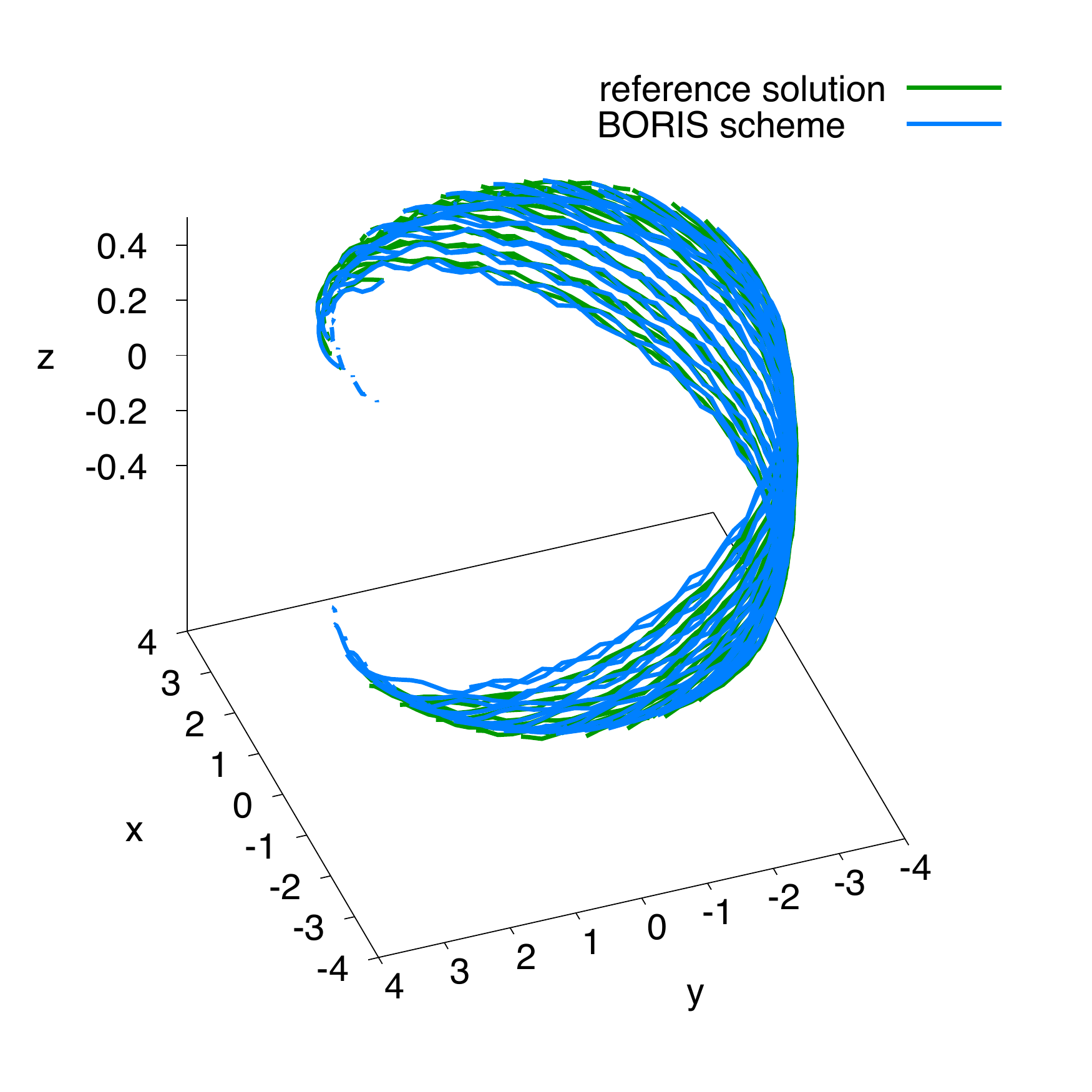}  \\
 (c)  & (d)  
\end{tabular}
\caption{\label{fig:4} {\bf Particle motion without electric field.}
  Time evolution of  (a) $\mu$, (b) $v_\parallel$,  (c) $r$-$z$ projection and (d) 3D trajectory obtained with the  second order Boris scheme \cite{boris} with $\Delta t=2 \,10^{-3}$ and $\eps=10^{-2}$.}
\end{center}
\end{figure}

\subsection{Tokamak-like Equilibrium}
Now we consider an equilibrium magnetic field in a tokamak-like
geometry \cite{cerfon2010}. Explicitly, a Solov'ev equilibrium solution of the Grad- Shafranov equation for the flux function $\psi$ can be written as
\[
\psi(r)  = 5\,\left( \frac{r^2}{2} \,-\, \frac{r^3}{3}\right)\,,
\]
and, correspondingly, we set
\[
\bB:=B_0\,\nabla\varphi+\nabla\psi\wedge\nabla\varphi
\]
with $B_0=50$. Therefore $\bB=B_\theta\,\eDt+B_\varphi\,\eDp$ with
\begin{align*}
B_\theta&:=\frac{5\, r\,(1-r)}{R} \,=\,\frac{5\, r\,(1-r)}{R_0 + r\cos(\theta)}\,,&
B_\varphi&:=\frac{B_0}{R} \,=\,\frac{B_0}{R_0 + r\cos(\theta)}\,.
\end{align*}
Additionally, we introduce a compatible electrostatic potential. 
Our choice is motivated by the fact that, in steady form, the ideal MHD Ohm's law $\bE\,+\,\bv \,\wedge \,\bB  \,=\, 0$ requires that $\bB\cdot \bE = 0$. We set $\bE=-\nabla\phi$, with electrostatic potential $\phi = -2\,\psi$.

We perform several numerical simulations for various parameters $\eps$
and time steps $\Delta t$. Again, as in Figures~\ref{fig:1} and~\ref{fig:2}, numerical errors show that a better accuracy of our approach based on the combination of augmented formulation \eqref{eq:1} with IMEX scheme \eqref{scheme:3-1}-\eqref{scheme:3-2}, allowing in particular the use of a larger time step $\Delta t$ than the ones compatible with the Boris scheme.  Since the results are globally the same as the ones shown
in the previous section, we do not report them here.

However we complete time evolutions for variables as in focus on the plots of the time evolution of physical quantities as in Figures~\ref{fig:3} and~\ref{fig:4}, with those of 
the potential and kinetic energy. Explicitly, in Figures~\ref{fig:5-1},  \ref{fig:6-1}  and~\ref{fig:7-1}, we show plots for energies and $\mu$, whereas in \ref{fig:5-2},  \ref{fig:6-2}  and \ref{fig:7-2} we show time evolutions of $\vpar$ and the $r$-$z$ projection of the spatial trajectory. In these experiments we fix $\Delta t=10^{-2}$ whereas the inverse of the amplitude of the magnetic field $\eps$ varies through $\eps=5\,10^{-2}$, $10^{-2}$, $10^{-3}$. 

First for $\eps=5\times 10^{-2}$ (and $\Delta t=10^{-2}$), we observe that both numerical outcome (obtained from the IMEX \eqref{scheme:3-1}-\eqref{scheme:3-2} and Boris schemes) are in good agreement with the reference solution, even if  the IMEX scheme \eqref{scheme:3-1}-\eqref{scheme:3-2} has a tendency to overdamp oscillations whereas the Boris scheme seems to slightly overamplify them. This point may be observed for instance on the time evolution of $\mu$ in Figure \ref{fig:5-1} or in the $r$-$z$ projection of the trajectory in Figure \ref{fig:5-2}, where particles oscillate following a banana trajectory.

\begin{figure}[H]
\begin{center}
 \begin{tabular}{cc}
\includegraphics[width=7.75cm]{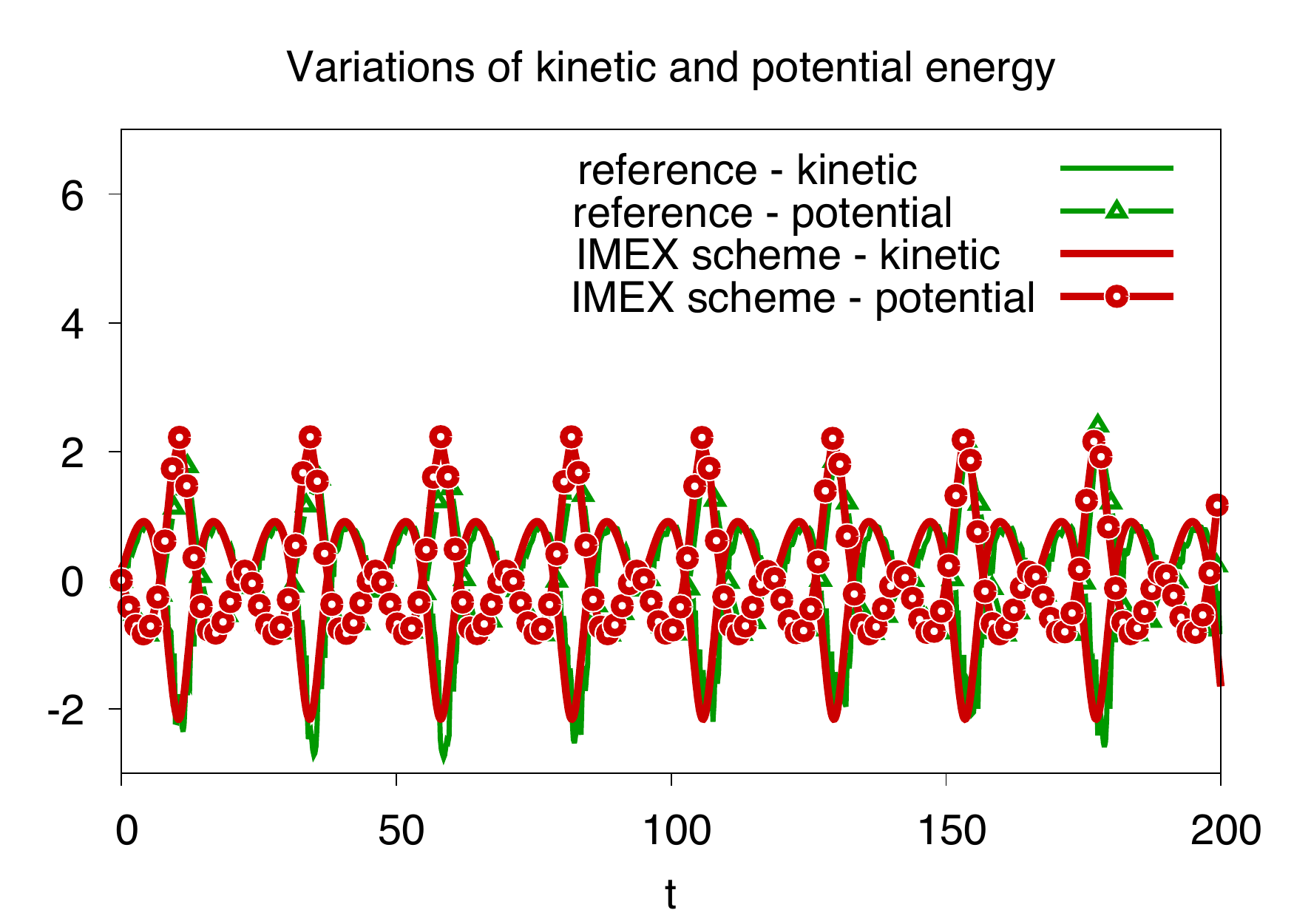} &    
\includegraphics[width=7.75cm]{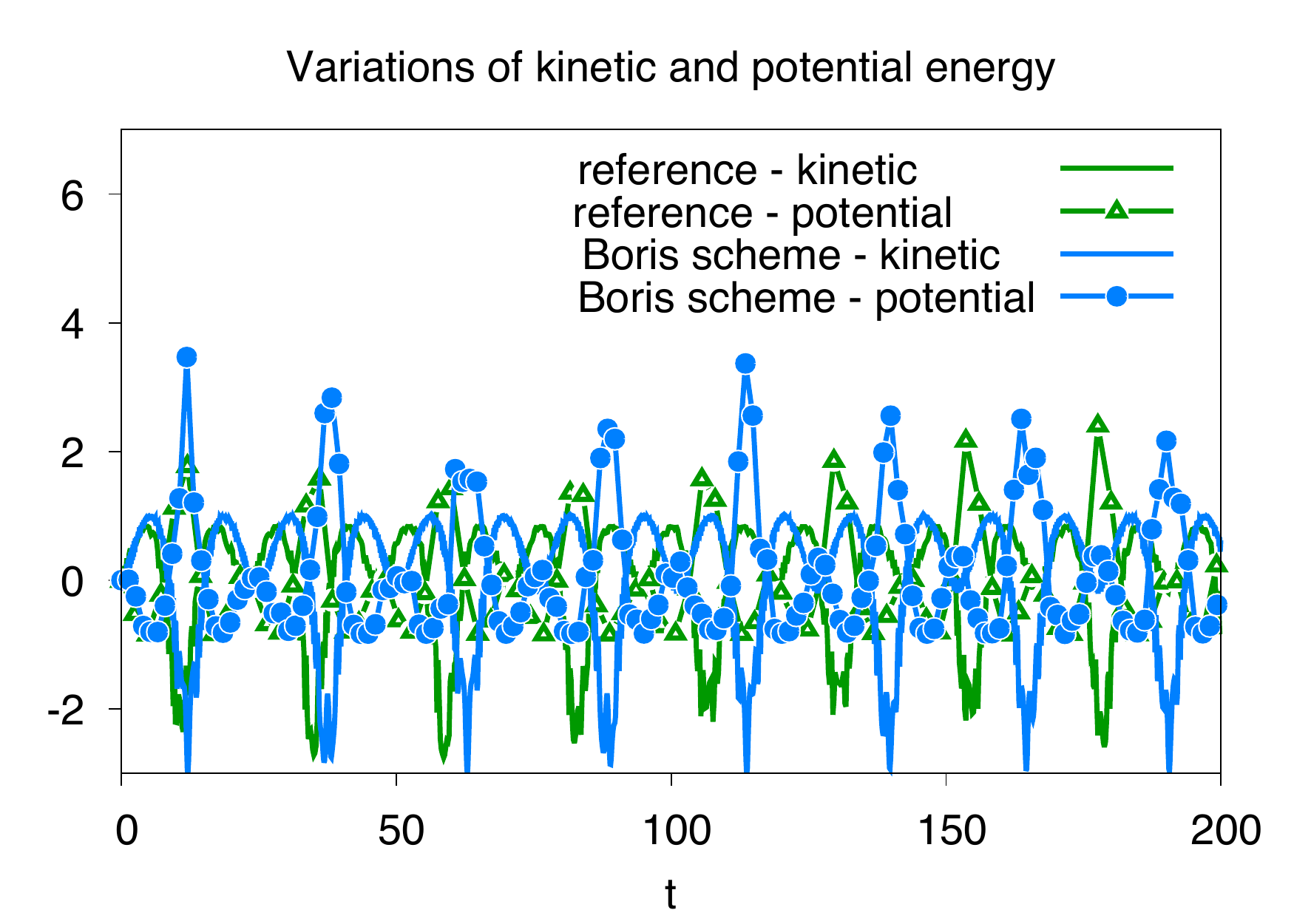}
   \\
\includegraphics[width=7.75cm]{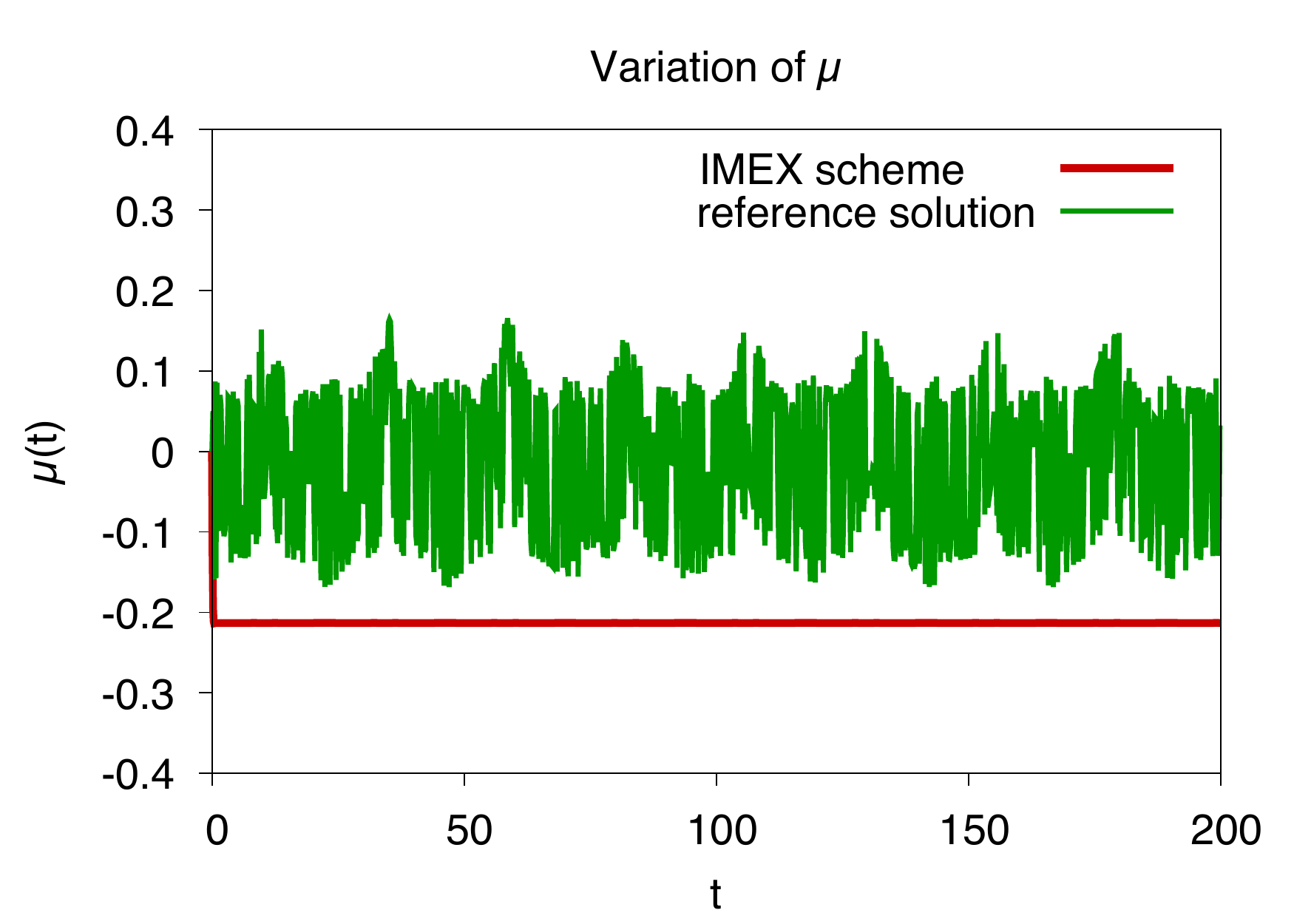}&
\includegraphics[width=7.75cm]{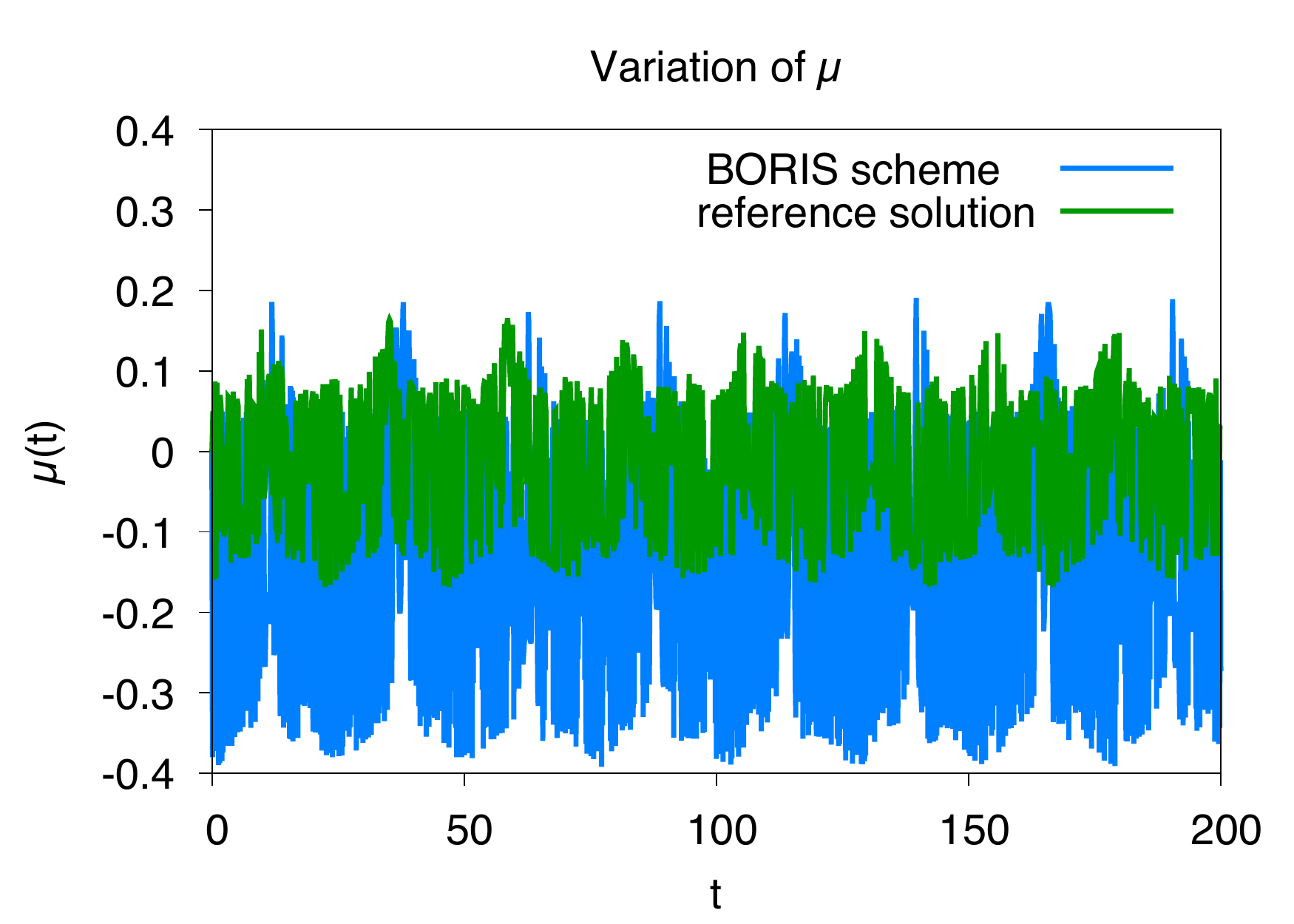}
 \\
(a)  & (b)  
\end{tabular}
\caption{\label{fig:5-1} {\bf Tokamak Equilibrium  $\eps=5\,\times\,10^{-2}$.}
  Time variation of kinetic \& potential energy and adiabatic invariant  $\mu$ obtained with (a) the second order scheme \eqref{scheme:3-1}-\eqref{scheme:3-2} and (b) the second order Boris scheme \cite{boris}, with $\Delta t=10^{-2}$.}
 \end{center}
\end{figure}

\begin{figure}[H]
\begin{center}
 \begin{tabular}{cc}
\includegraphics[width=7.75cm]{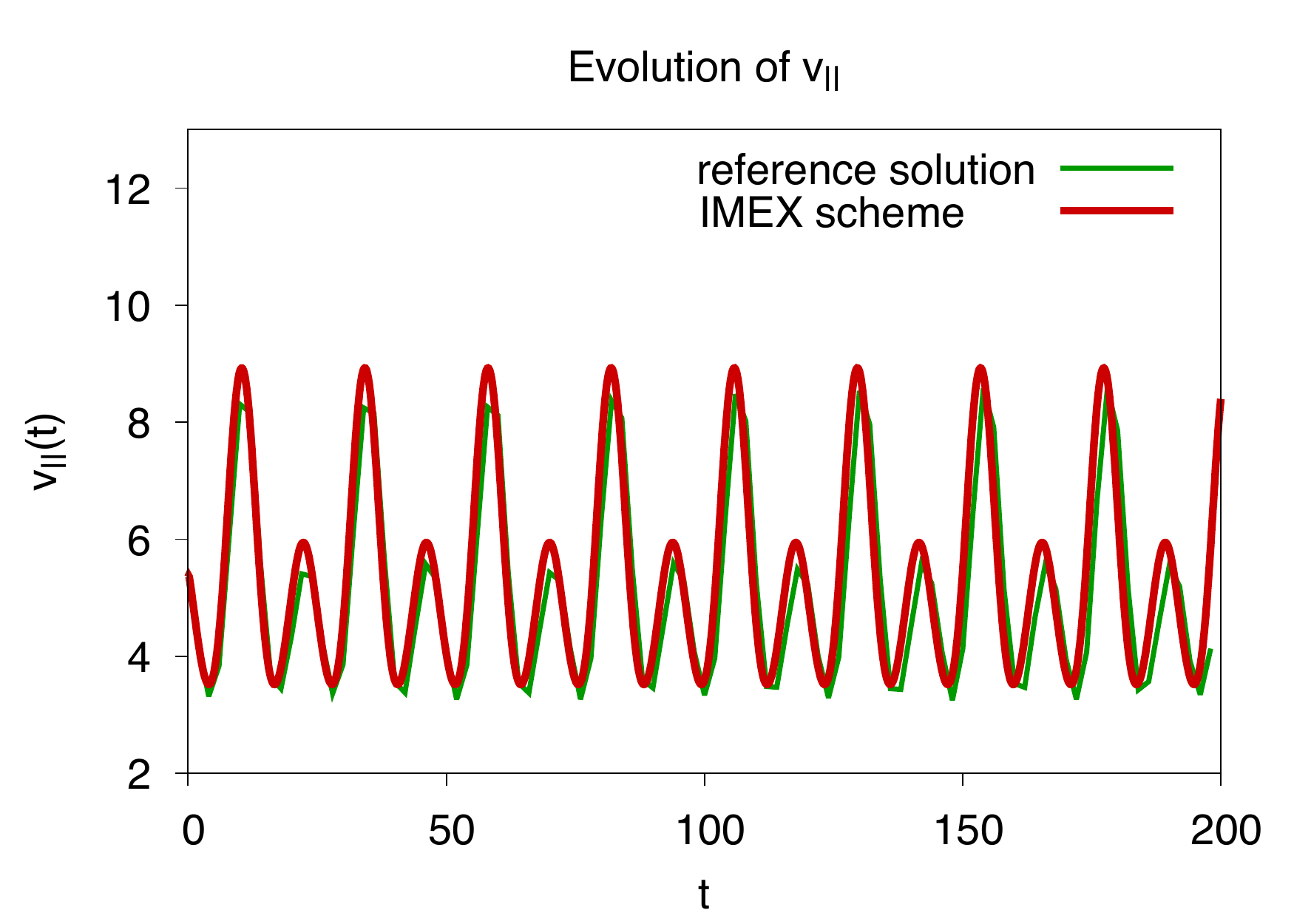} &    
\includegraphics[width=7.75cm]{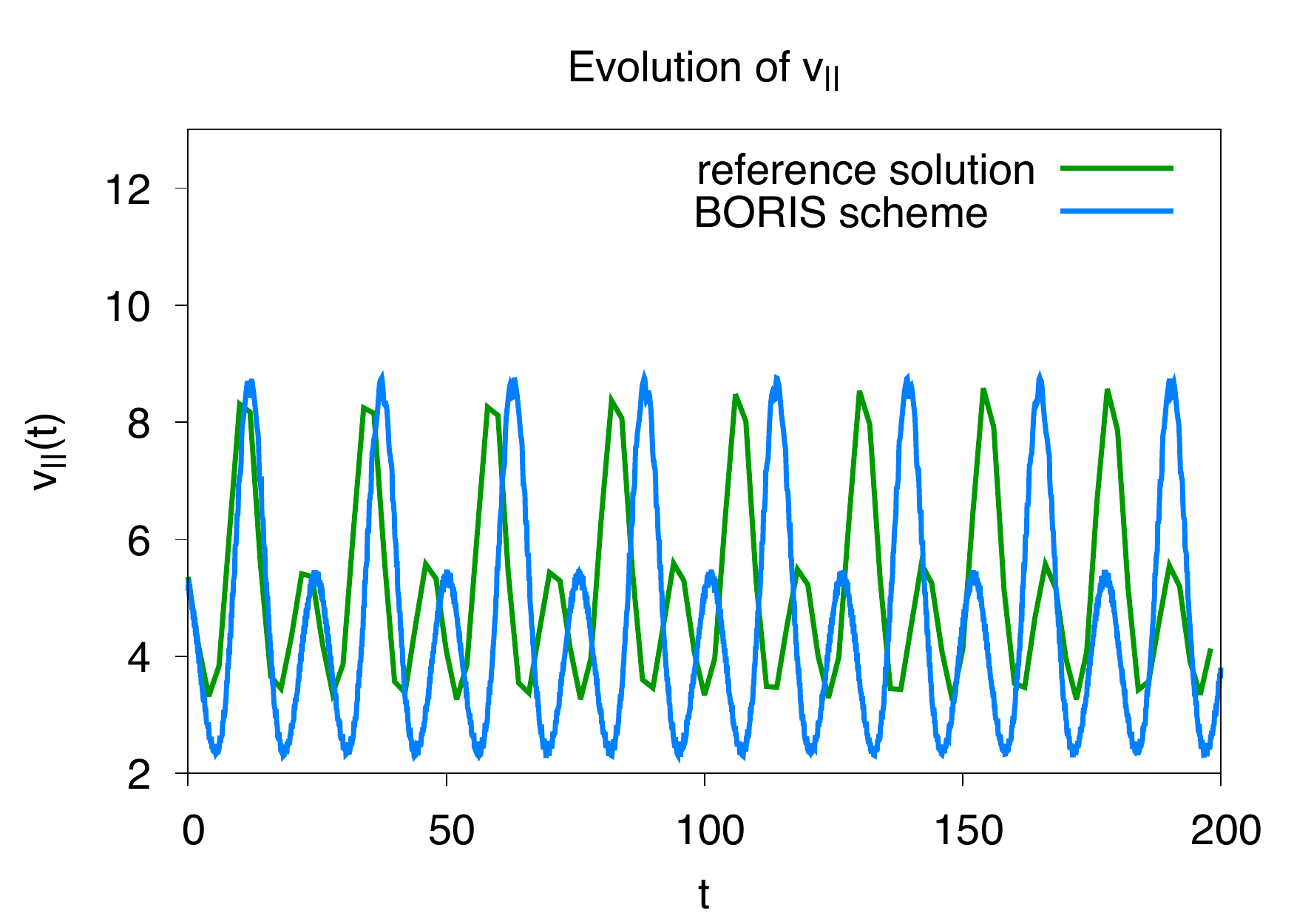}
   \\
\includegraphics[width=7.75cm]{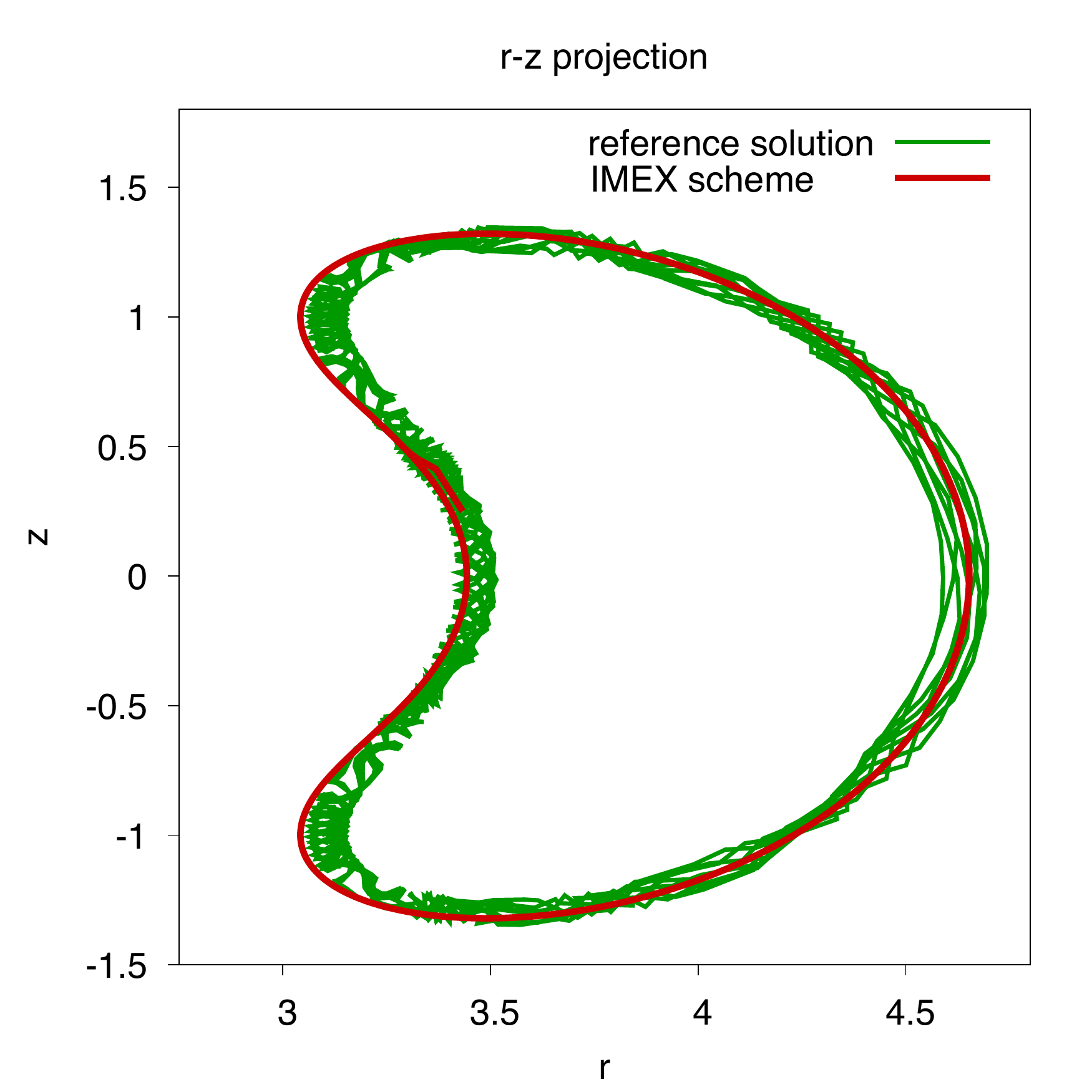}&
\includegraphics[width=7.75cm]{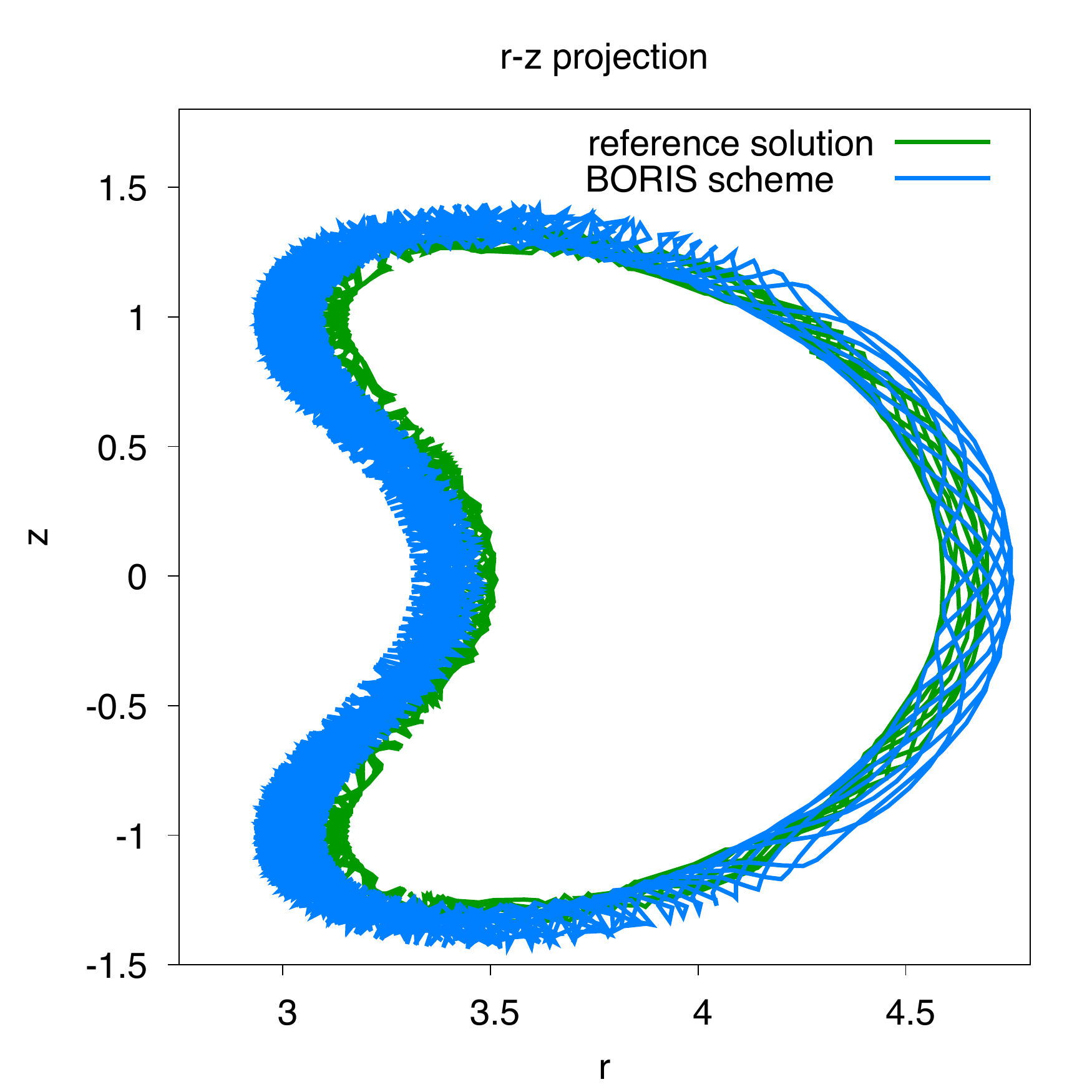}
   \\
(a)  & (b)  
\end{tabular}
\caption{\label{fig:5-2} {\bf Tokamak Equilibrium $\eps=5\,\times\,10^{-2}$.}
  Time evolution of
  $v_\parallel$ and $r$-$z$ projection obtained with (a) the second order scheme \eqref{scheme:3-1}-\eqref{scheme:3-2} and (b) the second order Boris scheme \cite{boris},   with $\Delta t=10^{-3}$.}
 \end{center}
\end{figure}

For a smaller $\eps= 10^{-2}$ (and the same time step $\Delta t=10^{-2}$), the
trajectory obtained by applying Boris scheme again oscillates with a spuriously
larger amplitude. Moreover,  for large time ($t\geq 100$) there is a
phase shift in time on the evolution of the potential energy and kinetic
energy. Let us stress that these spurious oscillations and phase shifts do decrease when taking smaller time steps.  However, with the same time step, the IMEX scheme \eqref{scheme:3-1}-\eqref{scheme:3-2} is much more stable and gives an accurate approximation of the trajectory even for large times. 

Incidentally we point out that on the latter one observes the junction of two bananas in spatial trajectories.

\begin{figure}[H]
\begin{center}
 \begin{tabular}{cc}
\includegraphics[width=7.75cm]{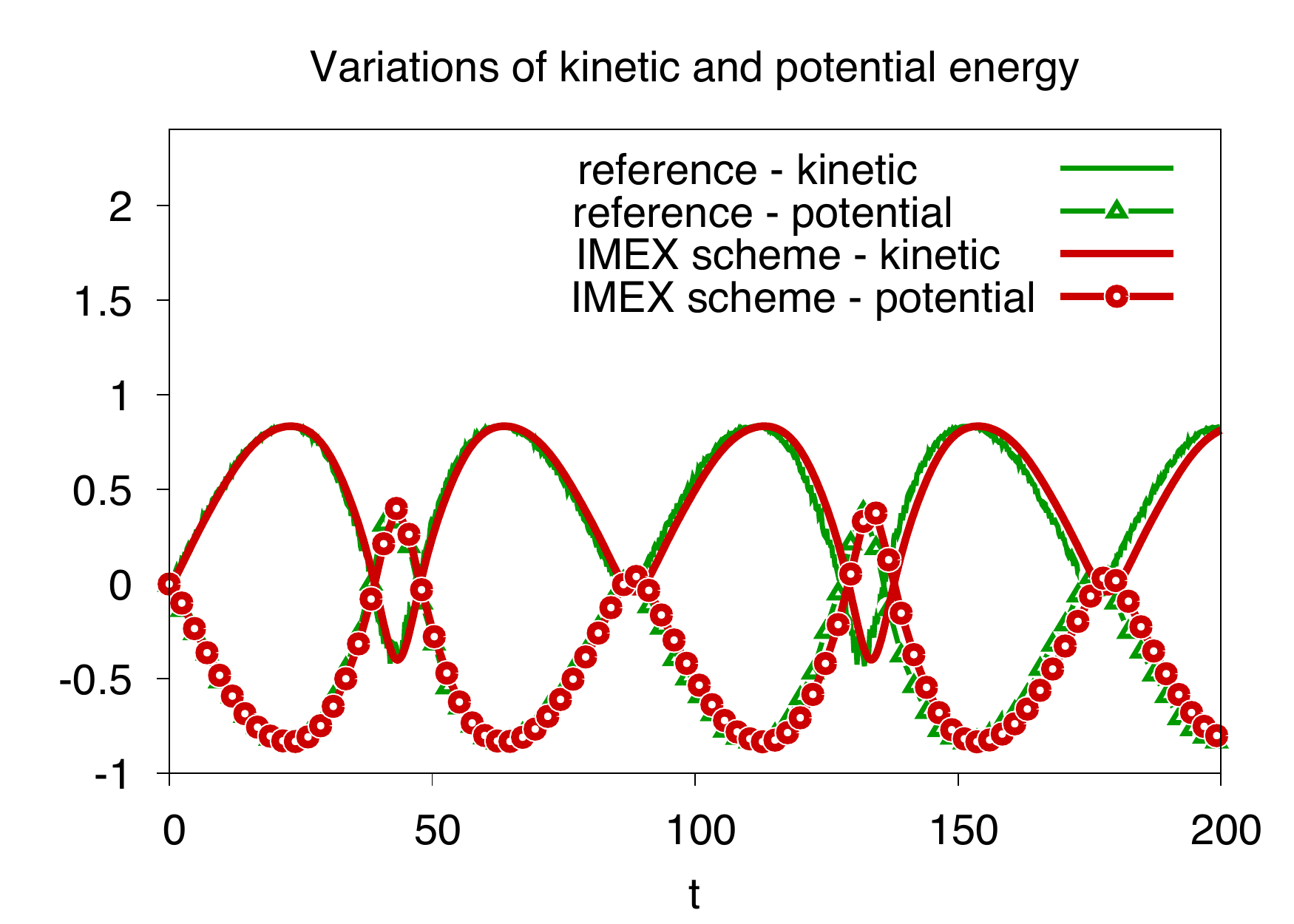} &    
\includegraphics[width=7.75cm]{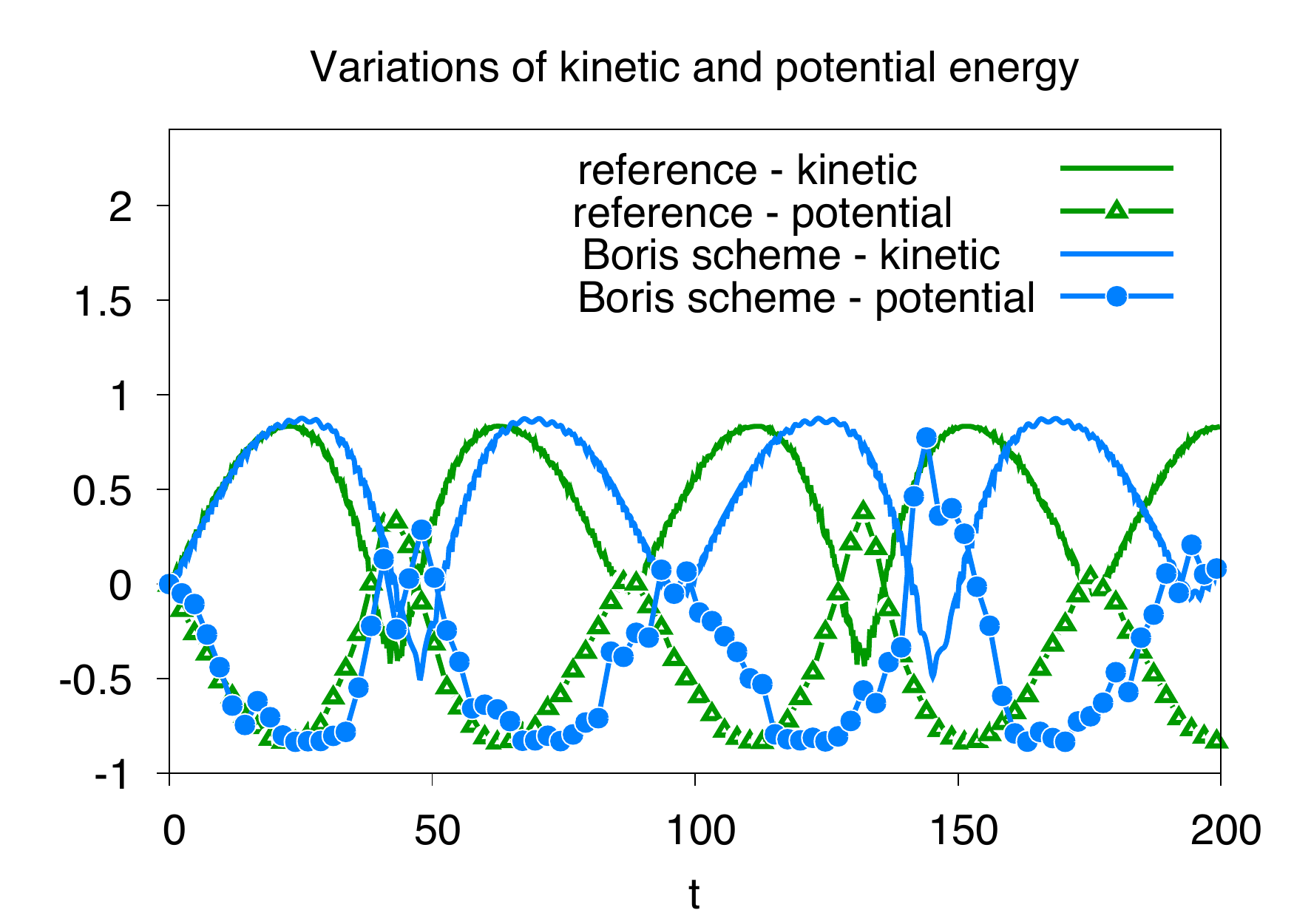}
\\
\includegraphics[width=7.75cm]{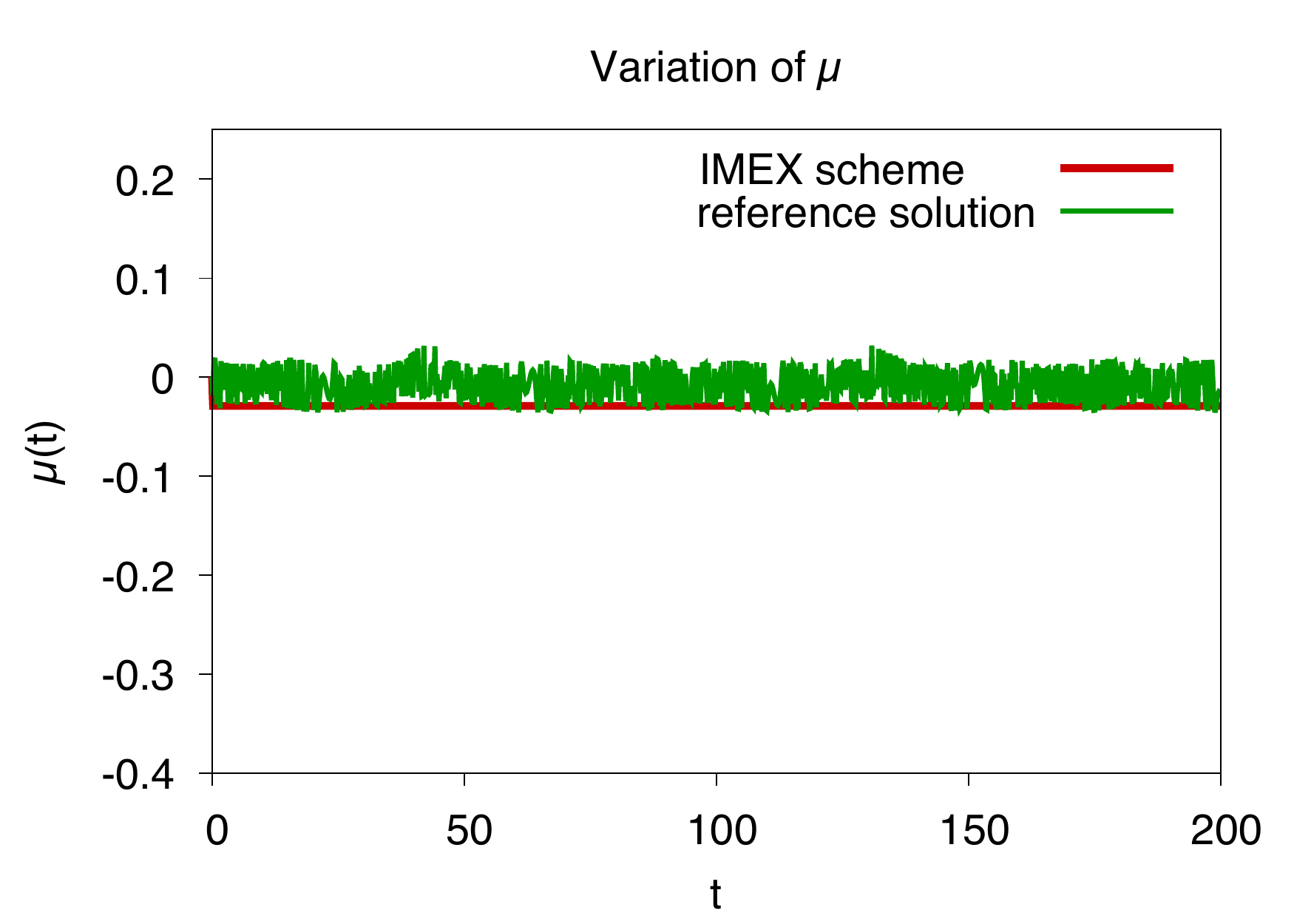}&
\includegraphics[width=7.75cm]{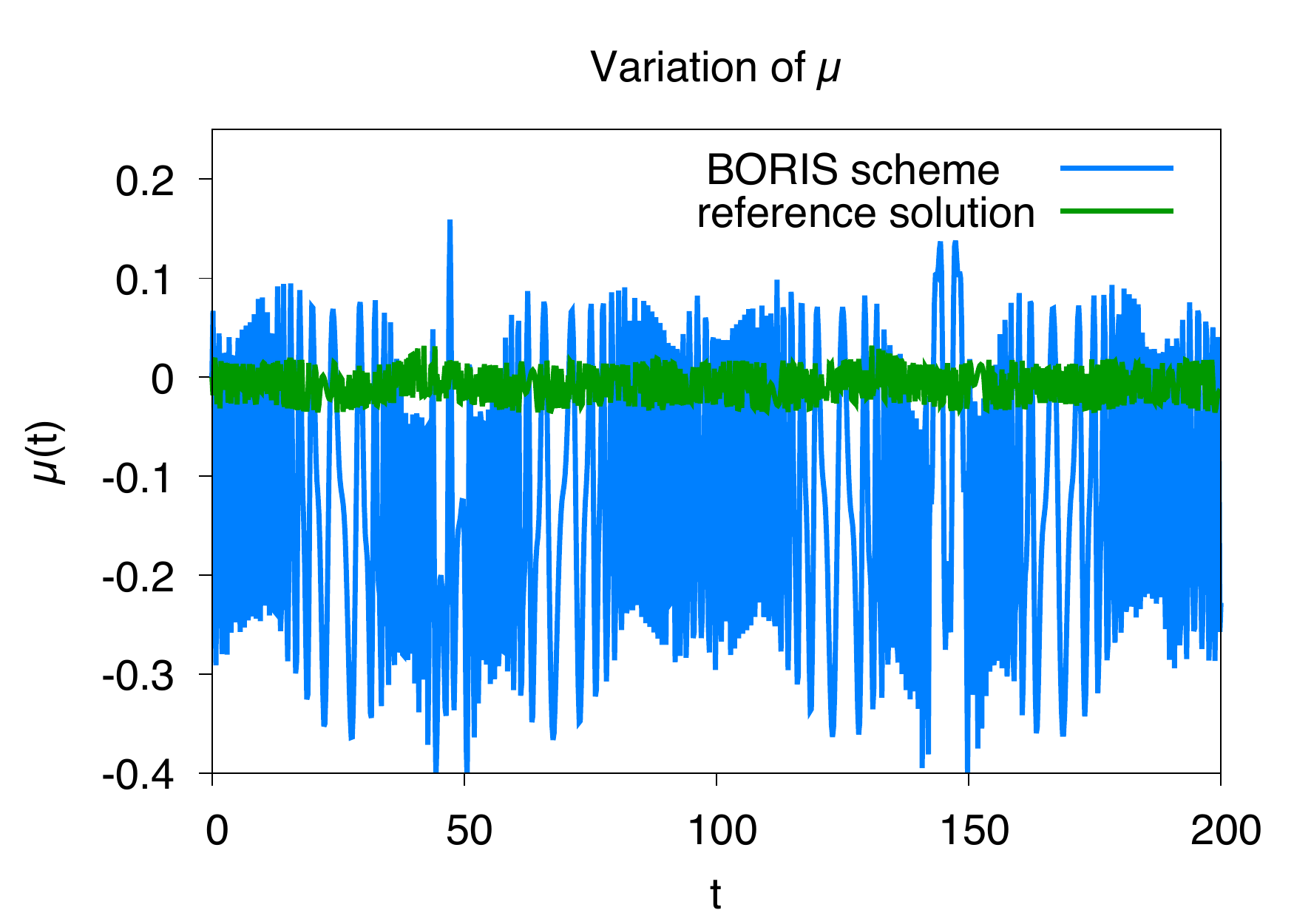}
\\
(a)  & (b)  
 \end{tabular}
 \caption{\label{fig:6-1} {\bf Tokamak Equilibrium $\eps=10^{-2}$.}
  Time variation of kinetic \& potential energy and adiabatic invariant  $\mu$ obtained with (a) the second order scheme \eqref{scheme:3-1}-\eqref{scheme:3-2} and (b) the second order Boris scheme \cite{boris}, with $\Delta t=10^{-2}$.}
 \end{center}
\end{figure}

\begin{figure}[H] 
\begin{center}
  \begin{tabular}{cc}
 \includegraphics[width=7.75cm]{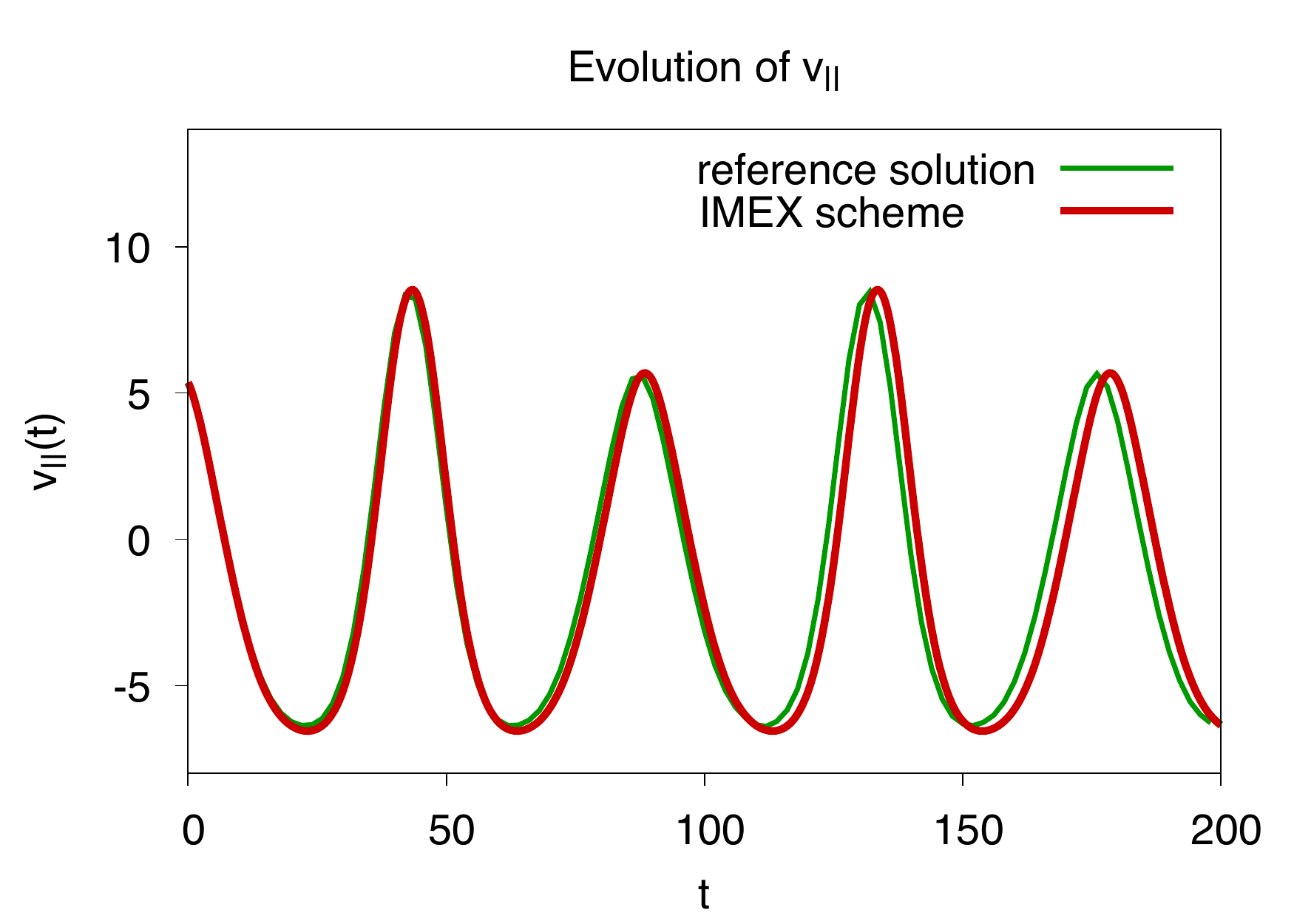} &    
\includegraphics[width=7.75cm]{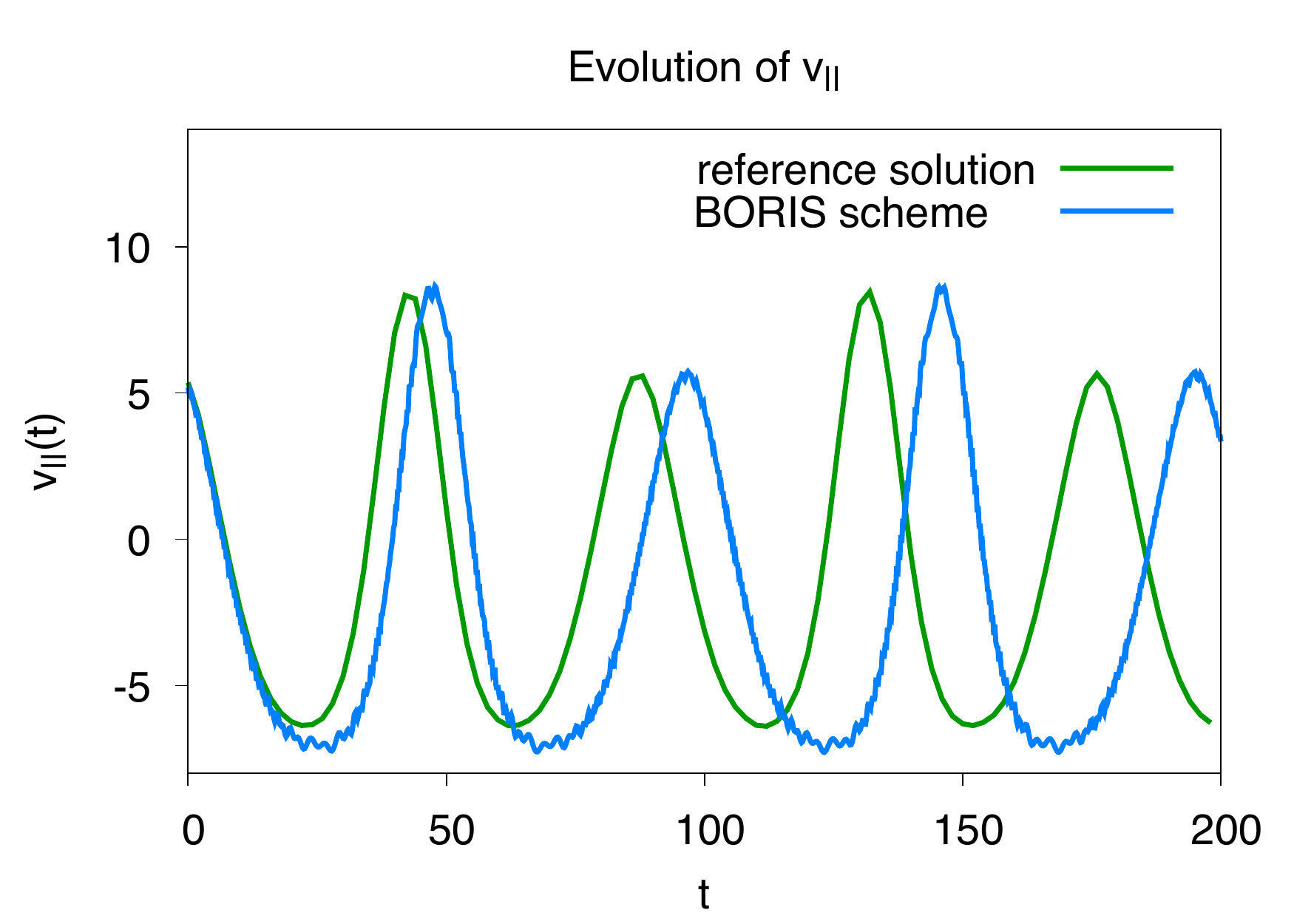}
 \\
\includegraphics[width=7.75cm]{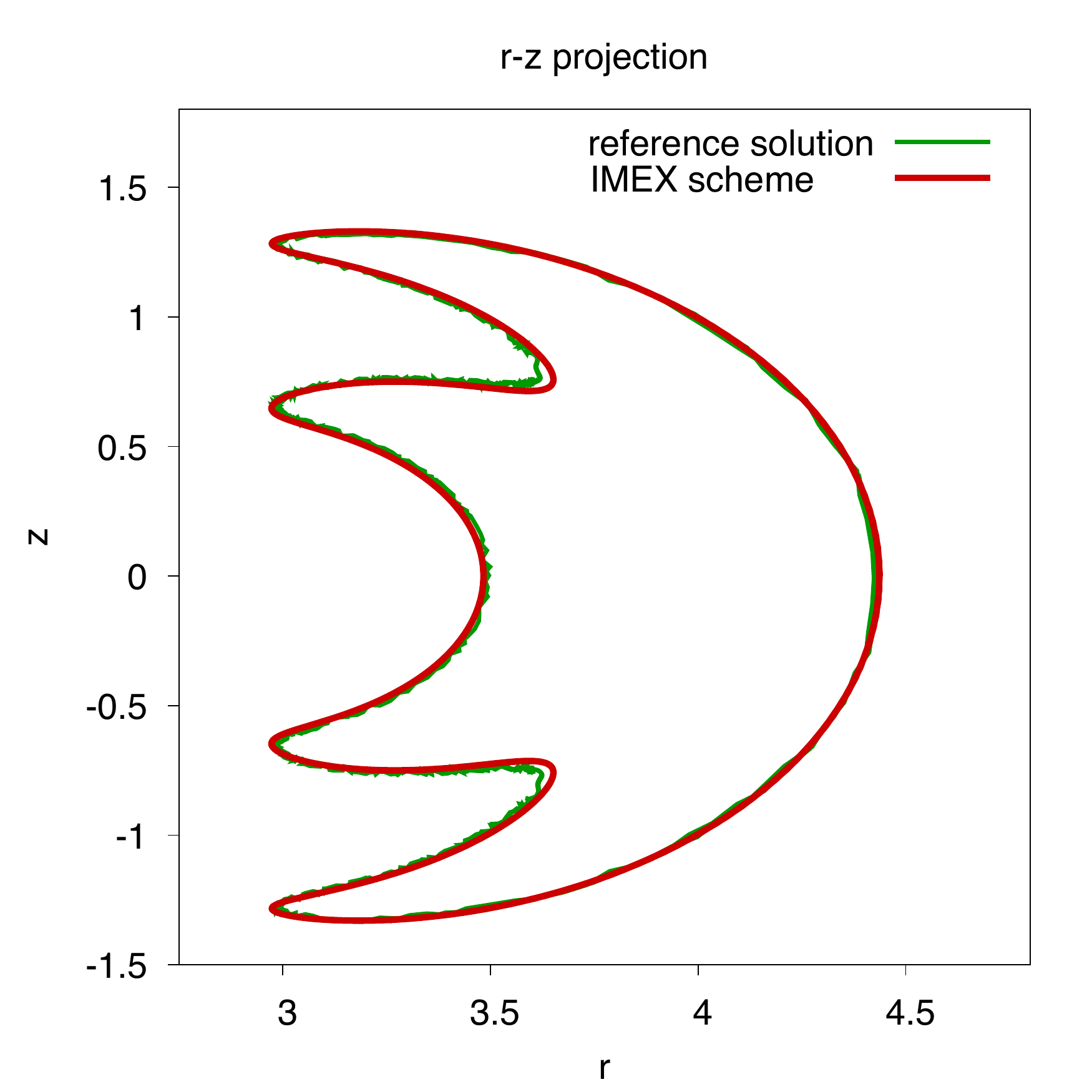}&
\includegraphics[width=7.75cm]{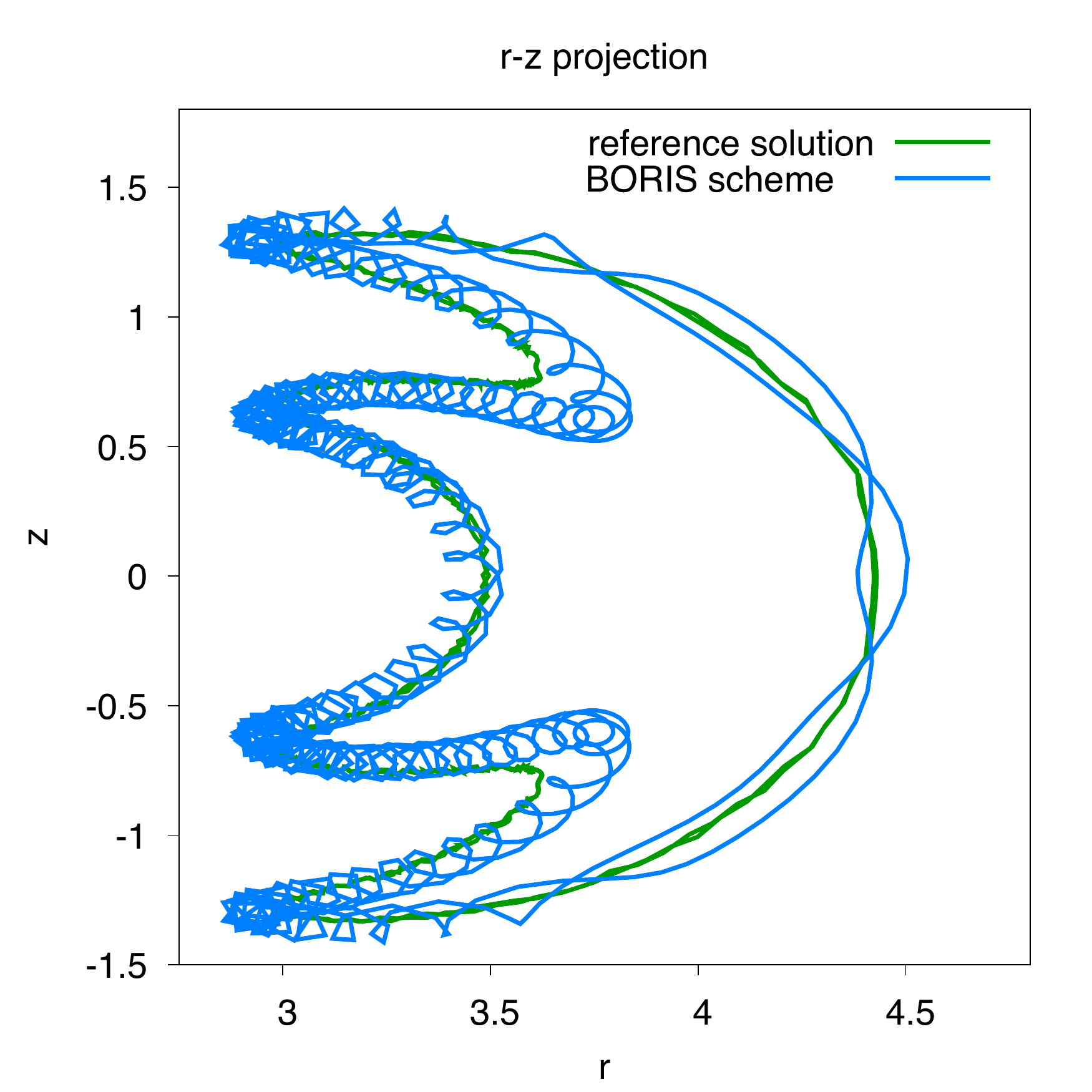}
   \\
(a)  & (b)  
\end{tabular}
\caption{\label{fig:6-2} {\bf Tokamak Equilibrium  $\eps=10^{-2}$.}
  Time evolution of
  $v_\parallel$ and $r$-$z$ projection obtained with (a) the second
  order scheme \eqref{scheme:3-1}-\eqref{scheme:3-2} and  (b) the second order Boris scheme \cite{boris}   with $\Delta t=10^{-2}$.}
 \end{center}
\end{figure}

Finally in Figures \ref{fig:7-1} and \eqref{fig:7-2} we report the
numerical results obtained for $\eps=10^{-3}$ (and $\Delta t=10^{-2}$). On the one hand, in this regime, the Boris scheme is still stable but poorly accurate. It produces some
oscillations on the different quantities as the potential energy and the
adiabatic invariant $\mu$ and the quantity $\vpar$ very rapidly desynchronizes.  On the other hand, the IMEX scheme \eqref{scheme:3-1}-\eqref{scheme:3-2} gives smooth and accurate
results essentially indistinguishable from the reference ones. It illustrates the robustness of our approach in term of stability and accuracy with respect to $\eps\ll 1$. 

\begin{figure}[H]
\begin{center}
\begin{tabular}{cc}
\includegraphics[width=7.75cm]{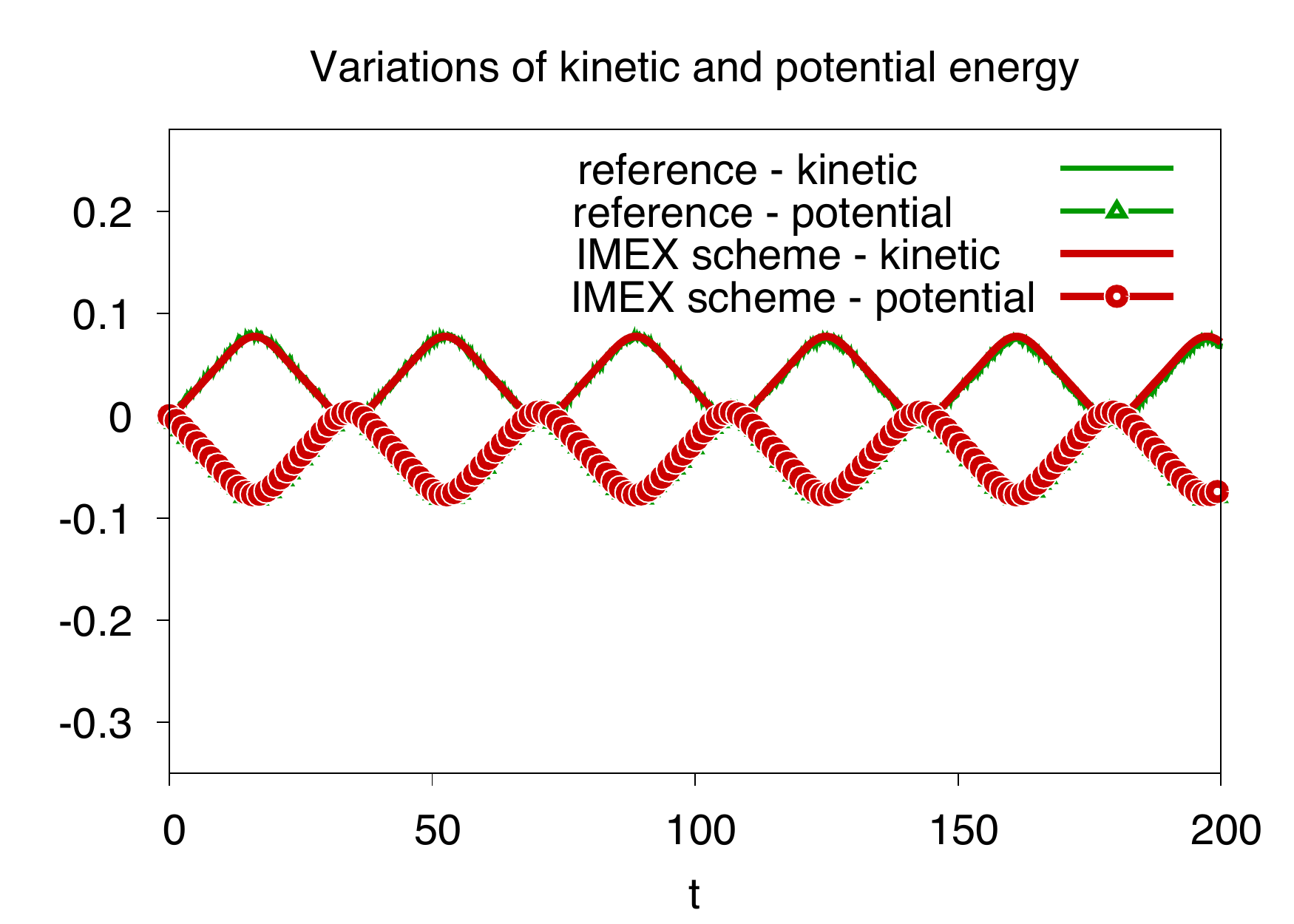} &    
\includegraphics[width=7.75cm]{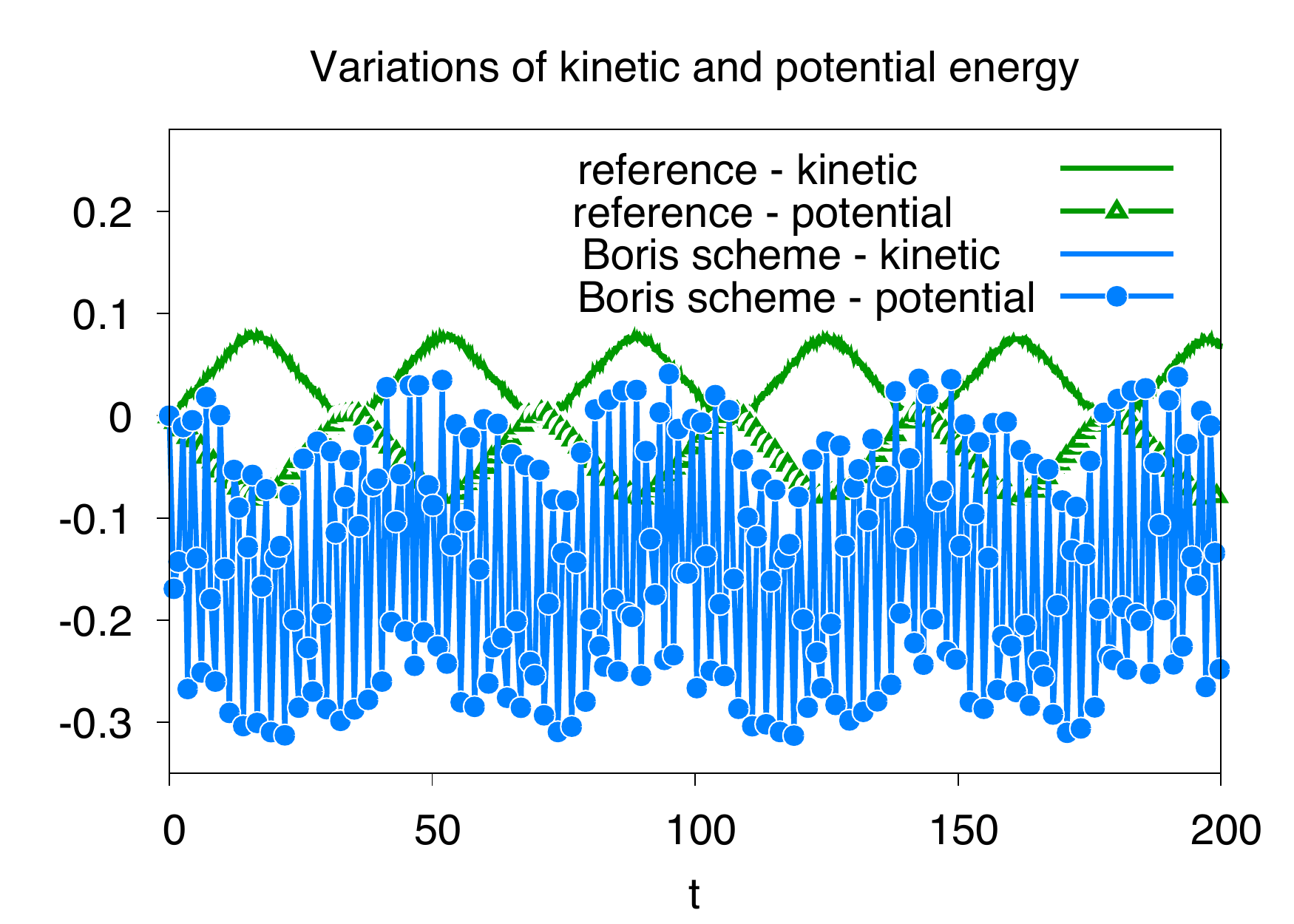}
   \\
\includegraphics[width=7.75cm]{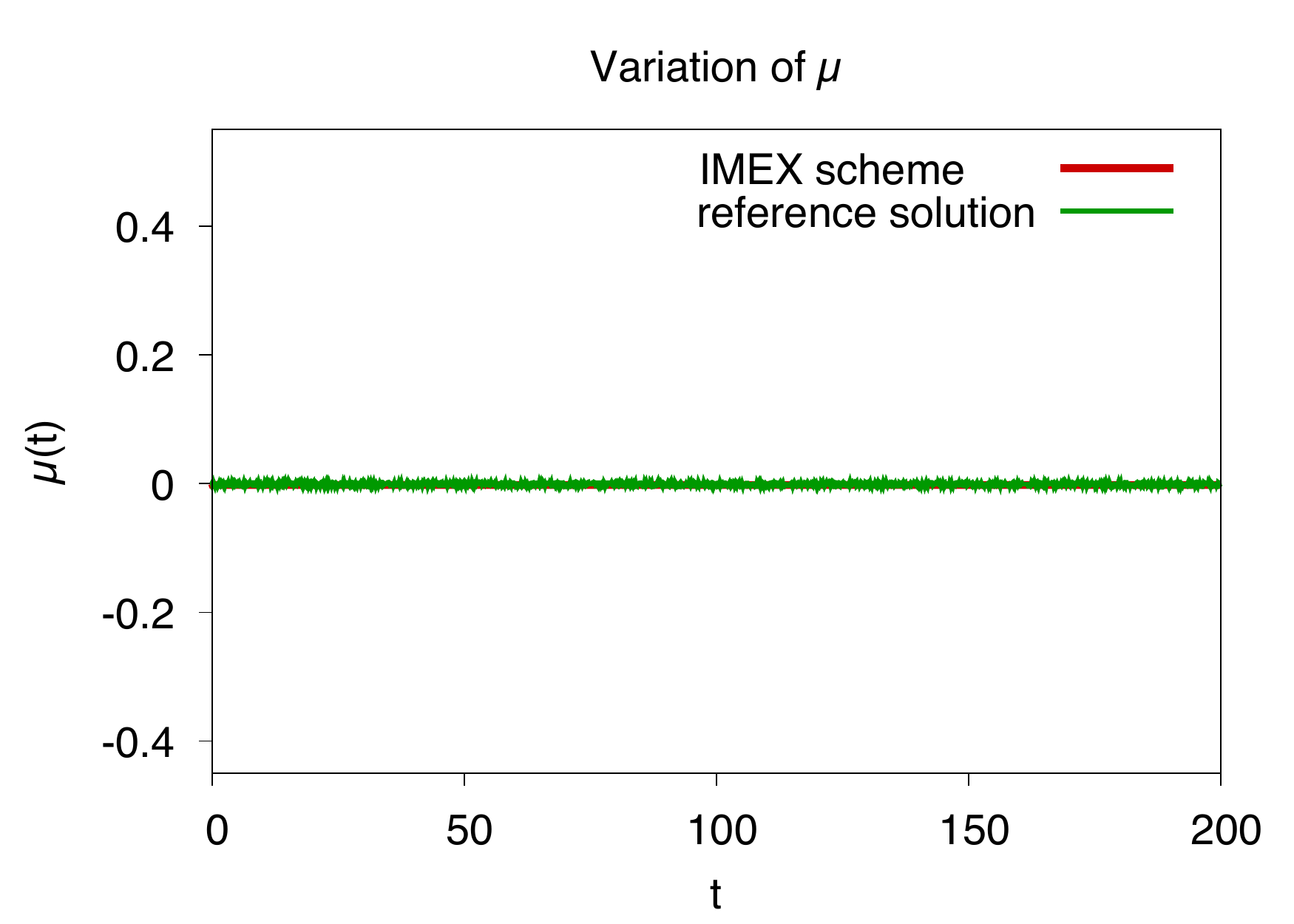}&
\includegraphics[width=7.75cm]{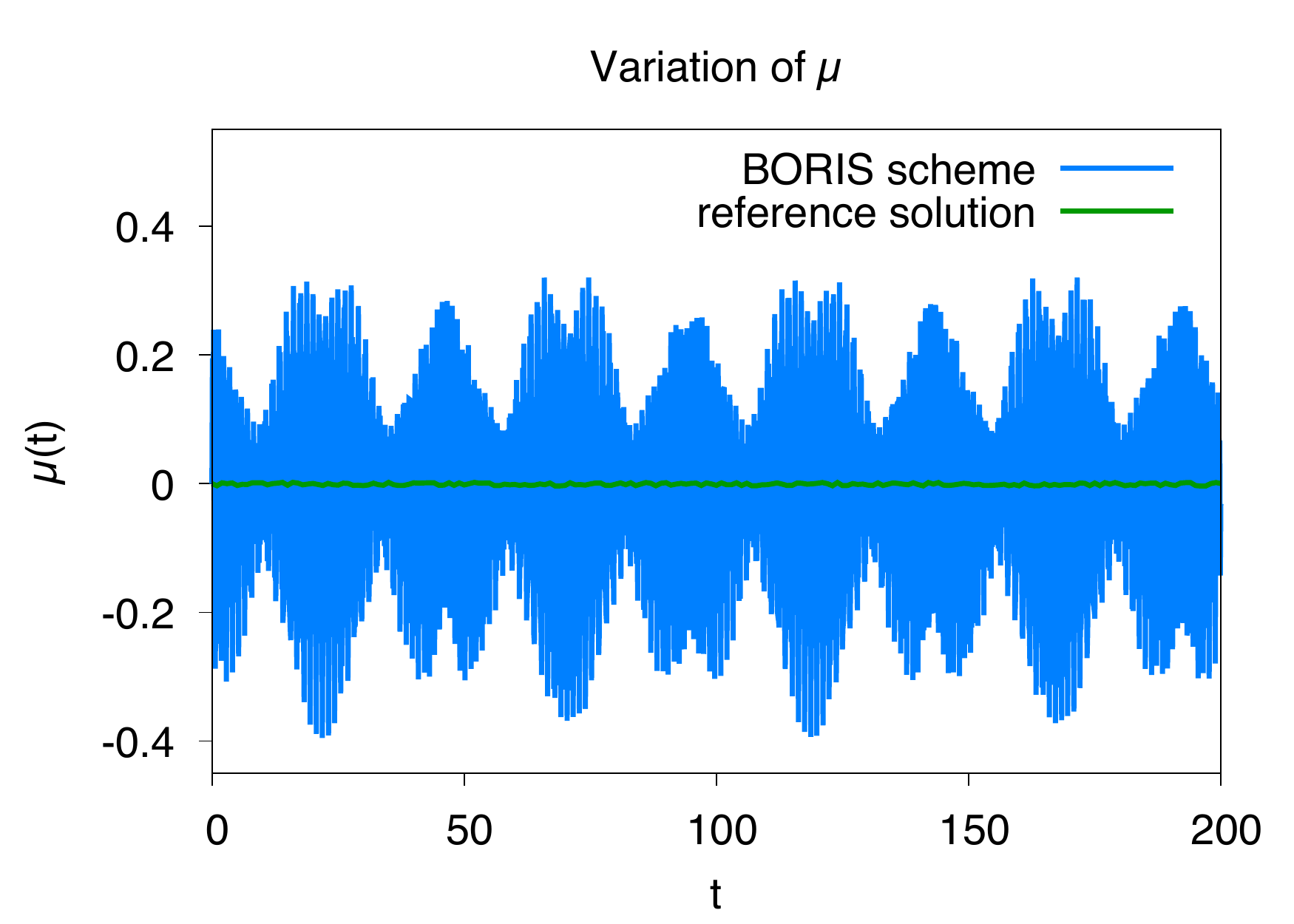}
   \\
(a)  & (b)  
\end{tabular}
\caption{\label{fig:7-1} {\bf Tokamak Equilibrium $\eps=10^{-3}$.}
  Time variation of kinetic \& potential energy and adiabatic
  invariant  $\mu$ obtained with (a) the second
  order scheme \eqref{scheme:3-1}-\eqref{scheme:3-2} and (b) the second order Boris scheme \cite{boris}, with $\Delta t=10^{-2}$.}
 \end{center}
\end{figure}

\begin{figure}[H]
\begin{center}
 \begin{tabular}{cc}
\includegraphics[width=7.75cm]{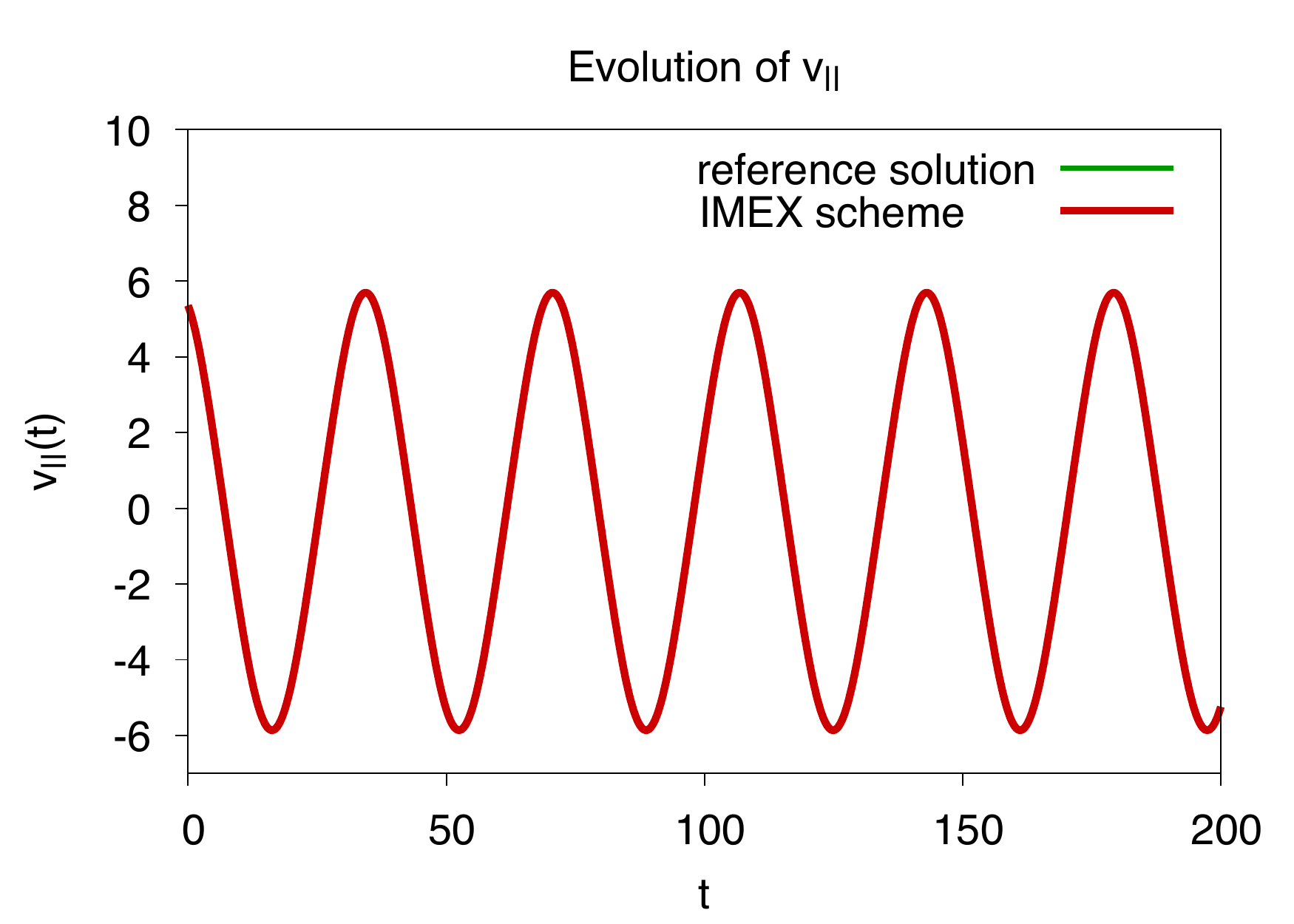} &    
\includegraphics[width=7.75cm]{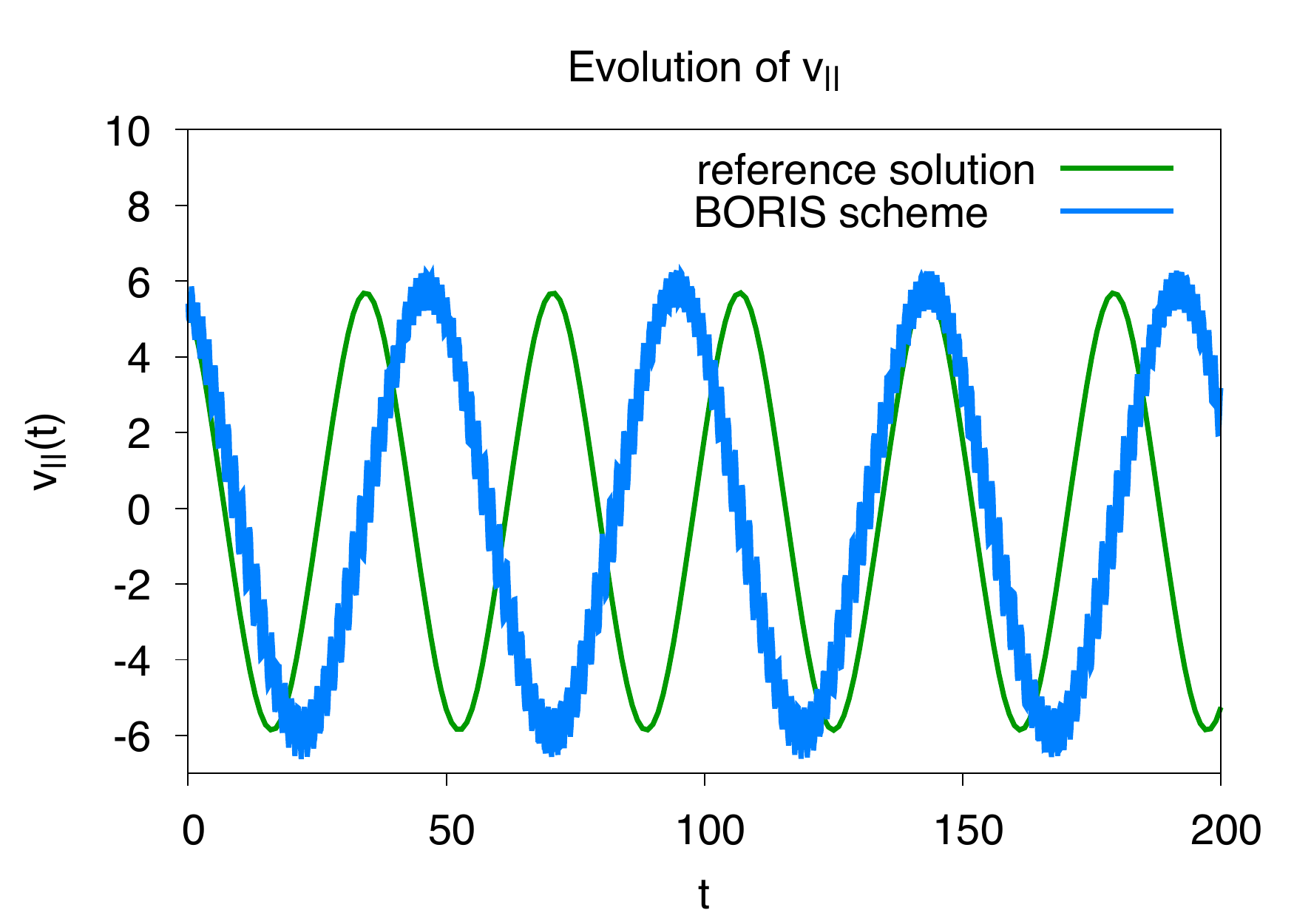}
   \\
\includegraphics[width=7.75cm]{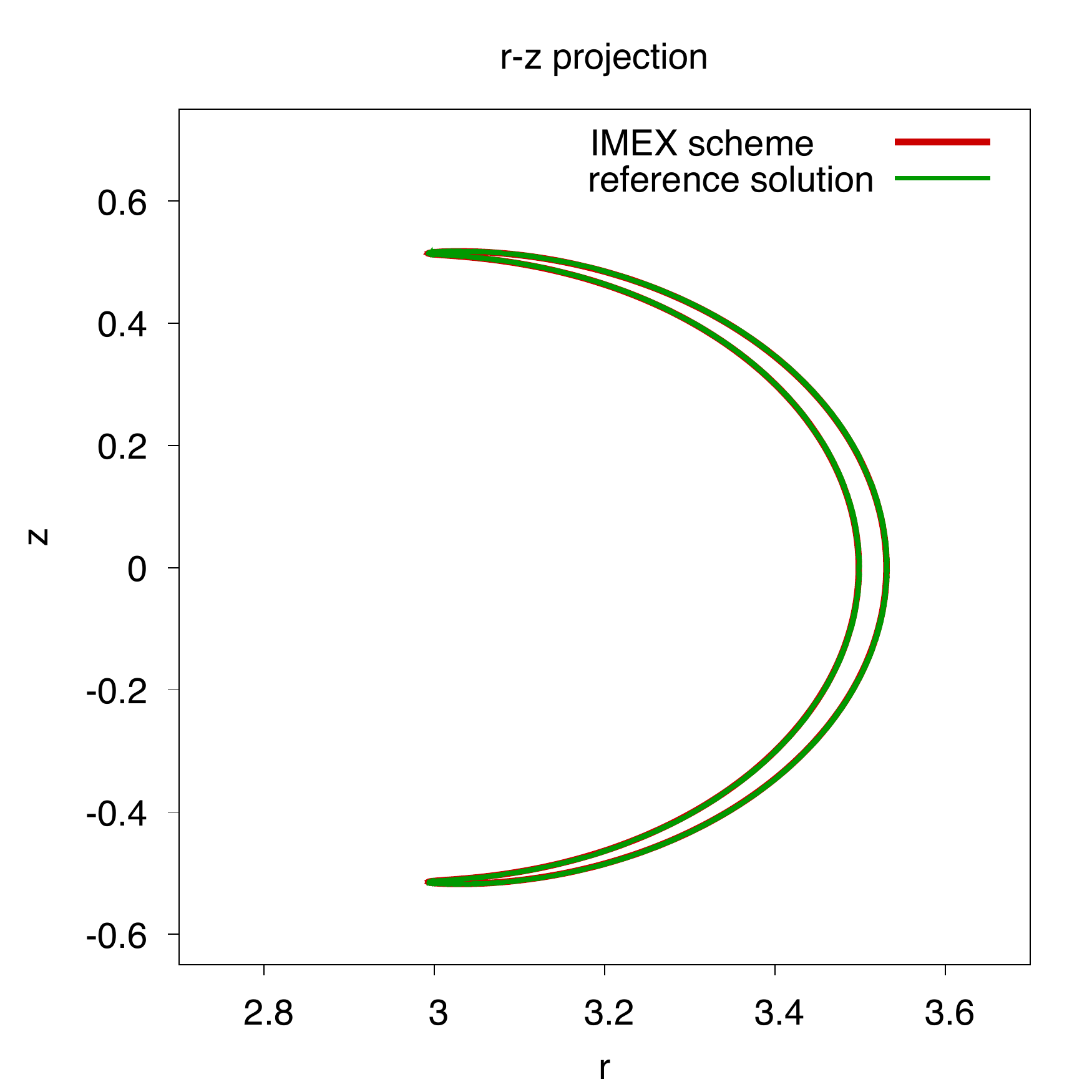}&
\includegraphics[width=7.75cm]{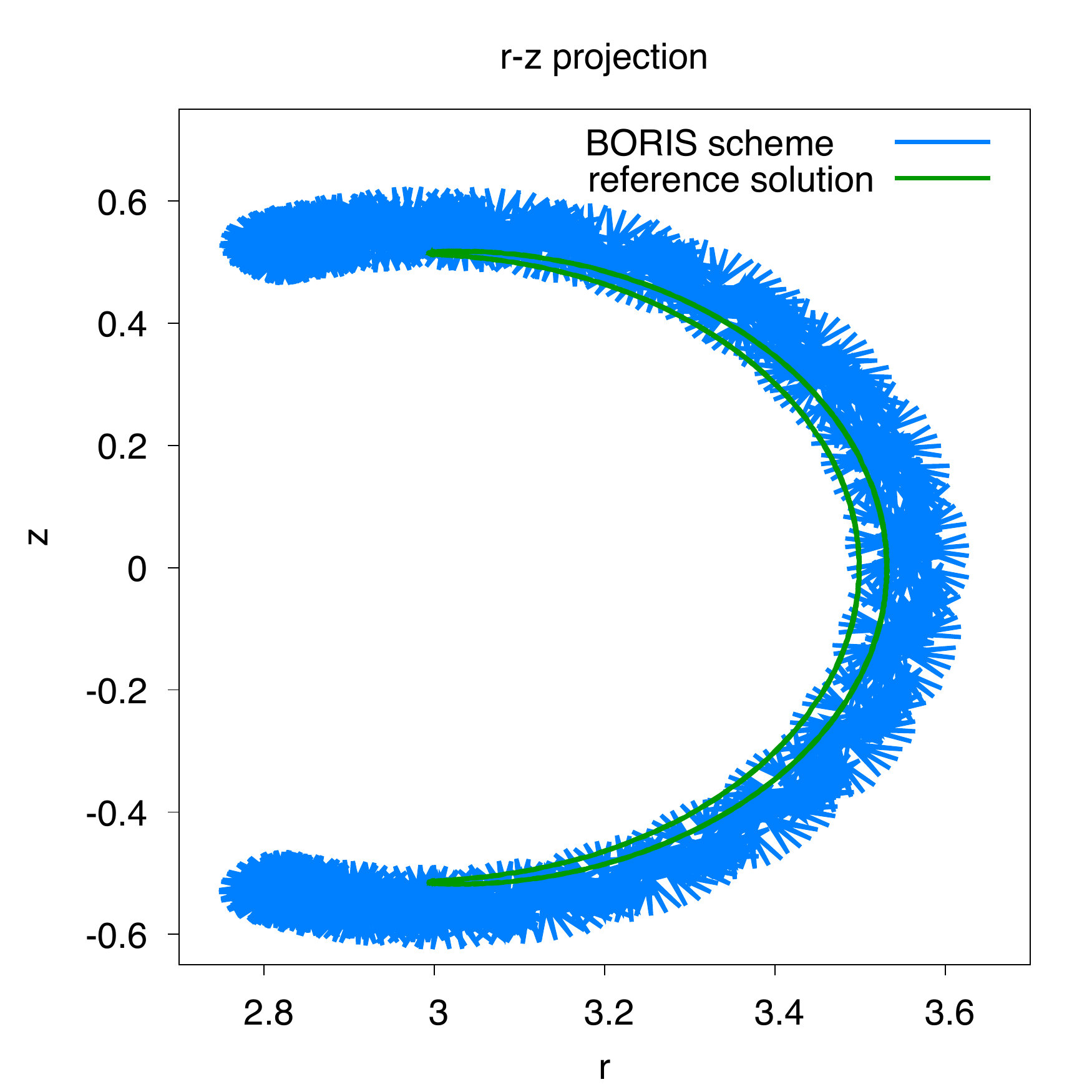}
   \\
(a)  & (b)  
\end{tabular}
\caption{\label{fig:7-2} {\bf Tokamak Equilibrium $\eps=10^{-3}$.}
  Time evolution of 
  $v_\parallel$ and $r$-$z$ projection obtained with (a) the second
  order scheme \eqref{scheme:3-1}-\eqref{scheme:3-2} and (b) the second order Boris scheme \cite{boris}, with $\Delta t=10^{-2}$.}
 \end{center}
\end{figure}

\section{Conclusion and perspectives}
\label{sec:6}
\setcounter{equation}{0}
In the present paper we have proposed a class of semi-implicit time discretization
techniques for particle-in cell simulations in torus configurations, mimicking tokamak fusion devices. The main feature of our approach is to guarantee the accuracy and stability on slow scale variables even when the amplitude of the magnetic field becomes large, thus allowing a capture of their correct long-time behavior including cases with non homogeneous magnetic fields and coarse time grids. Even on large time simulations the obtained numerical schemes also provide an acceptable accuracy on physical invariants (total energy for any $\eps$, adiabatic invariant when $\eps\ll1$) whereas fast scales are automatically filtered when the time step is large compared to $\eps^2$.

As a theoretical validation we have proved that the slow part of the
approximation converges when $\eps\rightarrow 0$ to the solution of a
limiting scheme for the asymptotic evolution, that preserves the
initial order of accuracy. Yet a full proof of uniform accuracy and a
classification of admissible schemes remains to be carried out. 

From a practical point of view, the next natural step would be to consider
the coupling with the Poisson equation for the computation of a self-consistent electric field and the consideration of even more realistic geometries. 

\section*{Acknowledgements}
FF was supported by the EUROfusion Consortium and has received funding
from the Euratom research and training programme 2014-2018 under grant
agreement No 633053. The views and opinions expressed herein do not
necessarily reflect those of the European Commission.

LMR expresses his appreciation of the hospitality of IMT, Universit\'e Toulouse III, during part of the preparation of the present contribution.

\bibliography{my_biblio.bib}
\bibliographystyle{abbrv}

\begin{flushleft}
  \signFF \end{flushleft}
\vspace*{-44mm}
\begin{flushright}
  \signLMR \end{flushright}

\end{document}